\theoremstyle{definition}
\newtheorem{defin}{Definition}[section]
\newtheorem{rem}[defin]{Remark}
\theoremstyle{plane}
\newtheorem{thm}[defin]{Theorem}
\newtheorem{prop}[defin]{Proposition}
\newtheorem{coroll}[defin]{Corollary}
\newtheorem{lemma}[defin]{Lemma}
\newcommand{\mbb}{\mathbb}
\newcommand{\mc}{\mathcal}
\newcommand{\mf}{\mathfrak}
\newcommand{\veps}{\varepsilon}
\newcommand{\what}{\widehat}
\newcommand{\wtilde}{\widetilde}
\newcommand{\vphi}{\varphi}
\newcommand{\ra}{\rightarrow}
\newcommand{\hra}{\hookrightarrow}
\newcommand{\g}{\gamma}
\newcommand{\lan}{\langle}
\newcommand{\ran}{\rangle}
\newcommand{\R}{\mathbb{R}}
\newcommand{\N}{\mathbb{N}}
\newcommand{\Z}{\mathbb{Z}}
\newcommand{\T}{\mathbb{T}}
\renewcommand{\div}{{\rm div}\,}
\newcommand{\Id}{{\rm Id}\,}
\def\d{\partial}
\def\div{{\rm div}\,}
\title{\textsc{\Large{\textbf{Geophysical flows and \\ the effects of a strong surface tension}}}}
\author{\normalsize\textsl{Francesco Fanelli} \vspace{.3cm} \\
{\small \textit{Institut Camille Jordan - UMR CNRS 5208}} \vspace{.1cm} \\
{\small \textsc{Universit\'e de Lyon, Universit\'e Claude Bernard Lyon 1}} \vspace{.1cm} \\
{\scriptsize {B\^atiment Braconnier}} \\
{\scriptsize {43, Boulevard du 11 novembre 1918}} \\
{\scriptsize {F-69622 Villeurbanne cedex -- FRANCE}} \vspace{.2cm} \\
{\small \ttfamily{fanelli@math.univ-lyon1.fr}} }
\date\today
\begin{document}

\maketitle

\subsubsection*{Abstract}
{\small In the present note we review some recent results for a class of singular perturbation problems for a Navier-Stokes-Korteweg system with Coriolis force.
More precisely, we study the asymptotic behaviour of solutions when taking incompressible and fast rotation limits simultaneously, in a constant capillarity regime.

Our main purpose here is to explain in detail the description of the phenomena we want to capture, and the mathematical derivation of the system of equations.
Hence, a huge part of this work is devoted to physical considerations and mathematical modeling.}

\paragraph*{2010 Mathematics Subject Classification:}{\small 35B25 
(primary);
35Q35, 
35B40, 
35Q86, 
76D45, 
76M45, 
76U05, 
82C24 
(secondary).}

\paragraph*{Keywords:}{\small Capillarity; Earth rotation; Navier-Stokes-Korteweg system; singular perturbation problem;  low Mach, Rossby and Weber numbers;
variable rotation axis; Zygmund regularity.}


\section{Introduction} \label{s:intro}

In the present note, we consider a mathematical model for compressible viscous fluids whose dynamics is mainly influenced by two
effects: strong surface tension and fast rotation of the ambient reference system (the Earth for us, but it is also the case of e.g. other planets or stars).

We intend to review some recent results for a class of singular perturbation problems for our system. More precisely, we study the asymptotic
behaviour of solutions when taking incompressible and fast rotation limits simultaneously, in a constant capillarity regime.
The analysis we present has mostly been performed in \cite{F_2015}-\cite{F_2016}: we refer to these works for more details and further references.
The main purpose of this work is rather to explain in detail the physical description of the phenomena we want to capture, and their translation at the mathematical level.

Hence, in a first moment (see Section \ref{s:model}) we derive the mathematical model we want to study, namely a Navier-Stokes-Korteweg system with degenerate viscosity coefficient
and Coriolis term.

It is a compressible Navier-Stokes type system, with an additional term due to capillarity and which depends on higher order space derivatives of the density. 
The form of the so-called Korteweg stress tensor we consider here was firsty introduced by Dunn and Serrin in \cite{D-S} (see also Subsection \ref{ss:i_cap} below
for further details); it is nothing but a choice among all the possible ones: different forms lead to different models, which are relevant in e.g. quantum hydrodynamics or shallow water theory.

The reason of considering a density-dependent viscosity, which vanishes in vacuum regions, is twofold.
The first motivation comes from modeling purposes: indeed, in general the viscosity of a fluid depends both on density and temperature, even in the Newtonian case (see e.g. \cite{Chap-Cow}).  
The second one is purely mathematical: although this choice complicates the analysis, since no information can be deduced
on the velocity field $u$ when the density $\rho$ vanishes, it enables us to exploit a fundamental underlying structure of the equations, the so-called \emph{BD entropy structure},
which in turn allows to gain additional regularity for $\rho$ from the capillarity term. We refer to Subsection \ref{ss:cap-effects} below for more details.

Finally, the additional Coriolis term comes from the fact that we are interested in dynamics of geophysical fluids, for which effects due to Earth rotation cannot be neglected.
At the mathematical level, its presence in the equations of motion does not involve any problem as far as one is interested in classical energy estimates, which is usually the case. On the contrary,
here it will cause some troubles in the analysis, due to its (apparent) incompatibility with the BD structure. We will explain better this point in Section \ref{s:math_prop}, where
we turn our attention to the mathematical properties of our system.

In the last part of the paper (see Sections \ref{s:asympt} and \ref{s:proof}), we specialize on the physically relevant case of variable rotation axis,
namely depending both on latitude and longitude.
As said at the beginning, we show a result on asymptotic behaviour of weak solutions, performing together the fast rotation and incompressible limits in the regime of constant capillarity.
Besides, we refine the result of \cite{F_2016} concerning minimal regularity assumptions on the fluctuation of the axis, introducing Zygmund-type conditions on the gradient of
the variation function. We refer to the above mentioned works \cite{F_2015}-\cite{F_2016} for further results, for an overview of related studies and additional references.

Let us complete this brief introduction by a detailed description of the physical effects we want to capture here, namely Earth rotation and surface tension.

\subsection{Earth rotation and the Coriolis force} \label{ss:i_rot}

Coriolis force is an inertial force, which shows up because of the choice of a rotating reference frame for describing a motion.
It causes a travelling object to curve its trajectory: actually this deflection is just apparent, and it is ``seen'' only by an observer in the rotating frame.
Coriolis force depends on the rotation speed of the reference frame and on the speed of the object through their cross product;
in particular, for motions on the Earth it depends on the projection of the rotation axis on the local vertical (i.e. the latitude).

Obviously, Earth is a rotating frame, and we have to keep into account this kind of effects when describing an event: indeed Coriolis force can be experienced in several ways in everyday life.
Its consequences are especially evident in the motion of geophysical fluids, due to the large scales
involved (see also the discussion below). In fact, rotation is one of the two main ingredients which distinguish geophysical flows, the other
one being stratification. 
Quite surprisingly, centrifugal forces can be completely neglected at this level, since it is by far exceeded by gravity, which essentially rubs out its effects (up to small corrections
in the geometry of the planet). We refer to \cite{CR} for further details.

We can justify the relevance of Coriolis force in geophysical fluid dynamics by arguments based on orders of magnitude, and in fact one can speak about \emph{fast rotation}, since the
time of the physical process is much larger than the one taken by the Earth to make a revolution. This will be our approach throughout this work;
we will give more details in Subsection \ref{ss:singular}. For the time being, let us make a brief overview of the main physical phenomena determined by the Earth rotation.

The first remarkable effect is that fast rotation induces a sort of rigidity on the fluid motion, as discovered first by the physicist Taylor. More precisely,
it creates an asymmetry between vertical (i.e. parallel to the rotation axis) and horizontal
(i.e. in the plane orthogonal to the rotation axis) components of the velocity field, and the fluid particles tend to move on vertical columns.
Of course, such a vertical rigidity (known as \emph{Taylor-Proudman theorem}) is not perfectly realized in oceanic and athmospheric circulation, mainly because the rotation is not fast
enough and the density not uniform enough to hide other ongoing physical processes.

These discrepancies, as well as other deviations from idealized configurations, generate and propagate along \emph{waves}, which can be of various nature: one speaks about Kelvin, Poincar\'e,
Rossby and topographic waves. We refer to \cite{CR} and \cite{Ped} for a more indeep discussion. As far as we are concerned, we will deal with Poincar\'e and Rossby waves:
the former are due to non-vanishing vertical components of the velocity field of the fluid, while the latter are due to ``variations'' of the rotation axis together with latitude
(we will be more precise later on, see in particular Subsection \ref{ss:rot-effects}).
Actually, this terminology does not completely fit the cases we encounter here: indeed, we will consider slightly compressible fluids, which produce then acoustic waves
(as it is well-known, see e.g. \cite{Light}) due to the compressible part of the velocity field. Hence, our analysis will rather reveal the interactions between these
different classes of waves (acoustic and geophysical). Let us remark that, although having variable density, our fluids are not exactly stratified in the common sense
of geophysics, since we neglect here the gravitational force for the sake of simplicity: so, in particular we will not see vertical motions.

The vertical rigidity sancioned by the Taylor-Proudman theorem is somehow incompatible also with the physical condition which prescribes the fluid to be at rest at
the boundary (i.e. Dirichlet boundary conditions). Such an apparent incompatibility is actually explained by the formation of boundary layers, called \emph{Ekman layers}, which
are due to vertical friction. Vertical friction effects are very small if compared with rotation (this assertion can be justified again by scaling arguments),
and they can be omitted in a first approximation, but they cause the motion, roughly speaking, to present two distinct behaviours: in the interior friction is usually negligible,
while near the boundary and across a small distance (which is in fact the boundary layer) it acts to slow down the interior velocity up to zero.
Physical considerations show that this process is not purely horizontal, but it creates a vertical velocity, the so-called \emph{Ekman pumping},
which occurs throughout the depth of the fluid. This vertical component of the velocity produces a global circulation effect from the interior to the Ekman
layer and conversely, which is reponsible for the dissipation of a huge amount of kinetic energy.
As a final remark, we point out that actually two factors (among others) account for substantial corrections to this quite idealized description: turbulence
(geophysical flows have very large Reynolds number) and stratification. We do not detail this point here, for which we rather refer to Chapter 4 of \cite{Ped} and
Chapter 5 of \cite{CR}.

\medbreak
Let us now consider fluids which are influenced by the presence of a large surface tension.

\subsection{Capillarity: some physical insights} \label{ss:i_cap}

Capillarity is a physical effect which is intimately linked with surface tension of a fluid, and indeed these two words (i.e. capillarity and surface tension) are often used as synonims,
at least by mathematicians. This will be also our point of view throughout all this paper.

Capillary phenomena are connected with large variations of the density function in small regions: for instance whenever different fluids touch each other, or when matter presents different states.
In everyday life, capillarity can be commonly experienced observing fluids in thin tubes: this sentence includes a lot of simple events, like drinks filling a straw or
being absorbed by a paper (or other material) towel. Likewise, surface tension is responsible for droplets formation and breakup. Capillarity is suspected to play a fundamental role
even in phenomena linked with superfluids (like e.g. liquid helium or ultracold atomic gases); superfluids appear in astrophysics, high-energy physics and quantum gravity theories.

In a more rigorous way, from the physical viewpoint surface tension plays a fundamental role in describing phenomena which occur at the interface between
two (miscible or immiscible) fluids, see e.g. \cite{A-McF-W}.
In classical fluid mechanics approach, the interface is assumed to be of zero thickness and it is modeled then as an evolving free boundary. This description, although being successfull
in many contexts, turns out to fail in various situations: in particular, it breaks down whenever the physical process takes place at lenght scales which are comparable to
the thickness of the interface. In this case, \emph{diffuse-interface} models provide an alternative description, which is able to overtake problems of that kind. The leading idea of
these models is that quantities which were localized in the interfacial surface before, are now distributed throughout a region of possibly very small but non-zero thickness.
It is exactly in this interfacial region that density experiences steep (but still smooth) changes of value.

Diffuse-interface models are used also in the modern theory of capillarity. Previously (at around the beginning of the XIX century), capillary phenomena were explained as
the result of the interaction between short-ranged attraction forces and ``repulsive forces'' between molecules; these latter were in fact interpreted by Laplace as
a byproduct of an internal intermolecular pressure. Laplace's description evolved through Maxwell and Lord Rayleigh up to van der Waals: although being based on a static
view of matter, this theory is in fact able to explain many physical phenomena in a quite accurate way. We refer to Chapter 1 of \cite{Row-Wid} for many additional details.

From both points of view (the one of short-ranged interactions and the other one of diffuse-interfaces), the relevant feature is that density undergoes large variations in regions of small thickness.
This was already remarked by Korteweg, who proposed a new constitutive law for the stress tensor of the fluid under consideration:
an ``elastic'' component had to be added to the classical form of Cauchy and Poisson, which had to depend on the density and its derivatives up to second order
(which corresponds, in modern terminology, to fluids of grade $2$). Adopting a rational mechanics approach, in \cite{D-S} Dunn and Serrin showed the incompatibility of
the form proposed by Korteweg with the classical principle of thermodynamics, and they proposed a new version of the capillarity stress tensor, which can actually be generalized to
model fluids of any grade $N$.
Finally, it is the Dunn--Serrin form of the Korteweg stress tensor which is used in mathematical studies: see e.g. \cite{B-D-L}, \cite{B-D_2003} and \cite{BG-D-Desc_2007} (this last reference
concerns the case of inviscid fluids); we also refer to \cite{BG-D-Desc-J} for the study of traveling waves and stability issues linked with inviscid capillary flows, and
to \cite{B-C-N-V} for an interesting reformulation of the Korteweg tensor and related properties.

In Subsection \ref{ss:korteweg} we will derive the Dunn--Serrin form of the Korteweg tensor by use of variational arguments, in the special case of fluids of grade $2$.
We refer to the introductions of \cite{F_2015}-\cite{F_2016} for further references and mathematical results concerning related capillary models.

\subsection{Relevance of mixing rotation and capillarity}

We conclude the introduction by justifying our study, where we look at both rotation and surface tension effects in the motion.

First of all, we remark that the diffuse-interface approach is relevant also for single component fluids with variable density. Therefore, it is pertinent
to consider a stress tensor of Korteweg type in the context of geophysical flows, assuming that internal forces generate a significant part of the internal energy
of the fluid.

Notice also that interface mechanisms play a relevant role in propagation of internal waves; keeping them into account is hence fundamental in describing athmosphere and ocean dynamics
(see Chapter 10 of \cite{CR} about this point). Adding a capillarity term in the equations of motion can be seen as a (maybe rough, but still
interesting, from our point of view) attempt of capturing these phenomena.

Finally, as we will see in detail in Subsection \ref{ss:singular}, our equations present a strong similarity with a $2$-D shallow water system, used to describe the so-called
\emph{betaplane model} for geophysical dynamics close to the equatorial zone (see e.g. \cite{B-D-Met}, \cite{G} and \cite{G-SR_Mem}). Our study can be put in relation with those works:
here we consider a $3$-D domain and a degenerate viscosity coefficient.

\subsubsection*{Acknowledgements}
The study presented in this note has been widely enriched by interesting discussions with \textit{Sylvie Benzoni-Gavage}, \textit{Didier Bresch} and \textit{Isabelle
Gallagher}. The author wants to express all his gratitude to them.

This work was partially supported by the LABEX MILYON (ANR-10-LABX-0070) of Universit\'e de Lyon, within the program ``Investissement d'Avenir''
(ANR-11-IDEX-0007) operated by the French National Research Agency (ANR).


\section{A model for rotating capillary fluids} \label{s:model}

In this section we introduce our model, namely a Navier-Stokes-Korteweg system with Coriolis force.
It presents a term which keeps into account capillarity effects, the so-called Korteweg stress tensor, and another one which is due to the rotation of the Earth.

We start by reviewing some physical background and explaining how these effects can be translated at the mathematical level.
This will lead us to write down the equations of motion in Subsection \ref{ss:motion}. In the final Subsection \ref{ss:singular},
we will adopt the singular perturbation analysis point of view, presenting the rescaled system.

\subsection{On the Korteweg stress tensor} \label{ss:korteweg}

Let us consider first surface tension effects on fluid motion.

As pointed out in the introduction, historically capillarity phenomena were described as a consequence of short-ranged (attractive and repulsive) forces which are produced between
molecules. On the other hand, more recent theories are based on a diffuse-interface approach, where however the interfacial region can have a very small thickness.

In both contexts, it is then quite natural to assume that the classical stress tensor of the fluid under consideration has to be modified, in order to include
terms which depend on the gradient and higher order derivatives of the density. Here, however, we want to take a different point of view: namely, we are going to derive its form (one of the
possible choices) by variational principles, postulating just the specific expression of the free energy. We refer to \cite{A-McF-W} and \cite{BG_Levico} for more details.

For simplicity, let us consider a non-uniform single-component fluid at equilibrium, and let us denote by $\rho$ its density and by $\theta$ its temperature.
We suppose that the \emph{Helmholtz free energy} functional takes the form
\begin{equation} \label{eq:Helmholtz}
\mc{E}[\rho,\theta]\,:=\,\int_\Omega\left(\rho\,e_0(\rho,\theta)\,+\,\frac{1}{2}\,k(\rho,\theta)\,|\nabla\rho|^2\right)\,dx
\end{equation}
in some domain $\Omega\subset\R^3$, where $dx$ is the volume measure.
Of course, for a fluid not necessarily at rest, the contribution coming from the kinetic energy
$$
\mc{E}_k[\rho,u]\,:=\,\frac{1}{2}\,\int_\Omega\rho\,|u|^2\,dx
$$
has to be added to $\mc{E}$, where $u\in\R^3$ represents the velocity field of the fluid.

In formula \eqref{eq:Helmholtz}, the function $e_0$ is the bulk free energy density per unit mass, and the first term in the integral, i.e.
$$
\mc{E}_0[\rho,\theta]\,:=\,\int_\Omega\mc L_0[\rho,\theta]\,dx\,,\qquad\qquad\mbox{ with }\qquad \mc L_0[\rho,\theta]\,:=\,\rho\,e_0(\rho,\theta)\,,
$$
represents the ``classical'' energy of a fluid at rest. The second term in the integral takes into account the presence of capillarity effects in the fluid,
in which large density variations are no more negligible and contribute to free energy excess of the interfacial region. The positive function $k(\rho,\theta)$ is the internal
energy coefficient, also called capillarity coefficient; it is possible to give it a more precise meaning in the context of statistical mechanics,
but this point goes beyond the scopes of this discussion, and we will not say more about it here.

From now on, we restrict our attention to the case of \emph{isothermal fluids}. So let us set (with no loss of generality) $\theta\equiv1$ and forget about it in the notations.

The equilibrium conditions are obtained by minimizing the function $\mc{E}$ under the constraint of constant mass $\mc M\,:=\,\int_\Omega\rho\,dx$.
Therefore, defined the Lagrangian function
\begin{equation} \label{eq:E-L}
\mc{L}[\rho,\nabla\rho]\,:=\,\rho\,e_0(\rho)\,+\,\frac{1}{2}\,k(\rho)\,|\nabla\rho|^2\,-\,\lambda\,\rho
\end{equation}
(where $\lambda$ is a Lagrange multiplier), by Hamilton's principle one finds the Euler-Lagrange equation
$$
\frac{1}{2}\,k'(\rho)\,|\nabla\rho|^2\,+\,k(\rho)\,\Delta\rho\,-\,\d_\rho\bigl(\rho\,e_0(\rho)\bigr)\,+\,\lambda\,=\,0\,.
$$

On the other hand, both $\mc L_1[\rho,\nabla\rho]\,:=\,\mc{L}_0[\rho]\,+\,k(\rho)\,|\nabla\rho|^2/2$ and the mass constraint are independent of spatial coordinates,
and therefore they are invariant by the action of the vector fields $\d_j$, for all $1\leq j\leq 3$. Then, by application of Noether's theorem we get the relation
$\div J\,=\,0$, where we have defined the $2$-tensor
$$
J\,:=\,\mc{L}\,\Id\,-\,\nabla\rho\,\otimes\,\frac{\d\mc L}{\d(\nabla\rho)}\,.
$$
Replacing $\mc L$ by its definition and using \eqref{eq:E-L} to find $\lambda$, we arrive at the expression
\begin{equation} \label{eq:static-stress}
J\,=\,-\,k(\rho)\,\nabla\rho\otimes\nabla\rho\,+\,\left(-\,\rho^2\,e_0'(\rho)\,+\,\frac{1}{2}\bigl(k(\rho)\,+\,\rho\,k'(\rho)\bigr)\,|\nabla\rho|^2\,+\,\rho\,k(\rho)\,\Delta\rho\right)\Id\,.
\end{equation}

We remark that $\Pi\,:=\,\rho^2\,e_0'(\rho)$ is the standard thermodynamic pressure. It is classically given by $\Pi\,=\,\rho\,\d_\rho\mc L_0\,-\,\mc L_0$; this formula allows us to
define a \emph{generalized pressure} for capillary fluids as
$$
\Pi_g\,:=\,\rho\,\d_\rho\mc L\,-\,\mc L\,=\,\Pi\,+\,\frac{1}{2}\,\bigl(\rho\,k'(\rho)\,-\,k(\rho)\bigr)\,|\nabla\rho|^2\,.
$$
Then, with this definition, $J$ assumes the form
$$ 
J[\rho,\nabla\rho]\,=\,-\,k(\rho)\,\nabla\rho\otimes\nabla\rho\,+\,\Bigl(-\,\Pi_g\,+\,k(\rho)\,\div\bigl(\rho\,\nabla\rho\bigr)\Bigr)\,\Id\,.
$$ 
Let us mention that $J$, given by formula \eqref{eq:static-stress} and derived here in the static case, actually represents
the reversible part of the stress tensor for a capillary fluid which is not at equilibrium (of course, viscosity is missing in the present discussion).
We refer to \cite{A-McF-W} for additional details about this point, which will be used in Subsection \ref{ss:motion} to write the equations of motion.

Finally, for later use, we define the \emph{Korteweg stress tensor} as
\begin{equation} \label{eq:K-stress}
K[\rho,\nabla\rho]\,:=\,-\,k(\rho)\,\nabla\rho\otimes\nabla\rho\,+\,\frac{1}{2}\Bigl(\bigl(k(\rho)\,+\,\rho\,k'(\rho)\bigr)\,|\nabla\rho|^2\,+\,\rho\,k(\rho)\,\Delta\rho\Bigr)\,\Id\,.
\end{equation}

\begin{rem} \label{r:k}
Of course, different forms of $K[\rho,\nabla\rho]$ are possible, depending on the form of the interfacial energy $\mc E\,-\,\mc E_0$. This leads to several models, which
are relevant in various contexts, like e.g. in quantum hydrodynamics or in shallow water theory.
We refer to \cite{BG_Levico} and to the introductions of \cite{F_2015}-\cite{F_2016} for more details about this point.
\end{rem}

\begin{rem} \label{r:phase-b}
The previous form of the total energy \eqref{eq:Helmholtz} has special importance when $\mc L_0$ is \emph{not} a convex function of $\rho$, like
e.g. for van der Waals equations of state. In these cases, $\mc E$ may admit minimizers which are not necessarily constant density profiles and for which, in particular,
two different phases are permitted to coexist. This allows one to capture phase boundaries and their dynamical evolution (see also Section 1 of \cite{BG_Levico} for
a more indeep discussion).
\end{rem}

\subsection{Adding the rotation of the Earth} \label{ss:rotation}

We now turn our attention to Earth rotation and its mathematical description.

Given two frames $\mc F$ and $\mc F'$ in non-intertial relative motion, let us call $x(t)$ the position of a moving body with respect to the first origin
(say, the ``absolute'' position) and $x'(t)$ the position with respect to the second one (say, the position in the ``non-inertial'' frame). Denote by $o(t)$ the vector position of the origin
of $\mc F'$ with respect to the origin of $\mc F$, 
and suppose also that $\mc F'$ is rotating with respect to $\mc F$, so that
$$
x(t)\,=\,o(t)\,+\,R(t)\,x'(t)\,,
$$
for some rotation matrix $R(t)$. We set $v(t)$ and $v'(t)$ to be the velocities in $\mc F$ and $\mc F'$ respectively, and $a(t)$ and
$a'(t)$ to be the accelerations: it is a standard matter (see e.g. \cite{Arnold} or \cite{Tartar}) to derive, from principles of Newtonian mechanics, the formula
$$
a(t)\,=\,a_o(t)\,+\,R(t)\Bigl(a'(t)\,+\,\xi(t)\times v'(t)\,+\,\dot{\xi}(t)\times x'(t)\Bigr)\,+\,R(t)\,\xi(t)\times\Bigl(v'(t)\,+\,\xi(t)\times x'(t)\Bigr)\,,
$$
where we have denoted by $a_o$ the relative acceleration of $\mc F'$ with respect to $\mc F$, by $\xi(t)$ the vector associated to the
skew-symmetric matrix $\,^tR(t)\,\dot{R}(t)$ (since we are in $\R^3$), and by a dot the time derivative.

The term $\xi\times(\xi\times v')$is related, together with the relative acceleration $a_0$, to the centrifugal force. It can be easily seen that it derives from a potential,
which slightly changes the gravitational potential creating the so-called geopotential. As we have said in Subsection \ref{ss:i_rot}, this term can be safely neglected
in a first approximation. The term $\dot{\xi}\times x'$ can be ingnored as well at this stage, since we suppose no variations in time of the Earth rotation. 
Finally, the term $2\,\xi\times v'$ is the \emph{Coriolis acceleration}; notice that, being orthogonal to the velocity, it makes no work and so
it is not seen in the energy balance (also, energy being an invariant, its form does not depend on the choice of the reference frame).

To sum up, after the previous approximations, the absolute acceleration reduces to the formula
\begin{equation} \label{eq:accel}
a(t)\,\simeq\,R(t)\Bigl(a'(t)\,+\,2\,\xi(t)\times v'(t)\Bigr)\,.
\end{equation}

Let us now place on the surface of the Earth, which we approximate to a perfect sphere (we neglect again effects due to centrifugal force). We also assume that it rotates
around its north pole-south pole axis: at any given latitude $\vphi$, this direction departs from the local vertical of the angle $\pi/2-\vphi$, and the Coriolis
force consequently varies. More precisely, let us fix a local chart on the Earth surface, such that the $x^1$-axis describes the longitude, oriented eastward, the $x^2$-axis describes the latitude,
oriented northward, and the $x^3$-axis describes the altitude, oriented upward; let us denote by $(e^1,e^2,e^3)$ the corresponding orthonormal basis.
Then, the Earth's rotation vector can be written in the following way:
$$
\xi\,=\,|\xi|\,\bigl(\cos\vphi\,e^2\,+\,\sin\vphi\,e^3\bigr)\,. 
$$

We are now going to make several assumptions.
First of all, for the sake of simplicity we neglect the curvature of the Earth, so that spherical coordinates are treated as cartesian coordinates. This approximation
is justified if we restrict our attention on regions of our planet which are not too extended with respect to the radius of the Earth (see also \cite{SR}).
In addition, we suppose that the rotation axis is parallel to the $x^3$-axis (i.e. we take $\vphi=\pi/2$), which is quite reasonable if we place far enough from the equatorial zone.
Nonetheless, in order to still keep track of the variations due to the latitude, we postulate the presence of a suitable smooth function $\mf{c}$ of the ``horizontal'' variables
$x^h\,=\,(x^1,x^2)$. 
Therefore, under the previous assumptions we can write
$$
\xi\,=\,\mf{c}(x^h)\,e^3\,. 
$$

In the light of the previous discussion, the equations of motion will be perturbed by adding one term (see the next subsection), i.e. the Coriolis operator
\begin{equation} \label{eq:coriolis}
\mf{C}(\rho,u)\,:=\,\mf{c}(x^h)\,e^3\,\times\,\rho\,u\,,
\end{equation}
where, as in the previous subsection, $\rho$ is the density of the fluid
and $u$ its velocity field. We remark that, since Coriolis force makes no work, this term does not contribute to the energy balance, neither to the pontential part $\mc E$ nor to
the kinetic part $\mc E_k$ (both defined in the previous subsection).

\begin{rem} \label{r:coriolis}
Notice that, in view of our assumptions, we have arrived at a quite simple form of the Coriolis operator. This form is nonetheless widely accepted in the mathematical community,
and it already enables one to capture different physical effects linked with rotation (see e.g. book \cite{C-D-G-G} and the references therein, and works \cite{G-SR_2006} and \cite{G-SR_Mem},
among many others).
Treating the most general form of the Coriolis operator goes beyond the scopes of our presentation.
\end{rem}

Precise hypotheses on the variation function $\mf c$ will be described in Subsections \ref{ss:hypotheses} and \ref{ss:var-result} below.
There, we will introduce also further simplifications to the model, in order to tackle the mathematical problem. 

For the time being, let us introduce the relevant system of equations.

\subsection{Writing down the equations of motion} \label{ss:motion}

In the present subsection we finally present the system of PDEs which describes the physical phenomena we detailed above.
Some preliminary simplifications are in order, to deal with an accessible mathematical problem.

\subsubsection{Some assumptions} \label{sss:mot_assump}
As already said, we omit here effects due to centrifugal and gravitational forces. We will also introduce (see Subsection \ref{ss:var-result}) some restrictions on the variations of
the rotation axis, namely on the function $\mf c$ of formula \eqref{eq:coriolis}.

Moreover, we do not consider other relevant physical quantities: e.g. temperature variations, salinity (in the case of oceans), wind force (on the surface of the ocean).
Therefore, our fluid will be described by its density $\rho\geq0$ and its velocity field $u\in\R^3$. Moreover, we will restrict our attention to the case of
\emph{Newtonian fluids}, for which the viscous stress tensor is linearly dependent on the gradient of the velocity field: we suppose it to be given by the relation
$$
S[\rho,u]\,=\,\nu_1(\rho)\,Du\,+\,\nu_2(\rho)\,\div u\,\Id\,,\qquad\qquad\mbox{ with }\qquad Du\,=\,\nabla u\,+\,^t\nabla u\,,
$$
for suitable functions $\nu_1$ and $\nu_2$ of the density only. As already pointed out, considering density--dependent viscous coefficients is very important
at the level of mathematical modeling (but again, we are missing effects linked with temperature variations).
Here and until the end of the paper, we make the simple choice $\nu_1(\rho)\,=\,\nu\,\rho$ and $\nu_2(\rho)\equiv0$, for some constant $\nu>0$.

Finally, let us immediately fix the spacial domain: neglecting the curvature of the Earth surface for simplicity (as remarked in the previous subsection),
we consider evolutions on the infinite slab
$$
\Omega\,:=\,\R^2\,\times\;]0,1[
$$
(but the main result, i.e. Theorem \ref{t:sing_var} below, works also on $\T^2\,\times]0,1[\;$). Hence, we are going to consider a very simple geometry of the domain, which in particular
introduce some rigidity at the boundary: we ignore variations at the surface of the Earth for atmospheric circulation, and for ocean in its topografy and at the free surface
(what is called the \emph{rigid lid approximation}).

Let us reveal in advance that, for the sake of simplicity, we want to avoid boundary layers formation in the present study, and rather focus on the other physical effects
linked with fast rotation (recall the discussion in Subsection \ref{ss:i_rot}). Therefore, in dealing with the singular perturbation problem, we will take \emph{complete slip}
boundary conditions for $\Omega$: we refer to Subsection \ref{ss:hypotheses} for the precise assumptions.

\subsubsection{The system of equations} \label{sss:eq}
Thanks to these hypotheses, we can now write down the equations describing the dynamics of our geophysical flow. We do not present here the details of the precise derivation, which can
be found in many textbooks: see e.g. \cite{Feir}, \cite{F-N} and \cite{Gill}.

The first relation is deduced from the principle of mass conservation, namely $\mc M\,\equiv\,cst$, where $\mc M$ has been introduced in Subsection \ref{ss:korteweg}.
Differentiating it with respect to time and using the divergence theorem, one gets the equation
\begin{equation} \label{eq:mass}
\d_t\rho\,+\,\div\bigl(\rho\,u\bigr)\,=\,0\,.
\end{equation}

Gathering conservation of momentum is just a matter of applying Newton's second law $F\,=\,m\,a$, with the acceleration $a$ which is given
by \eqref{eq:accel} and with the mass $m$ replaced by the density (mass per unit of volume) $\rho$. Passing in Eulerian coordinates, so that $d/dt\,=\,\d_t\,+\,u\cdot\nabla$,
and denoting by $\Psi$ the stress tensor\footnote{We recall that the \emph{stress tensor} yields the force per unit of surface which the part of the fluid in contact with an ideal surface element
imposes on the other part of the fluid on the other side of the same surface element.}, we find
$$
\rho\,\bigl(\d_tu\,+\,u\cdot\nabla u\bigr)\,+\,\mf{C}(\rho,u)\,=\,\div\Psi\,,
$$
where we have also made use of \eqref{eq:coriolis}. By previous assumptions, in our case $\Psi$ is given by a more general form of the Stoke law, namely
$$ 
\Psi\,=\,S[\rho,u]\,+\,J[\rho,\nabla\rho]\,=\,S[\rho,u]\,-\,\Pi(\rho)\,\Id\,+\,K[\rho,\nabla\rho]\,, 
$$ 
where $J$ and $K$ have been defined in \eqref{eq:static-stress} and \eqref{eq:K-stress} respectively. Recall that $\Pi$ is the thermodynamical pressure of the fluid:
we will specify its precise form in Subsection \ref{ss:hypotheses} below.

At this point, we make the choice $k(\rho)\equiv k$ constant in \eqref{eq:K-stress}, which we can take equal to $1$ with no loss of generality.
It is not just a simplification: at the mathematical level, this precise form of the Korteweg stress tensor is exactly the one which combines well with the previous fixed values of the
viscosity coefficients $\nu_1$ and $\nu_2$, in order to exploit the BD structure of our system (recall the discussion in the Introduction, and see Paragraph \ref{sss:BD} below for more details).

Finally, making use of \eqref{eq:mass} and of the specific form of $K[\rho,\nabla\rho]$, it is easy to check that we arrive at the balance law
\begin{equation} \label{eq:momentum}
\d_t\left(\rho u\right)\,+\,\div\bigl(\rho u\otimes u\bigr)\,+\,\,\nabla\Pi(\rho)\,-\,\nu\,\div\bigl(\rho Du\bigr)\,-\,\rho\nabla\Delta\rho\,+\,\mf{C}(\rho,u)\,=\,0\,,
\end{equation}
which expresses the conservation of momentum.

\begin{rem} \label{r:phase-b_2}
Notice that we can read system \eqref{eq:mass}-\eqref{eq:momentum} as a dynamical description
of propagation of interfaces and phase boundaries. Recall also the discussions in Subsections \ref{ss:i_cap} and \ref{ss:korteweg}, and in particular Remark \ref{r:phase-b}.
\end{rem}

\subsection{The singular perturbation viewpoint} \label{ss:singular}

We have presented above the equations of motion we are interested in, namely \eqref{eq:mass}-\eqref{eq:momentum}.
Here we want to introduce relevant physical adimensional parameters in the equations, namely the Rossby, Mach and Weber numbers. Sending these parameters to $0$ will allow us to perform an
asymptotic analysis of the fast rotation and incompressible limits, in the regime of constant capillarity.

Let us first explain the main motivations for a singular perturbation study.

\subsubsection{Motivations} \label{sss:sing_motiv}

To assess the importance of some process in a particular situation, it is common practice in Physics to introduce and compare dimensional quantities
expressing the orders of magnitude of the variables under consideration, and to ignore the ``small'' ones.
More precisely, performing a rescaling in the equations allows to select dimensionless parameters: sending them to $0$ or to $+\infty$ leads to some simplifications
in the equations and possibly to a better understanding of the main phenomena which take place. The limit process
means that we are overlooking some negligible features of the experience, or focusing on some special ones.

Scale analysis is an efficient tool exploited both theoretically and in numerical experiments: it allows to reduce the complexity of the physical system under consideration, selecting just
relevant quantities, or the computational cost of simulations. In addition, it reveals to be very fruitful in real world applications.
This approach is expecially relevant for large-scale processes, like geophysical flows we want to consider here. 
Actually most, if not all mathematical models used in fluid mechanics (like e.g. incompressible Navier-Stokes equations) rely on an asymptotic analysis of
more complicated systems.

Nonetheless, a huge part of the available literature on scale analysis is based on formal asymptotic limits of (supposed to exist) solutions with respect to one or more singular parameters.
The main goal of the mathematical approach to singular perturbation problems is to provide a rigorous justification of the limit model employed in the physical approximation, and possibly to
capture corrections (the equations being non-linear in general, amplification of small quantities might occur) or further underlying effects (like e.g. the way the
quantities which are negligible in a certain regime are filtered off in the asymptotic limit).

\subsubsection{The Rossby, Mach and Weber numbers} \label{sss:sing_numb}

In view of the previous discussion, in the present paragraph our aim is to identify the physical parameters which are relevant with respect to the phenomena
we want to capture.

Let us consider rotation first. To establish its importance, a \textsl{na\"if} approach may be to compare time scales, and hence the time $\tau$ of one revolution
to the life $T$ of the entire physical process. But it is easy to realize that in geophysical flows also lenght scale can play an important role. Therefore, it is customary to take
rather the ratio between the time of one revolution and the time required to cover a distance $L$ at speed $U$.
More precisely, denoted by $L$ the typical lenght scale, by $U$ the typical velocity scale and by $\Theta$ the rotation rate (namely, $\Theta\,=\,2\pi/\tau$),
we define the \emph{Rossby number} as the ratio
$$
Ro\,:=\,\frac{U}{\Theta\,L}\,.
$$
It compares advection to Corilolis force: rotation plays an important role whenever $Ro\,\lesssim\,1$ (see also Sections 1.5 and 3.6 of \cite{CR}).

A common simplification in geophysical fluid dynamics (see e.g. \cite{CR} and \cite{Ped}) is the Boussinesq approximation, which states the incompressibility of the velocity
field. Here, we generlize this assumption (as it is quite usual, see e.g. \cite{F-G-N}-\cite{F-G-GV-N}) by considering slightly compressible fluids.
The compressibility is measured by the so-called \emph{Mach number}, defined as
$$
Ma\,:=\,\frac{U}{c}\,,
$$
where $c$ denotes the speed of sound in the medium: more $Ma$ is small, more the fluid behaves as incompressible. This number will appear in front of the pressure term.
We refer also to \cite{F-N} for further details.

Finally, to evaluate the importance of capillarity, we compare typical inertial forces to stabilizing molecular cohesive forces, or (which is the same) kinetic energy to surface
tension energy. Hence, we introduce the \emph{Weber number} as
$$
We\,:=\,\frac{\rho_0\,U^2\,L}{\sigma}\,,
$$
where we set $\rho_0$ to be the reference density of the fluid and $\sigma$ the typical surface tension. Having $We$ small means that capillarity forces are relevant
in the ongoing physical process.

\subsubsection{Introducing the scaling in the equations} \label{sss:sing_eq}

We now introduce the previous parameters in the mathematical model, getting a rescaled system.
The mass equation \eqref{eq:mass} is not modified by this scale analysis: the Rossby, Mach and Weber numbers come into play in the momentum balance only.
More precisely, equation \eqref{eq:momentum} becomes
$$
\d_t\left(\rho u\right)\,+\,\div\bigl(\rho u\otimes u\bigr)\,+\,
\dfrac{1}{Ma\,^2}\,\nabla\Pi(\rho)\,-\,\nu\,\div\bigl(\rho Du\bigr)\,-\,
\dfrac{1}{We}\,\rho\nabla\Delta\rho\,+\,\dfrac{1}{Ro}\,\mf{C}(\rho,u)\,=\,0\,.
$$

As a final step, we take the point of view of the singular perturbation analysis. Namely, we fix a parameter
$\veps\,\in\,]0,1]$, and we set
$$
Ma\,=\,Ro\,=\,\veps\qquad\qquad \mbox{ and }\qquad\qquad We\,=\,\veps^2\,.
$$
Accordingly, the previous momentum equation becomes
\begin{equation} \label{eq:mom-sing}
\d_t\left(\rho u\right)\,+\,\div\bigl(\rho u\otimes u\bigr)\,+\,\dfrac{1}{\veps^2}\,\nabla\Pi(\rho)\,-\,\nu\,\div\bigl(\rho Du\bigr)\,-\,
\dfrac{1}{\veps^2}\,\rho\nabla\Delta\rho\,+\,\dfrac{1}{\veps}\,\mf{C}(\rho,u)\,=\,0\,.
\end{equation}

For all $\veps$ fixed, thanks to previous results on Navier-Stokes-Korteweg system (Coriolis force can be easily handled at this level),
we know the existence of a weak solution $(\rho_\veps,u_\veps)$ to equations \eqref{eq:mass}-\eqref{eq:mom-sing}.
Our mathematical problem is then to study the asymptotic behaviour of the family $\bigl(\rho_\veps,u_\veps\bigr)_\veps$
in the limit $\veps\,\ra\,0$. We recall that this corresponds to making the incompressible and fast rotation limit together, in the regime of constant capillarity.

Indeed, we remark that the same system could be obtained from \eqref{eq:mass}-\eqref{eq:momentum} using also a different approach (see \cite{J-L-W} and \cite{F_2015}),
in order to investigate the long-time behaviour of solutions.
Namely, one can perform the scaling $t\mapsto\veps t$, $u\mapsto\veps u$ and
$\nu\mapsto\veps\nu$, set the capillarity coefficient $k=\veps^{2\alpha}$, for some $0\leq\alpha\leq1$, and divide everything by $\veps^2$ (i.e. the highest power of $\veps$).
One finally bumps exactly into \eqref{eq:mom-sing} when making the choice $\alpha=0$, which means $k=1$: this is why we call this scaling the ``constant capillarity regime''.
Taking different values of $\alpha$ was investigated in \cite{F_2015}: we refer to this paper for results in both the regimes of constant and vanishing capillarity,
for the whole range of vanishing rates (namely, for any fixed $\alpha\in[0,1]$).

We conclude this part by pointing out that different scalings were allowed, in principle, for the Mach and Rossby numbers.
However, on the one hand our choice $Ma=Ro=\veps$ is in accordance with previous related works, see e.g. \cite{F-G-N}, \cite{F-G-GV-N}. This is the right scaling
in order to recover the \emph{geostrophic balance}, i.e. the equilibrium between strong Coriolis force and pressure (see e.g. \cite{CR}, \cite{Ped}, \cite{SR}).

On the other hand, we have to remark the strong analogy between our equations \eqref{eq:mass}-\eqref{eq:momentum} and the viscous shallow water system, studied in e.g. \cite{B-D_2003}-\cite{B-D-Met}
(which however makes sense only in $2$-D, because it derives when averaging vertical motions). We also refer to \cite{G}-\cite{G-SR_Mem} for a related system,
relevant in the investigation of the betaplane model for equatorial dynamics, where however the viscosity coefficients are taken to be constant.
In shallow water equations, the term corresponding to the pressure (density is actually replaced by the depth variation function) is rescaled according to
the so-called \emph{Froude number}: if $N$ denotes the stratification frequency (we have the relation $N^2\,\sim\,\d_3\rho$) and $H$ the typical depth of the domain, we define
$$
Fr\,:=\,\frac{U}{N\,H}\,.
$$
This number measures the importance of vertical stratification with respect to the (main) horizontal flow.
Pysical considerations reveal that for large-scale motions, in general, the relation $Fr\,^2\,\lesssim\,Ro$ must hold; moreover, a special regime
occurs when rotation and stratification effects are comparable, namely when $Fr\,\sim\,Ro$, a case which corresponds to take the so-called \emph{Burger number} proportional to $1$.
We refer to Chapter 9 of \cite{CR} for more details about this point.
By the parallel between our system and the shallow water system, there is a strict correspondence between $Fr$ and $Ma$: taking then the scaling $Ma=Ro=\veps$ is a way to
capture such a special regime when $Fr\,\sim\,Ro$.

\section{Mathematical properties of the model} \label{s:math_prop}

In the present section we want to give some insights on the main mathematical features our model enjoys.
These properties are quite general, since they are determined by capillarity on the one side and by Earth rotation on the other side. Nonetheless,
for clarity of exposition and for sake of conciseness, we prefer to focus on our special working setting: let us fix our assumptions first.

\subsection{Fixing the working hypotheses} \label{ss:hypotheses}
Let us recall here our working setting, and introduce further important assumptions in order to tackle the mathematical problem.

We fix the domain $\Omega\,:=\,\R^2\,\times\,\,]0,1[\,$. Recall that, accordingly, we split $x\in\Omega$ as $x=(x^h,x^3)$. 
In $\R_+\times\Omega$, and for a small parameter $0<\veps\leq1$, let us consider the rescaled Navier-Stokes-Korteweg system
\begin{equation} \label{eq:NSK-sing}
\begin{cases}
\d_t\rho\,+\,\div\left(\rho u\right)\,=\,0 \\[1ex]
\d_t\left(\rho u\right)\,+\,\div\bigl(\rho u\otimes u\bigr)\,+\,
\dfrac{1}{\veps^2}\,\nabla\Pi(\rho)\,-\,\nu\,\div\bigl(\rho Du\bigr)\,-\,
\dfrac{1}{\veps^2}\,\rho\nabla\Delta\rho\,+\,\dfrac{1}{\veps}\,\mf{C}(\rho,u)\,=\,0\,.
\end{cases}
\end{equation}

Here, we suppose that the Coriolis operator $\mf C$ is given by formula \eqref{eq:coriolis},
where $\mf{c}$ is a scalar function of the horizontal variables only. For the time being, let us assume
$\mf c\,\in\,W^{1,\infty}(\R^2)$: this hypothesis will allow us to establish uniform bounds in Subsection \ref{ss:cap-effects}.
See also Theorem \ref{t:weak} and the subsequent remark in Subsection \ref{ss:weak}

In addition, we assume that the pressure $\Pi(\rho)$ can be decomposed into $\Pi\,=\,P\,+\,P_c$, where $P$ is the classical
pressure law, given by the Boyle relation
\begin{equation} \label{eq:def_P}
P(\rho)\,:=\,\frac{1}{2\g}\,\rho^\g\,,\qquad\qquad\mbox{ for some }\quad1\,<\,\g\,\leq\,2\,,
\end{equation}
and the second term is the so-called \emph{cold pressure} component, for which we take the expression
\begin{equation} \label{eq:def_cold}
P_c(\rho)\,:=\,-\,\frac{1}{2\g_c}\,\rho^{-\g_c}\,,\qquad\qquad\mbox{ with }\qquad 1\,\leq\,\g_c\,\leq\,2\,.
\end{equation}
For us, $\g_c=2$ always. The presence of the $1/2$ is just a normalization in order to have $\Pi'(1)=1$: this fact simplifies some computations in the sequel.
Having a cold component in the pressure law is important in the theory of existence of weak solutions (this is our motivation in assuming it), but it plays no special role in the singular
perturbation analysis.

As said in Subsection \ref{ss:motion}, we supplement system \eqref{eq:NSK-sing} by complete slip boundary conditions,
in order to avoid the appearing of boundary layers effects.
Namely, if $n$ denotes the unitary outward normal to the boundary $\d\Omega$ of the domain (simply,
$\d\Omega=\{x^3=0\}\cup\{x^3=1\}$), we impose
\begin{equation} \label{eq:bc}
\left(u\cdot n\right)_{|\d\Omega}\,=\,0\,,\qquad
\left(\nabla\rho\cdot n\right)_{|\d\Omega}\,=\,0\,,\qquad
\bigl((Du)n\times n\bigr)_{|\d\Omega}\,=\,0\,.
\end{equation}

\begin{rem} \label{r:period-bc}
Equations \eqref{eq:NSK-sing}, supplemented by boundary conditions \eqref{eq:bc},
can be recasted as a periodic problem with respect to the vertical variable, in the new domain
$$
\wtilde \Omega\,=\,\R^2\,\times\,\mbb{T}^1\,,\qquad\qquad\mbox{ with }\qquad\mbb{T}^1\,:=\,[-1,1]/\sim\,,
$$
where $\sim$ denotes the equivalence relation which identifies $-1$ and $1$.
Indeed, the equations are invariant if we extend $\rho$ and $u^h$ as even functions with respect to $x^3$, and $u^3$ as an odd function.

In what follows, we will always assume that such modifications have been performed on the initial data, and
that the respective solutions keep the same symmetry properties.
\end{rem}

Here we will always consider initial data $(\rho_0,u_0)$ such that $\rho_0\geq0$ and
\begin{equation} \label{eq:initial}
\begin{cases}
\dfrac{1}{\veps}\left(\rho_0\,-\,1\right)\;\in\;L^\g(\Omega)\qquad\mbox{ and }\qquad
\dfrac{1}{\veps}\left(\dfrac{1}{\rho_0}\,-\,1\right)\;\in\;L^{\g_c}(\Omega) \\[1ex]
\sqrt{\rho_0}\,u_0\;,\;\nabla\sqrt{\rho_0}\;,\;\dfrac{1}{\veps}\nabla\rho_0\quad\in\;L^2(\Omega)\,.
\end{cases}
\end{equation}
When taking a family $\bigl(\rho_{0,\veps},u_{0,\veps}\bigr)_\veps$ of initial data, we will require that conditions \eqref{eq:initial} hold true for any
$\veps\in\,]0,1]$, and \emph{uniformly} in $\veps$ (see also the hypotheses at the beginning of Subsection \ref{ss:var-result}).

At this point, we introduce the internal energy functions $h(\rho)$ and $h_c(\rho)$ in the following way: we require that
$$
\begin{cases}
h''(\rho)\,=\,\dfrac{P'(\rho)}{\rho}\,=\,\rho^{\g-2}\qquad\qquad\mbox{ and }\qquad\qquad
h(1)\,=\,h'(1)\,=\,0\,, \\[2ex]
h_c''(\rho)\,=\,\dfrac{P_c'(\rho)}{\rho}\,=\,\rho^{-\g_c-2}\qquad\qquad\mbox{ and }\qquad\qquad
h_c(1)\,=\,h_c'(1)\,=\,0\,.
\end{cases}
$$
Let us define the classical energy 
\begin{equation}  \label{eq:def_E_2} 
E_\veps[\rho,u](t) \;:=\;\int_\Omega\left(\dfrac{1}{\veps^2}\,h(\rho)\,+\,\dfrac{1}{\veps^2}\,h_c(\rho)\,+\,
\dfrac{1}{2}\,\rho\,|u|^2\,+\,
\dfrac{1}{2\,{\veps^2}}\,|\nabla\rho|^2\right)dx\,,
\end{equation}
and the BD entropy function
\begin{equation} \label{eq:def_F}
F_\veps[\rho](t)\;:=\;\frac{\nu^2}{2}\int_\Omega\rho\,|\nabla\log\rho|^2\,dx\;=\;
2\,\nu^2\int_\Omega\left|\nabla\sqrt{\rho}\right|^2\,dx\,.
\end{equation}
Finally, let us denote by $E_\veps[\rho_0,u_0]\,\equiv\,E_\veps[\rho,u](0)$ and by
$F_\veps[\rho_0]\,\equiv\,F_\veps[\rho](0)$ the same energies, when computed on the initial data $\bigl(\rho_0,u_0\bigr)$.

\begin{rem} \label{r:en_initial}
Notice that, under our assumptions, we deduce the existence of a ``universal constant'' $C_0>0$ such that
$$
E_\veps[\rho_{0,\veps}\,,\,u_{0,\veps}]\,+\,F_\veps[\rho_{0,\veps}]\,\leq\,C_0\,.
$$
\end{rem}

\subsection{Features due to capillarity} \label{ss:cap-effects}

We now enter more in detail in the description of the properties of our model. Here we look at capillarity effects.
The main point is to establish energy estimates (in Paragraph \ref{sss:BD}), which will be of two kinds: classical energy estimates and BD entropy estimates.

\subsubsection{On the BD entropy structure} \label{sss:BD}

As we have already remarked in the Introduction, the combination of capillarity and density-dependent viscosity coefficient gives a special mathematical
structure to our system, the so-called \emph{BD structure}, which consists of a second energy conservation and which in turn allows to exploit the presence of the Korteweg tensor
in the equations, proving higher order regularity estimates for $\rho$.
This additional regularity is fundamental both in the theory of weak solutions and in the study of the singular perturbation problem.

The BD entropy structure was first discovered by Bresch and Desjardins for Korteweg models (see also paper \cite{B-D-L}), but it was then generalized by the same authors to
several other systems having density-dependent viscosity coefficients.
More precisely, the BD structure is a fundamental tool in weak solutions theory for compressible fluids with degenerate (i.e. vanishing with vacuum) viscosity
coefficients. For further details, the reader is referred to \cite{B-D-Zat} and the references therein (see also the introductions of \cite{F_2015}-\cite{F_2016}).

\medbreak
We now present energy estimates and additional uniform bounds for the family of solutions $\bigl(\rho_\veps,u_\veps\bigr)_\veps$ to system \eqref{eq:NSK-sing}.
Since the Coriolis term makes no work, classical energy estimates are easy to find. This is not the case for the BD entropy: the main point is then to control
the rotation term uniformly in $\veps$. We refer to papers \cite{F_2015}-\cite{F_2016} for the proofs of the estimates.
Of course, the precise computations are rigorous for smooth enough solutions: see Subsection \ref{ss:weak} for more details.

We start by proving classical energy estimates.
\begin{prop} \label{p:E}
Let $(\rho,u)$ be a smooth solution to system \eqref{eq:NSK-sing} in $\R_+\times\Omega$, related to the initial datum $\bigl(\rho_0,u_0\bigr)$.

Then, for all $\veps>0$ and all $t\in\R_+$, one has
$$
\frac{d}{dt}E_\veps[\rho,u]\,+\,\nu\int_\Omega\rho\,|Du|^2\,dx\,=\,0\,.
$$
\end{prop}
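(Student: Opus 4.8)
The plan is to run the standard energy balance: take the $L^2(\Omega)$ scalar product of the momentum equation in \eqref{eq:NSK-sing} with the velocity $u$, integrate over $\Omega$, and recognize each resulting term either as the time derivative of one of the four summands defining $E_\veps$ in \eqref{eq:def_E_2}, or as the dissipation $\nu\int_\Omega\rho|Du|^2\,dx$, or as zero. Since we argue on a smooth solution, all the integrations by parts below are licit, provided one also assumes enough decay as $|x^h|\to+\infty$ to discard contributions from spatial infinity; the boundary at $x^3\in\{0,1\}$ is dealt with by the three conditions in \eqref{eq:bc}.

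First I would treat the material-derivative part: integrating $\div(\rho u\otimes u)\cdot u$ by parts (the boundary term $\int_{\partial\Omega}\rho|u|^2\,(u\cdot n)$ vanishing by the first condition in \eqref{eq:bc}) and reorganizing the $\partial_t\rho$-terms produced when the time derivative hits the density, one uses the continuity equation (first line of \eqref{eq:NSK-sing}) to obtain $\int_\Omega\bigl(\partial_t(\rho u)+\div(\rho u\otimes u)\bigr)\cdot u\,dx=\frac{d}{dt}\int_\Omega\frac12\rho|u|^2\,dx$. Next, for the pressure term I would exploit the definitions of $h$ and $h_c$: since $h''(\rho)=P'(\rho)/\rho$, integrating $-\int_\Omega h'(\rho)\,\div(\rho u)\,dx$ by parts (boundary term $\propto\rho u\cdot n=0$) gives $\frac{d}{dt}\int_\Omega h(\rho)\,dx=\int_\Omega\nabla P(\rho)\cdot u\,dx$, and likewise with $h_c,P_c$; hence $\frac{1}{\veps^2}\int_\Omega\nabla\Pi(\rho)\cdot u\,dx=\frac{d}{dt}\int_\Omega\frac1{\veps^2}\bigl(h(\rho)+h_c(\rho)\bigr)\,dx$.

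Then I would handle the capillary term $-\frac1{\veps^2}\int_\Omega\rho\,\nabla\Delta\rho\cdot u\,dx$: integrating by parts to move $\nabla$ onto $\rho u$ (boundary term $\propto\rho u\cdot n=0$) turns it into $\frac1{\veps^2}\int_\Omega\Delta\rho\,\div(\rho u)\,dx=-\frac1{\veps^2}\int_\Omega\Delta\rho\,\partial_t\rho\,dx$ by the mass equation, and one further integration by parts — where the second condition in \eqref{eq:bc}, $\nabla\rho\cdot n=0$, kills the boundary term — produces exactly $\frac{d}{dt}\int_\Omega\frac1{2\veps^2}|\nabla\rho|^2\,dx$. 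For the viscous term, $-\nu\int_\Omega\div(\rho Du)\cdot u\,dx$ integrated by parts gives $\nu\int_\Omega\rho\,Du:\nabla u\,dx$, which by symmetry of $Du$ is the nonnegative dissipation $\nu\int_\Omega\rho|Du|^2\,dx$ appearing in the statement (with the normalization of $Du$ fixed in Subsection \ref{ss:motion}); here the boundary term $\int_{\partial\Omega}\rho\,\bigl((Du)n\bigr)\cdot u$ vanishes because the third condition in \eqref{eq:bc} forces $(Du)n$ to be collinear with $n$, hence orthogonal to $u$ on $\partial\Omega$. Finally the Coriolis term contributes nothing, since $\mf C(\rho,u)\cdot u=\mf c(x^h)\,\rho\,(e^3\times u)\cdot u=0$ pointwise — the analytic expression of the fact that the Coriolis force does no work. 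Collecting the five contributions yields the stated identity.

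I do not expect a genuine obstacle here: this is the textbook energy estimate, and the only delicate bookkeeping is (i) using the appropriate condition among the three in \eqref{eq:bc} in each integration by parts (the convective and pressure terms need only $u\cdot n=0$, the capillary term additionally needs $\nabla\rho\cdot n=0$, the viscous term needs $(Du)n\times n=0$), and (ii) tracking the $\partial_t\rho$-terms, all of which are reabsorbed through the continuity equation. The truly nontrivial estimate for this system — the BD entropy estimate, where the Coriolis term is no longer harmless — is a separate matter, and the justification of the identity for weak rather than smooth solutions is deferred to Subsection \ref{ss:weak}.
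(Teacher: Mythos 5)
Your proposal is correct and is exactly the standard energy-balance argument that the paper itself omits (it only remarks that classical energy estimates are easy since the Coriolis term makes no work, and defers the computation to \cite{F_2015}-\cite{F_2016}): multiply the momentum equation by $u$, use the mass equation to absorb the $\d_t\rho$ contributions, and use the three conditions in \eqref{eq:bc} precisely as you indicate for the convective, capillary and viscous boundary terms. The only wrinkle is the last step for the viscous term: with the paper's normalization $Du=\nabla u+\,^t\nabla u$, symmetry gives $\rho\,Du:\nabla u=\tfrac{1}{2}\,\rho\,|Du|^2$, so the dissipation produced by the computation is $\tfrac{\nu}{2}\int_\Omega\rho\,|Du|^2\,dx$ rather than $\nu\int_\Omega\rho\,|Du|^2\,dx$ --- a harmless factor-of-two normalization mismatch that is already present in the statement of the proposition itself (and irrelevant for the uniform bounds of Corollary \ref{c:E}), not a gap in your argument.
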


From the previous statement, we deduce the first class of uniform bounds.
\begin{coroll} \label{c:E}
Let $\g_c=2$, and let $\bigl(\rho_\veps,u_\veps\bigr)_\veps$ be a family of smooth solutions to system \eqref{eq:NSK-sing} in $\R_+\times\Omega$, related to
initial data $\bigl(\rho_{0,\veps}\,,\,u_{0,\veps}\bigr)_\veps$, and assume that the initial energy $E_\veps[\rho_{0,\veps}\,,\,u_{0,\veps}]<+\infty$.

Then one has the following properties, uniformly in $\veps\in\,]0,1]$:
$$
\sqrt{\rho_\veps}\,u_\veps\;,\;\frac{1}{\veps}\,\nabla\rho_\veps\quad\in\;L^\infty\bigl(\R_+;L^2(\Omega)\bigr)\quad\mbox{ and }\quad
\sqrt{\rho_\veps}\,Du_\veps\;\in\;L^2\bigl(\R_+;L^2(\Omega)\bigr)\,.
$$
Moreover, we also get
$$
\frac{1}{\veps}\left(\rho_\veps\,-\,1\right)\,\in\,L^\infty\bigl(\R_+;L^\g(\Omega)\bigr)\qquad\mbox{ and }\qquad
\frac{1}{\veps}\left(\frac{1}{\rho_\veps}\,-\,1\right)\,\in\,L^\infty\bigl(\R_+;L^2(\Omega)\bigr)\,.
$$
\end{coroll}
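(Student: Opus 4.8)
The plan is to integrate in time the energy balance of Proposition~\ref{p:E} and to read off the six bounds from the (nonnegative) terms of the energy $E_\veps$, the last two being obtained by converting the control of the internal energies $h$ and $h_c$ into estimates on the density fluctuations via convexity.

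Integrating the identity of Proposition~\ref{p:E} over $[0,t]$ gives, for a smooth solution,
$$
E_\veps[\rho_\veps,u_\veps](t)\;+\;\nu\int_0^t\!\!\int_\Omega\rho_\veps\,|Du_\veps|^2\,dx\,ds\;=\;E_\veps[\rho_{0,\veps},u_{0,\veps}]\,.
$$
Since $h''=\rho^{\g-2}>0$ with $h(1)=h'(1)=0$ — and likewise for $h_c$ — both $h$ and $h_c$ are convex and nonnegative, so every term in the definition~\eqref{eq:def_E_2} of $E_\veps$, as well as $\rho_\veps|Du_\veps|^2$, is nonnegative. By Remark~\ref{r:en_initial} we have $E_\veps[\rho_{0,\veps},u_{0,\veps}]\le C_0$ uniformly in $\veps\in\,]0,1]$; hence $E_\veps[\rho_\veps,u_\veps](t)\le C_0$ for a.e.\ $t$, and, letting $t\to+\infty$, also $\nu\int_0^{+\infty}\!\int_\Omega\rho_\veps|Du_\veps|^2\le C_0$, both uniformly in $\veps$. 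Keeping only the terms $\frac12\rho_\veps|u_\veps|^2$ and $\frac{1}{2\veps^2}|\nabla\rho_\veps|^2$ of $E_\veps$, together with the dissipation integral, yields at once the three bounds on $\sqrt{\rho_\veps}\,u_\veps$, on $\frac1\veps\nabla\rho_\veps$ and on $\sqrt{\rho_\veps}\,Du_\veps$.

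There remain the bounds on $\frac1\veps(\rho_\veps-1)$ and $\frac1\veps(\rho_\veps^{-1}-1)$, which must come from $\frac{1}{\veps^2}\int_\Omega h(\rho_\veps)\le C_0$ and $\frac{1}{\veps^2}\int_\Omega h_c(\rho_\veps)\le C_0$. The point is a quantitative coercivity estimate for each internal energy. For the cold component, using $\g_c=2$ one checks from the explicit expression of $h_c$ (smooth, strictly convex, vanishing with its derivative at $\rho=1$, of size $\rho^{-2}$ near the vacuum and linear at infinity) that there is a constant $c>0$ with $h_c(\rho)\ge c\,(\rho^{-1}-1)^2$ for \emph{all} $\rho>0$; this gives directly $\big\|\frac1\veps(\rho_\veps^{-1}-1)\big\|_{L^\infty(\R_+;L^2)}^2\le C_0/c$. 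For $h$ one uses Taylor's formula near $\rho=1$ to get $h(\rho)\ge c(\rho-1)^2$ on, say, $\{1/2\le\rho\le3/2\}$, and the $\g$-growth of $h$ away from $1$ to get $h(\rho)\ge c\,|\rho-1|^\g$ there; splitting $\Omega$ into the corresponding ``essential'' and ``residual'' sets and using $\veps^{-\g}\le\veps^{-2}$ for $\veps\le1$, one controls $\frac1\veps(\rho_\veps-1)$ in $L^\infty(\R_+;L^\g)$ uniformly in $\veps$ (with the essential part controlled, more precisely, in $L^\infty(\R_+;L^2)$).

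The first part (time integration and the three direct bounds) is routine. The delicate point is the last one: establishing the coercivity inequalities for $h$ and $h_c$ with constants uniform in the admissible ranges of $\g$, $\g_c$, and — since $\Omega$ has infinite measure — treating correctly the region where $\rho_\veps$ is close to $1$ in the $L^\g$ estimate, which forces the essential/residual decomposition rather than a plain interpolation (for $\g_c=2$ the cold estimate is, by contrast, global, which is exactly why that value is assumed). Making the formal computation of the energy balance rigorous for the actual weak solutions is a further, separate issue, dealt with through the approximation scheme of the existence theory; see Subsection~\ref{ss:weak}.
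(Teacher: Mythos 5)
Your strategy --- integrate the identity of Proposition \ref{p:E} in time, bound the initial energy uniformly via Remark \ref{r:en_initial}, read the first three bounds off the nonnegative terms of $E_\veps$ and of the dissipation, and convert the internal-energy bounds into bounds on the density fluctuations by coercivity of $h$ and $h_c$ --- is exactly the intended argument (the paper defers the details to \cite{F_2015}--\cite{F_2016}). The cold-pressure part is correct: with the paper's normalization $h_c''(\rho)=\rho^{-4}$, $h_c(1)=h_c'(1)=0$, one computes $h_c(\rho)=\frac13\bigl((\rho-1)+\frac12(\rho^{-2}-1)\bigr)$, and setting $x=\rho^{-1}-1$ this equals $\frac13\bigl(\frac{x^2}{1+x}+\frac{x^2}{2}\bigr)\,\ge\,\frac{x^2}{6}$ for all $\rho>0$, which gives the uniform $L^\infty\bigl(\R_+;L^2\bigr)$ bound on $\veps^{-1}(1/\rho_\veps-1)$; this is indeed where $\g_c=2$ is used.

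The gap is in the very last deduction, for $\veps^{-1}(\rho_\veps-1)$ when $\g<2$. Your essential/residual splitting yields precisely: the essential part (where $|\rho_\veps-1|\le1/2$) bounded in $L^\infty\bigl(\R_+;L^2\bigr)$, and the residual part bounded in $L^\infty\bigl(\R_+;L^\g\bigr)$, since the integral of $|\rho_\veps-1|^\g$ over the residual set is $\le C\veps^2$ and $\veps^{2-\g}\le1$. But the essential set has infinite measure ($\Omega=\R^2\times\,]0,1[$ is unbounded, while the residual set has measure $O(\veps^2)$), so on it the $L^2$ control does \emph{not} imply $L^\g$ control for $\g<2$: your parenthetical ``the essential part is controlled, more precisely, in $L^2$'' has the implication backwards. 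What your argument actually proves is a uniform bound in $L^\infty\bigl(\R_+;L^2+L^\g\bigr)$, which coincides with the stated conclusion only for $\g=2$; and the classical energy alone offers no mechanism to do better (a profile $\rho=1+\veps f$ with $0\le f\le1$, $f\in H^1(\Omega)\setminus L^\g(\Omega)$, has uniformly bounded energy but infinite $L^\g$ fluctuation), nor does the continuity equation propagate the $L^\g$ bound of the data. So either restrict to $\g=2$ or state and use the estimate in the essential--residual (i.e.\ $L^2+L^\g$) sense, as in the Feireisl--Novotn\'y-type framework this corollary comes from; note that the subsequent analysis of the paper only exploits the $L^2$-based information (cf.\ Remark \ref{r:rho_L^2} and the bounds on $r_\veps$). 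Everything else in your proposal is fine.
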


\begin{rem} \label{r:rho_L^2}
In particular, under our assumptions we have
$$ 
\left\|\rho_\veps\,-\,1\right\|_{L^\infty(\R_+;L^2(\Omega))}\,\leq\,C\,\veps\,.
$$
\end{rem}

BD entropy estimates are harder to establish. Let us present the final estimate, without giving the details on how controlling the Coriolis force uniformly in $\veps$.
We remark that at this point the hypothesis $\nabla_h\mf c\in L^{\infty}$ comes into play in the proof (see also Remark \ref{r:existence} below).

\begin{prop} \label{p:F}
Let $(\rho,u)$ be a smooth solution to system \eqref{eq:NSK-sing} in $\R_+\times\Omega$, related to the initial
datum $\bigl(\rho_0,u_0\bigr)$.

Then, there exists a positive constant $C$, such that, for all $T\in\R_+$ fixed, one has
$$ 
\sup_{t\in[0,T]}F[\rho](t)\,+\,\frac{\nu}{\veps^2}\int^T_0\!\!\int_\Omega\left|\nabla^2\rho\right|^2\,dx\,dt\,+\,
\frac{\nu}{\veps^2}\int^T_0\!\!\int_\Omega\Pi'(\rho)\,\left|\nabla\sqrt{\rho}\right|^2\,dx\,dt\,\leq\,C\,(1+T)\,.
$$ 
The constant $C$ depends just on the viscosity coefficient $\nu$ and on the energies of the initial data $E_\veps[\rho_0,u_0]$ and $F_\veps[\rho_0]$.
\end{prop}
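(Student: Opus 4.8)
The plan is to derive the BD entropy balance by testing the momentum equation with a suitable modified velocity and exploiting the precise algebraic matching between the degenerate viscosity $\nu_1(\rho)=\nu\rho$, $\nu_2\equiv 0$ and the capillarity term $\rho\nabla\Delta\rho$. The standard Bresch--Desjardins computation proceeds as follows. First I would introduce the effective velocity $w:=u+\nu\nabla\log\rho$, and observe that $F_\veps[\rho]=\frac{\nu^2}{2}\int_\Omega\rho\,|\nabla\log\rho|^2\,dx$ is precisely the ``cross'' contribution one obtains when expanding $\frac12\int_\Omega\rho\,|w|^2\,dx$. Differentiating the quantity $\frac12\int_\Omega\rho\,|w|^2\,dx$ in time, using the mass equation \eqref{eq:mass} to rewrite $\d_t\rho$ and the momentum equation \eqref{eq:mom-sing} for $\d_t(\rho u)$, and integrating by parts (the boundary terms vanish thanks to the complete slip conditions \eqref{eq:bc}, or equivalently by the periodic-reflection reformulation of Remark \ref{r:period-bc}), one lands on an identity of the schematic form
\[
\frac{d}{dt}\!\left(\tfrac12\!\int_\Omega\!\rho|w|^2dx\right)+\frac{\nu}{\veps^2}\!\int_\Omega\!\Pi'(\rho)|\nabla\sqrt\rho|^2dx+\frac{\nu}{\veps^2}\!\int_\Omega\!|\nabla^2\rho|^2dx+\nu\!\int_\Omega\!\rho|Au|^2dx=\mathcal R_\veps,
\]
where $Au=\nabla u-{}^t\nabla u$ is the antisymmetric part of the gradient (the viscous term splits as $\rho|Du|^2=\rho|Au|^2+(\text{divergence-type terms})$ after the change of variable), and $\mathcal R_\veps$ collects the remainder, in particular the contribution of the Coriolis operator $\mf{C}(\rho,u)/\veps$.

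The key point — and the place where the hypothesis $\nabla_h\mf c\in L^\infty$ is used — is the control of the rotational remainder. Testing $\frac1\veps\mf C(\rho,u)=\frac1\veps\,\mf c(x^h)\,e^3\times\rho u$ against $w=u+\nu\nabla\log\rho$, the genuinely $u$-against-$u$ piece vanishes because $e^3\times\rho u\perp u$ (Coriolis does no work, as stressed after \eqref{eq:coriolis}); what survives is $\frac{\nu}{\veps}\int_\Omega \mf c(x^h)\,(e^3\times\rho u)\cdot\nabla\log\rho\,dx=\frac{\nu}{\veps}\int_\Omega\mf c(x^h)\,(e^3\times u)\cdot\nabla\rho\,dx$. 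Integrating by parts and using $\div(e^3\times u)= e^3\cdot\mathrm{curl}\,u$-type cancellations, together with the fact that $e^3\times\nabla_h(\cdot)$ hits only horizontal derivatives, this term is bounded by $\frac{C}{\veps}\,\|\nabla_h\mf c\|_{L^\infty}\,\|\rho-1\|_{L^2}\,\|u\|\text{-type norm}$; Remark \ref{r:rho_L^2} gives $\|\rho_\veps-1\|_{L^2}\le C\veps$, which absorbs the dangerous $1/\veps$ and yields an $\veps$-uniform bound after a Cauchy--Schwarz splitting that throws a small multiple of $\nu\int_\Omega\rho|Du|^2$ and of $\frac{\nu}{\veps^2}\int_\Omega|\nabla^2\rho|^2$ onto the left-hand side. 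The remaining pieces of $\mathcal R_\veps$ — those coming from the convective term $\div(\rho u\otimes u)$ and from the pressure — are the classical Bresch--Desjardins terms and are handled by Proposition \ref{p:E} and Corollary \ref{c:E}: they are integrable in time with a bound of the form $C(1+T)$.

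Finally I would integrate the resulting differential inequality on $[0,T]$, use $\frac12\int_\Omega\rho|w|^2dx\ge \tfrac14 F_\veps[\rho]- \tfrac12\int_\Omega\rho|u|^2dx$ together with the uniform control of $\int_\Omega\rho_\veps|u_\veps|^2$ from Corollary \ref{c:E} to extract $\sup_{[0,T]}F[\rho](t)$, and invoke Remark \ref{r:en_initial} to bound the initial contribution $E_\veps[\rho_0,u_0]+F_\veps[\rho_0]$ by the universal constant $C_0$. This gives exactly the claimed estimate, with $C$ depending only on $\nu$, on $E_\veps[\rho_0,u_0]$, $F_\veps[\rho_0]$ (hence ultimately on $\nu$, $C_0$ and $\|\nabla_h\mf c\|_{L^\infty}$). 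The main obstacle, as indicated, is genuinely the uniform-in-$\veps$ estimate of the Coriolis remainder: it is the one term that is not compatible with the BD structure in an obvious way, and it is only the combination of the ``no work'' property with the quantitative smallness $\|\rho_\veps-1\|_{L^2}=O(\veps)$ and the regularity $\nabla_h\mf c\in L^\infty$ that rescues it; all other steps are routine integrations by parts justified on smooth solutions as in the statement.
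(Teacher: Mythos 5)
Your overall architecture is the one the paper itself points to (the proof is in fact deferred to \cite{F_2015}--\cite{F_2016}): a Bresch--Desjardins computation based on an effective velocity, the exact compatibility between $\nu_1(\rho)=\nu\rho$, $\nu_2\equiv0$ and $k\equiv1$, integration in time plus Remark \ref{r:en_initial}, with the only real difficulty being the uniform-in-$\veps$ control of the Coriolis contribution, where $\nabla_h\mf{c}\in L^\infty$ enters. Two presentational points first: with the normalization $Du=\nabla u+{}^t\nabla u$ the effective velocity giving the clean identity is $u+2\nu\nabla\log\rho$, not $u+\nu\nabla\log\rho$ (with your choice, non-signed extra terms involving the \emph{unweighted} $\nabla u$ survive), and in the exact identity the pressure and convective contributions are not ``remainders to be bounded by Proposition \ref{p:E}'': the pressure produces precisely the signed dissipation $\frac{\nu}{\veps^2}\int\Pi'(\rho)|\nabla\sqrt\rho|^2$ appearing in the statement, and the convection is absorbed into the transport structure.

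The genuine gap is in the one step that matters, namely the rotation term $T=\frac{\nu}{\veps}\int_\Omega \mf{c}\,(e^3\times u)\cdot\nabla\rho\,dx$. Integrating by parts gives
\begin{equation*}
T\,=\,-\,\frac{\nu}{\veps}\int_\Omega(\rho-1)\,\nabla_h\mf{c}\cdot u^{h,\perp}\,dx\,+\,\frac{\nu}{\veps}\int_\Omega(\rho-1)\,\mf{c}\;{\rm curl}_h u^h\,dx\,,
\end{equation*}
and the second term does \emph{not} cancel (${\rm div}_h(u^{h,\perp})=-{\rm curl}_h u^h\neq0$); moreover it cannot be absorbed into $\nu\int\rho|Du|^2$, since that dissipation only controls the \emph{symmetric} part of the gradient, while the antisymmetric part is controlled (with a $\sqrt\rho$ weight) only by the dissipation that the BD identity itself generates when the correct effective velocity is used. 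In addition, bounds of the form $\frac{C}{\veps}\|\nabla_h\mf c\|_{L^\infty}\|\rho-1\|_{L^2}\|u\|_{L^2}$ are not available: because the viscosity is degenerate and vacuum is allowed, there is no uniform bound on $u$ or ${\rm curl}_h u^h$ without the $\sqrt{\rho}$ weight, and the unweighted bounds on $u_\veps$ listed in Paragraph \ref{sss:bounds} are consequences of Corollary \ref{c:F}, i.e.\ of the very estimate you are proving, so invoking them here is circular. The way to close the argument is to pair $\sqrt\rho\,u$ (resp.\ the $\sqrt\rho$-weighted antisymmetric gradient) with the weighted fluctuation $(\rho-1)/\sqrt\rho$, and to observe that $\|(\rho-1)/\sqrt\rho\|_{L^2}\leq C\bigl(\|\rho-1\|_{L^2}+\|1/\rho-1\|_{L^2}\bigr)\leq C\,\veps$, where the cold-pressure component ($\g_c=2$) is essential near vacuum; it is this weighted $O(\veps)$ smallness, together with $\nabla_h\mf c\in L^\infty$ and an absorption of the antisymmetric dissipation into the left-hand side, that kills the $1/\veps$. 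As written, your Cauchy--Schwarz splitting does not close at exactly the point the paper flags as the crux.
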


Let us point out here that the pressure term can be also written as
$$ 
\frac{\nu}{\veps^2}\int_\Omega\Pi'(\rho)\,\left|\nabla\sqrt{\rho}\right|^2\,=\,
\frac{C_\g\,\nu}{4\,\veps^2}\int_\Omega\left|\nabla\left(\rho^{\g/2}\right)\right|^2\,+\,\frac{C_{\g_c}\,\nu}{4\,\veps^2}\int_\Omega\left|\nabla\left(\rho^{-\g_c/2}\right)\right|^2\,,
$$ 
for some positive constants $C_\g$ and $C_{\g_c}$. In particular, when $\g_c=2$ (which will be our case), $C_{\g_c}=1$.

From Proposition \ref{p:F} we infer the following consequences.
\begin{coroll} \label{c:F}
Let $\g_c=2$, and let $\bigl(\rho_\veps,u_\veps\bigr)_\veps$ be a family of smooth solutions to system \eqref{eq:NSK-sing} in $\R_+\times\Omega$, related to
initial data $\bigl(\rho_{0,\veps}\,,\,u_{0,\veps}\bigr)_\veps$, and assume that the initial energy $E_\veps[\rho_{0,\veps}\,,\,u_{0,\veps}]<+\infty$.

Then one has the following bounds, uniformly for $\veps>0$:
$$
\begin{cases}
\nabla\sqrt{\rho_\veps}\;\in\;L^\infty_{loc}\bigl(\R_+;L^2(\Omega)\bigr) \\[1ex]
\dfrac{1}{\veps}\,\nabla^2\rho_\veps\;,\quad\dfrac{1}{\veps}\,\nabla\!\left(\rho_\veps^{\g/2}\right)\;,\quad
\dfrac{1}{\veps}\,\nabla\!\left(\dfrac{1}{\rho_\veps}\right)\qquad \in\;L^2_{loc}\bigl(\R_+;L^2(\Omega)\bigr)\,.
\end{cases}
$$
In particular, the family $\bigl(\veps^{-1}\,(\rho_\veps-1)\bigr)_\veps$ is bounded in $L^p_{loc}\bigl(\R_+;L^\infty(\Omega)\bigr)$
for any $2\leq p<4$.
\end{coroll}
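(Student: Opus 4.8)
The plan is to obtain the first three bounds directly from Proposition~\ref{p:F}, and then to deduce the $L^p_{loc}\bigl(\R_+;L^\infty\bigr)$ control of the density fluctuation by interpolating a uniform $L^\infty\bigl(\R_+;H^1\bigr)$ bound against a uniform $L^2_{loc}\bigl(\R_+;H^2\bigr)$ bound.

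First I would observe that the constant $C$ in Proposition~\ref{p:F} is uniform in $\veps\in\,]0,1]$: by that statement it depends only on $\nu$ and on $E_\veps[\rho_{0,\veps},u_{0,\veps}]$, $F_\veps[\rho_{0,\veps}]$, which are bounded by a universal constant thanks to Remark~\ref{r:en_initial}. The first three bounds of the statement are then immediate. The term $\sup_{[0,T]}F[\rho_\veps]$, together with the identity \eqref{eq:def_F}, yields a uniform bound for $\nabla\sqrt{\rho_\veps}$ in $L^\infty_{loc}\bigl(\R_+;L^2(\Omega)\bigr)$; the term $\veps^{-2}\int_0^T\!\!\int_\Omega|\nabla^2\rho_\veps|^2$ gives $\veps^{-1}\nabla^2\rho_\veps$ bounded in $L^2_{loc}\bigl(\R_+;L^2(\Omega)\bigr)$; and the pressure term, rewritten in the form displayed right after Proposition~\ref{p:F} and recalling that $\g_c=2$ (so that $\rho^{-\g_c/2}=\rho^{-1}$ and $C_{\g_c}=1$), provides the uniform $L^2_{loc}\bigl(\R_+;L^2(\Omega)\bigr)$ bounds for $\veps^{-1}\nabla\bigl(\rho_\veps^{\g/2}\bigr)$ and for $\veps^{-1}\nabla(1/\rho_\veps)$.

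For the last assertion I would set $r_\veps:=\veps^{-1}(\rho_\veps-1)$, so that $\nabla r_\veps=\veps^{-1}\nabla\rho_\veps$ and $\nabla^2 r_\veps=\veps^{-1}\nabla^2\rho_\veps$. By Remark~\ref{r:rho_L^2} the family $(r_\veps)_\veps$ is bounded in $L^\infty\bigl(\R_+;L^2(\Omega)\bigr)$; by Corollary~\ref{c:E} the family $(\nabla r_\veps)_\veps$ is bounded in $L^\infty\bigl(\R_+;L^2(\Omega)\bigr)$; and by the previous step $(\nabla^2 r_\veps)_\veps$ is bounded in $L^2_{loc}\bigl(\R_+;L^2(\Omega)\bigr)$. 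Hence $(r_\veps)_\veps$ is bounded in $L^\infty\bigl(\R_+;H^1(\Omega)\bigr)\cap L^2_{loc}\bigl(\R_+;H^2(\Omega)\bigr)$, uniformly in $\veps$. It is convenient to work on $\wtilde\Omega=\R^2\times\mbb{T}^1$, after the even/odd reflection of Remark~\ref{r:period-bc}: there $H^s\hookrightarrow L^\infty$ for every $s>3/2$, and the interpolation inequality $\|f\|_{H^{3/2+\delta}}\leq C\,\|f\|_{H^1}^{1/2-\delta}\,\|f\|_{H^2}^{1/2+\delta}$ holds for $0<\delta<1/2$. Fixing $p\in[2,4[$ and then $\delta\in\,]0,(4-p)/(2p)[\,$, so that $p(1/2+\delta)<2$, the Sobolev embedding, the interpolation inequality and a Hölder inequality in time give, for every $T>0$,
\[
\int_0^T\|r_\veps(t)\|_{L^\infty(\Omega)}^p\,dt\,\leq\,C\,\Bigl(\sup_{t\in[0,T]}\|r_\veps(t)\|_{H^1(\Omega)}\Bigr)^{p(1/2-\delta)}\,T^{\,1-p(1/2+\delta)/2}\,\Bigl(\int_0^T\|r_\veps(t)\|_{H^2(\Omega)}^2\,dt\Bigr)^{p(1/2+\delta)/2},
\]
whose right-hand side is finite and bounded independently of $\veps$ by the bounds just established. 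Since $p\in[2,4[$ was arbitrary, this proves the last claim.

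Everything except the last step is plain bookkeeping on the estimates of Proposition~\ref{p:F}, Corollary~\ref{c:E} and Remark~\ref{r:rho_L^2}; the only genuinely analytic ingredient — and the main point where some care is needed — is the interpolation of the third step, in particular the choice of exponents that lets $p$ approach (but not reach) the critical value $4$, together with the observation that the resulting bound is necessarily only local in time, since the constant in Proposition~\ref{p:F} grows linearly in $T$.
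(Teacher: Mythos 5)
Your argument is correct and follows essentially the route the paper (and the references \cite{F_2015}-\cite{F_2016} it points to) takes: the first three bounds are read off directly from Proposition \ref{p:F} together with Remark \ref{r:en_initial} and the rewriting of the pressure term with $\g_c=2$, and the $L^p_{loc}\bigl(\R_+;L^\infty\bigr)$ claim follows from the uniform $L^\infty_T(H^1)\cap L^2_T(H^2)$ control of $r_\veps$ via interpolation through $H^{3/2+\delta}\hookrightarrow L^\infty$ and H\"older in time, which is exactly why the exponent $p=4$ is excluded. Your bookkeeping of exponents ($\delta<(4-p)/(2p)$) and the observation that the bound is only local in time, due to the $(1+T)$ growth in Proposition \ref{p:F}, are both accurate.
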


\subsubsection{Additional bounds} \label{sss:bounds}

Let us continue and deduce further uniform bounds on a family $\bigl(\rho_\veps,u_\veps\bigr)_\veps$ of solutions to system \eqref{eq:NSK-sing} in $\R_+\times\Omega$, related to
initial data $\bigl(\rho_{0,\veps}\,,\,u_{0,\veps}\bigr)_\veps$ such that $E_\veps[\rho_{0,\veps}\,,\,u_{0,\veps}]<+\infty$.

First of all, working a little bit one can establish, uniformly in $\veps$:
\begin{eqnarray*}
\bigl(u_\veps\bigr)_\veps & \subset & L^\infty_T\bigl(L^2\bigr)+L^2_T\bigl(L^{3/2}\bigr)\,\hookrightarrow\,
L^2_T\bigl(L^{3/2}_{loc}\bigr) \\ 
\bigl(Du_\veps\bigr)_\veps & \subset & \Bigl(L^2_T\bigl(L^2\bigr)
+L^1_T\bigl(L^{3/2}\bigr)\Bigr)\,\cap\,\Bigl(L^2_T\bigl(L^2+L^1\bigr)\Bigr)\,. 
\end{eqnarray*}
In particular, $\bigl(Du_\veps\bigr)_\veps$ is uniformly bounded in $L^1_T\bigl(L^{3/2}_{loc}\bigr)$; therefore,
by Sobolev embeddings we gather also the additional continuous inclusion $\bigl(u_\veps\bigr)_\veps\,\subset\,L^1_T\bigl(L^3_{loc}\bigr)$.

Furthermore, we also infer the uniform bounds
\begin{eqnarray}
\bigl(\rho_\veps\,u_\veps\bigr)_\veps & \subset & L^\infty_T\bigl(L^2+L^{3/2}\bigr)\,\cap\,
\Bigl(L^\infty_T\bigl(L^2\bigr)+L^2_T\bigl(L^2\bigr)\Bigr) \nonumber \\
\bigl(D(\rho_\veps\,u_\veps)\bigr)_\veps & \subset & L^2_T\bigl(L^2+L^{3/2}\bigr)\,+\,L^\infty_T\bigl(L^1\bigr)\;\hra\;
L^2_T\bigl(L^1_{loc}\bigr)\,. \label{sing-b:D_rho-u}
\end{eqnarray}
In particular, we deduce that $\bigl(\rho_\veps\,u_\veps\bigr)_\veps$ is a bounded family in 
$L^\infty_T\bigl(L^{3/2}_{loc}\bigr)\,\cap\,L^2_T\bigl(L^2\bigr)$.

For the sake of completeness let us also establish uniform bounds on quantities related to $\rho^{3/2}_\veps\,u_\veps$.
First of all, we get
$$
\bigl(\rho^{3/2}_\veps\,u_\veps\bigr)_\veps\;\subset\;L^\infty_T\bigl(L^2+L^{3/2}\bigr)\,\cap\,
\Bigl(L^\infty_T\bigl(L^2\bigr)+L^2_T\bigl(L^2\bigr)\Bigr)\,;
$$
on the other hand, we have also the uniform embedding
\begin{equation} \label{sing-b:D(rho-u)}
\Bigl(D\bigl(\rho^{3/2}_\veps\,u_\veps\bigr)\Bigr)_\veps\;\subset\;L^2_T\bigl(L^2(\Omega)+L^{3/2}(\Omega)\bigr)\,.
\end{equation}
Therefore (see Theorem 2.40 of \cite{B-C-D}), we infer that
$\bigl(\rho^{3/2}_\veps\,u_\veps\bigr)_\veps$ is uniformly bounded in $L^2_T\bigl(L^3(\Omega)\bigr)$.

Finally, from this fact combined with the usual decomposition $\sqrt{\rho_\veps}\,=\,1+(\sqrt{\rho_\veps}-1)$
and Sobolev embeddings, it follows also that
$$ 
\bigl(\rho_\veps^2\,u_\veps\bigr)_\veps\;\subset\;L^2_T\bigl(L^2(\Omega)\bigr)\,.
$$ 

\subsection{Existence of weak solutions} \label{ss:weak}

At this point, let us spend a few words about the global in time existence of weak solutions to our system.

First of all, let us recall the definition. The integrability properties are justified by previous energy estimates.
\begin{defin} \label{d:weak}
Fix initial data $(\rho_0,u_0)$ satisfying the conditions in \eqref{eq:initial}, with $\rho_0\geq0$.

We say that $\bigl(\rho,u\bigr)$ is a \emph{weak solution} to system \eqref{eq:NSK-sing}-\eqref{eq:bc}
in $[0,T[\,\times\Omega$ (for some $T>0$) with initial datum $(\rho_0,u_0)$ if the following conditions are verified:
\begin{itemize}
 \item[(i)] $\rho\geq0$ almost everywhere, and one has that
 $\veps^{-1}\bigl(\rho-1\bigr)\,\in\,L^\infty\bigl([0,T[\,;L^\g(\Omega)\bigr)$,
$\veps^{-1}\bigl(1/\rho-1\bigr)\,\in\,L^\infty\bigl([0,T[\,;L^{\g_c}(\Omega)\bigr)$, $\veps^{-1}\nabla\rho$
and $\nabla\sqrt{\rho}\;\in L^\infty\bigl([0,T[\,;L^2(\Omega)\bigr)$ and
$\veps^{-1}\nabla^2\rho\in L^2\bigl([0,T[\,;L^2(\Omega)\bigr)$;
\item[(ii)] $\sqrt{\rho}\,u\,\in L^\infty\bigl([0,T[\,;L^2(\Omega)\bigr)$ and $\sqrt{\rho}\,Du\,\in L^2\bigl([0,T[\,;L^2(\Omega)\bigr)$;
\item[(iii)] the mass and momentum equations are satisfied in the weak sense: for any scalar function
$\phi\in\mc{D}\bigl([0,T[\,\times\Omega\bigr)$ one has the equality
$$
-\int^T_0\int_\Omega\biggl(\rho\,\d_t\phi\,+\,\rho\,u\,\cdot\,\nabla\phi\biggr)\,dx\,dt\,=\,\int_\Omega\rho_0\,\phi(0)\,dx\,,
$$
and for any vector-field $\psi\in\mc{D}\bigl([0,T[\times\Omega;\R^3\bigr)$ one has
\begin{eqnarray}
& & \hspace{-0.3cm}
\int_\Omega\rho_0u_0\cdot\psi(0)dx\,=\,\int^T_0\!\!\int_\Omega\biggl(-\rho u\cdot\d_t\psi\,-\,\rho u\otimes u:\nabla\psi\,-\,
\frac{1}{\veps^2}\Pi(\rho)\div\psi\,+ \label{eq:weak-momentum} \\
& & \qquad +\,\nu \rho Du:\nabla\psi\,+\,\frac{1}{\veps^2}\rho\Delta\rho\div\psi\,+\,
\frac{1}{\veps^2}\Delta\rho\nabla\rho\cdot\psi\,+\,\frac{\mf{c}(x^h)}{\veps}e^3\times\rho u\cdot\psi\biggr)\,dx\,dt\,; \nonumber
\end{eqnarray}
\item[(iv)] for almost every $t\in\,]0,T[\,$, the following energy inequalities hold true:
\begin{eqnarray} 
& & \hspace{-0.5cm} E_\veps[\rho,u](t)\,+\,\nu\int^t_0\int_\Omega\rho\,\left|Du\right|^2\,dx\,d\tau\;\leq\;E_\veps[\rho_0,u_0] \label{en_est:E} \\
& & \hspace{-0.5cm} F_\veps[\rho](t)\,+\,\dfrac{\nu}{\veps^2}\int^t_0\int_\Omega \Pi'(\rho)\,|\nabla\sqrt{\rho}|^2\,dx\,d\tau\,+\,
\dfrac{\nu}{\veps^{2}}\int^t_0\int_\Omega\left|\nabla^2\rho\right|^2\,dx\,d\tau\;\leq\;C\,(1+T)\,, \label{en_est:F}
\end{eqnarray}
for some constant $C$ depending just on $\bigl(E_\veps[\rho_{0},u_0],F_\veps[\rho_{0}],\nu\bigr)$.
\end{itemize}
\end{defin}

The last condition in the definition is required just in the study of the singular perturbation problem, because we have to assume \textsl{a priori} that
the family of weak solutions satisfies relevant uniform bonds.

The existence of weak solutions, in the sense of the previous definition, is guaranteed by the next result.
\begin{thm} \label{t:weak}
Let $\g_c=2$ in \eqref{eq:def_cold} and $\mf c\,\in\,W^{1,\infty}(\R^2)$ in \eqref{eq:coriolis}.
For any fixed $\veps>0$, consider a couple $(\rho_0,u_0)$ satisfying conditions \eqref{eq:initial}, with $\rho_0\geq0$.

Then, there exits a global in time weak solution $(\rho,u)$ to system \eqref{eq:NSK-sing}, 
related to the initial datum $(\rho_0,u_0)$. 
\end{thm}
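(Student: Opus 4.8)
The strategy is the classical one for degenerate compressible models: we obtain $(\rho,u)$ as a limit of solutions of a regularized system, the a priori bounds being precisely those of the classical energy balance (Proposition \ref{p:E}), the BD entropy estimate (Proposition \ref{p:F}) and the auxiliary inequalities of Paragraph \ref{sss:bounds}. Since $\veps>0$ is fixed here, all constants may depend on $\veps$; only uniformity with respect to the regularization parameters is needed.

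First I would introduce an approximation scheme in the spirit of Bresch--Desjardins (see \cite{B-D-L}, \cite{B-D-Zat} and the discussion in \cite{F_2015}): replace \eqref{eq:NSK-sing} by a system depending on several small parameters, combining a Faedo--Galerkin truncation of the velocity equation, an artificial parabolic regularization of the continuity equation (say a term $\mu\Delta\rho$) together with an artificial pressure, and drag-type terms such as $-\,r_0\,u\,-\,r_1\,\rho\,|u|^2\,u$. These ingredients force the approximate density to stay strictly positive and smooth and provide extra $L^2$-type control on the velocity, so that local-in-time smooth solutions exist by a fixed point argument; the Coriolis term $\veps^{-1}\mf{C}(\rho,u)$, being linear and skew-symmetric (it does no work), does not interfere with this step.

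Next I would propagate the two fundamental estimates to the approximate level. Testing the momentum equation by $u$ gives the energy identity of Proposition \ref{p:E}, the Coriolis contribution vanishing since $e^3\times\rho u\cdot u=0$; this yields uniform bounds on $\sqrt{\rho}\,u$, $\veps^{-1}\nabla\rho$, $\sqrt{\rho}\,Du$ and on $\veps^{-1}(\rho-1)$, $\veps^{-1}(1/\rho-1)$. The BD entropy estimate of Proposition \ref{p:F} is obtained by rewriting the momentum equation in terms of the effective velocity $u+\nu\nabla\log\rho$ and performing an energy estimate on it; here the precise compatibility between the choice $\nu_1(\rho)=\nu\rho$, $\nu_2\equiv0$ and the constant-capillarity Korteweg tensor is essential, and it is at this stage that the hypothesis $\nabla_h\mf c\in L^\infty$ is used, to absorb the rotation term. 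One thus controls $\nabla\sqrt{\rho}$ in $L^\infty_t L^2$ and $\veps^{-1}\nabla^2\rho$, $\veps^{-1}\nabla(\rho^{\g/2})$, $\veps^{-1}\nabla(1/\rho)$ in $L^2_{t,x}$, whence also the remaining bounds of Paragraph \ref{sss:bounds} on $\rho u$, $\rho^{3/2}u$ and $\rho^2 u$.

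Finally I would pass to the limit, removing the parameters successively. Strong convergence of the densities follows from the uniform bounds on $\rho$, $\nabla\rho$, $\nabla^2\rho$ and on $\d_t\rho=-\div(\rho u)$ (read off from the continuity equation) through an Aubin--Lions--Simon argument; this suffices to pass to the limit in the pressure and, after the integration by parts already used in \eqref{eq:weak-momentum}, in the capillary term $\veps^{-2}\rho\nabla\Delta\rho$, which becomes $\veps^{-2}\bigl(\rho\Delta\rho\,\div\psi+\Delta\rho\,\nabla\rho\cdot\psi\bigr)$, bilinear in strongly converging quantities. For the convective and viscous terms one writes $\rho u\otimes u=(\sqrt{\rho}u)\otimes(\sqrt{\rho}u)$ and $\rho\,Du$ in terms of $\sqrt{\rho}$ and $\sqrt{\rho}\,Du$, which requires strong convergence of $\sqrt{\rho}\,u$: this is obtained by combining the bounds of Paragraph \ref{sss:bounds} with a control of $\d_t(\rho u)$ in a negative-regularity space coming from the momentum equation, again through Aubin--Lions. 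The main obstacle is exactly this last point: the viscosity degenerates at vacuum, so no bound on $u$ itself is available on $\{\rho=0\}$, and one must exploit the additional regularity of $\rho$ furnished by the BD entropy (in three dimensions $\sqrt{\rho}\in L^\infty_t H^1_x$ gives $\rho\in L^\infty_t W^{1,3}_x$, with $\nabla^2\rho\in L^2$) to make sense of and stabilize all density-dependent quantities near $\{\rho=0\}$ — this is the compactness machinery for degenerate compressible systems of Bresch--Desjardins and Mellet--Vasseur. The Coriolis term $\veps^{-1}\mf c(x^h)\,e^3\times\rho u$ passes to the limit trivially once $\rho u$ converges and $\mf c\in L^\infty$, and the energy inequalities \eqref{en_est:E}--\eqref{en_est:F} follow from the approximate ones by weak lower semicontinuity.
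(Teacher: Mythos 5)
Your proposal is correct and follows essentially the same route as the paper, which itself omits the details and argues ``exactly as in'' \cite{B-D-Zat}: construct smooth solutions to an approximate system preserving the BD structure, derive the classical energy and BD entropy bounds (the Coriolis term being harmless for the energy and absorbed in the BD estimate thanks to $\nabla_h\mf{c}\in L^\infty$), and pass to the limit by compactness, the cold pressure with $\g_c=2$ and the extra regularity of $\rho$ handling the degeneracy at vacuum. Nothing in your sketch deviates from or conflicts with the paper's (cited) argument.
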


\begin{rem} \label{r:existence}
\begin{itemize}
 \item The hypothesis $\g_c=2$ is assumed just for simplicity here, but it is not really necessary for existence (see also comments below).
 \item The condition $\mf{c}\,\in\,W^{1,\infty}$ is important in order to take advantage of the BD entropy structure of our system.
However, it can be deeply relaxed at this level, in presence of further assumptions (e.g. when friction terms are considered, as in paper \cite{B-D_2003}).
\end{itemize}
\end{rem}

The previous result can be established arguing exactly as in \cite{B-D-Zat}. Actually, the statement of \cite{B-D-Zat} holds true under more general assumptions
than ours (as for the cold component of the pressure and the viscosity coefficient, for instance). We also refer to \cite{Mu-Po-Zat}-\cite{Mu-Po-Zat_2015}.

So, we omit the proof. It is based on a quite standard approximation procedure and passage to the limit: we construct smooth solutions to a perturbed system,
which however preserves the mathematical structure of the original one. We can then deduce existence of a weak solution to our system \eqref{eq:NSK-sing} as a limit of the previous family of
smooth approximate solutions.
In particular, this construction allows to justify the integrability properties we require in Definition \ref{d:weak}, see points (i), (ii) and (iv):
as a matter of fact, uniform bounds which follow from energy estimates, established in Subsection \ref{ss:cap-effects}, will be inherited also by the weak solutions.

\subsection{Effects due to fast rotation} \label{ss:rot-effects}

In the previous parts, we have proved the existence of weak solutions for our system, and the relevant bounds they enjoy.
In this subsection we identify properties related to the effects of a strong Coriolis force.
Namely, we will establish the analogue of the \emph{Taylor-Proudman theorem} in our context. On the other hand, as already pointed out, we ignore boundary layer effects.
Propagation of waves will be treated instead in Section \ref{s:proof}.

For doing this, let us fix a family $\bigl(\rho_\veps,u_\veps\bigr)_\veps$ of weak solutions to system \eqref{eq:NSK-sing}-\eqref{eq:bc},
related to the initial data $\bigl(\rho_{0,\veps}\,,\,u_{0,\veps}\bigr)_\veps$ such that $E_\veps[\rho_{0,\veps}\,,\,u_{0,\veps}]<+\infty$.

First of all, by uniform bounds we immediately deduce that $\rho_\veps\,\ra\,1$ (strong convergence)
in $L^\infty\bigl(\R_+;H^1(\Omega)\bigr)\,\cap\,L^2_{loc}\bigl(\R_+;H^2(\Omega)\bigr)$, with convergence rate $O(\veps)$.
So, we can write $\rho_\veps\,=\,1\,+\,\veps\,r_\veps$, with the family $\bigl(r_\veps\bigr)_\veps$ bounded in
the previous spaces. Then we infer that
\begin{equation} \label{eq:conv-r}
\qquad
r_\veps\,\rightharpoonup\,r\qquad\qquad\qquad\mbox{ in }\quad
L^\infty\bigl(\R_+;H^1(\Omega)\bigr)\,\cap\,L^2_{loc}\bigl(\R_+;H^2(\Omega)\bigr)\,.
\end{equation}

In the same way, if we define $a_\veps\,:=\,\bigl(1/\rho_\veps-1\bigr)/\veps$, we gather that $\bigl(a_\veps\bigr)_\veps$ is uniformly
bounded in $L^\infty\bigl(\R_+;L^2(\Omega)\bigr)\,\cap\,L^2_{loc}\bigl(\R_+;H^1(\Omega)\bigr)$. So it weakly converges
to some $a$ in this space: more precisely,
\begin{equation} \label{eq:conv-a}
\qquad
a_\veps\,\rightharpoonup\,-\,r\qquad\qquad\qquad\mbox{ in }\quad
L^\infty\bigl(\R_+;L^2(\Omega)\bigr)\,\cap\,L^2_{loc}\bigl(\R_+;H^1(\Omega)\bigr)\,.
\end{equation}

Again by uniform bounds, we also deduce
\begin{equation}
\qquad\qquad\qquad
u_\veps\,\rightharpoonup\,u\,\qquad\qquad\qquad\mbox{ in }\quad L^2_{loc}\bigl(\R_+;L^{3/2}_{loc}(\Omega)\bigr) \label{eq:conv-u}
\end{equation}
and $Du_\veps\,\rightharpoonup\,Du$ in $L^2_{loc}\bigl(\R_+;L^1_{loc}(\Omega)\bigr)$, where we have identified $(L^1)^*$ with
$L^\infty$.

Notice also that, by uniqueness of the limit, we have the additional properties
\begin{eqnarray*}
\sqrt{\rho_\veps}\,u_\veps\,\stackrel{*}{\rightharpoonup}\,u & \qquad\qquad\mbox{ in }\quad &  L^\infty\bigl(\R_+;L^2(\Omega)\bigr) \\[1ex]
\rho_\veps\,u_\veps\,\rightharpoonup\,u & \qquad\qquad\mbox{ in }\quad &  L^2_{loc}\bigl(\R_+;L^2(\Omega)\bigr) \\[1ex]
\sqrt{\rho_\veps}\,Du_\veps\,\rightharpoonup\,Du & \qquad\qquad\mbox{ in }\quad &  L^2\bigl(\R_+;L^2(\Omega)\bigr)\,,
\end{eqnarray*}
where $\stackrel{*}{\rightharpoonup}$ denotes the weak-$*$ convergence in $L^\infty\bigl(\R_+;L^2(\Omega)\bigr)$.

Finally, from the analysis of Paragraphs \ref{sss:BD} and \ref{sss:bounds}, we deduce some constraints the limit points $(r,u)$ have to satisfy.
This is exactly the analogue of the Taylor-Proudman theorem in our context.

\begin{prop} \label{p:TP}
Let $\bigl(\rho_\veps,u_\veps\bigr)_\veps$ be a family of weak solutions to system \eqref{eq:NSK-sing}-\eqref{eq:bc},
related to the initial data $\bigl(\rho_{0,\veps}\,,\,u_{0,\veps}\bigr)_\veps$ such that $E_\veps[\rho_{0,\veps}\,,\,u_{0,\veps}]<+\infty$.
Let us define $r_\veps:=\veps^{-1}\left(\rho_\veps-1\right)$, and let $(r,u)$ be a limit point of the sequence
$\bigl(r_\veps,u_\veps\bigr)_\veps$.

Then $r=r(x^h)$ and $u=\bigl(u^h(x^h),0\bigr)$, with $\div_{\!h}u^h=0$. Moreover, we have
$$
\mf{c}\,u^h\;=\,\nabla_h^\perp\bigl(\Id\,-\,\Delta_h\bigr)r\qquad\qquad\mbox{ and }\qquad\qquad
u^h\,\cdot\,\nabla_h\mf{c}\,\equiv\,0\,.
$$
\end{prop}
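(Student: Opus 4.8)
The plan is to pass to the limit $\veps\to0$ in the rescaled system \eqref{eq:NSK-sing}, testing the two equations against suitably scaled test functions and extracting constraints from the terms that blow up like $1/\veps$ or $1/\veps^2$. First I would treat the \emph{mass equation}: writing $\rho_\veps=1+\veps r_\veps$, the equation $\d_t\rho_\veps+\div(\rho_\veps u_\veps)=0$ gives $\veps\,\d_t r_\veps+\div(\rho_\veps u_\veps)=0$, so $\div(\rho_\veps u_\veps)=O(\veps)$ in the sense of distributions; since $\rho_\veps u_\veps\rightharpoonup u$ (by the bounds recalled before the statement), we pass to the limit and obtain $\div u=0$ in $\mc D'(\R_+\times\Omega)$. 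Combined with the boundary condition $(u\cdot n)_{|\d\Omega}=0$ this is the full incompressibility constraint.

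Next I would extract the geostrophic-type constraints from the \emph{momentum equation} \eqref{eq:mom-sing}. Multiplying through by $\veps$, every term except $\veps^{-1}\bigl(\nabla\Pi(\rho_\veps)-\rho_\veps\nabla\Delta\rho_\veps\bigr)+\mf c\,e^3\times\rho_\veps u_\veps$ is $O(\veps)$ in the relevant (negative-order) distributional spaces, using the uniform bounds of Paragraphs \ref{sss:BD} and \ref{sss:bounds} (in particular $\rho_\veps u_\veps\in L^2_T(L^2)$, $\rho_\veps Du_\veps$ bounded, and the $H^2$ bound on $\rho_\veps$). For the singular pressure–capillarity term I would Taylor-expand: $\Pi(\rho_\veps)=\Pi(1)+\Pi'(1)\veps r_\veps+O(\veps^2)=\veps r_\veps+O(\veps^2)$ since $\Pi'(1)=1$, and $\rho_\veps\nabla\Delta\rho_\veps=\veps\nabla\Delta r_\veps+O(\veps^2)$; hence $\veps^{-1}\bigl(\nabla\Pi(\rho_\veps)-\rho_\veps\nabla\Delta\rho_\veps\bigr)=\nabla r_\veps-\nabla\Delta r_\veps+o(1)=\nabla(\Id-\Delta)r_\veps+o(1)$. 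The remainder estimates here require a little care — one uses $\|\rho_\veps-1\|_{L^\infty_T(L^2)}=O(\veps)$ (Remark \ref{r:rho_L^2}), the $L^2_T(L^\infty)$ bound on $r_\veps$ from Corollary \ref{c:F}, and the $H^2$ bound — to control the quadratic-in-$r_\veps$ pieces; this is the main technical obstacle. Meanwhile $\mf c\,e^3\times\rho_\veps u_\veps\rightharpoonup\mf c\,e^3\times u$ since $\mf c\in L^\infty$ depends only on $x^h$. Passing to the limit yields, in $\mc D'$,
$$
\mf c\,e^3\times u\,=\,-\,\nabla(\Id-\Delta)r\,.
$$

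From this vector identity I would read off the structure. The right-hand side has zero third component, so $\d_3\bigl((\Id-\Delta)r\bigr)=0$; since $\Id-\Delta$ is invertible (with the vertical periodicity of Remark \ref{r:period-bc}), this forces $\d_3 r=0$, i.e. $r=r(x^h)$, and then $\Delta r=\Delta_h r$. Writing $e^3\times u=(-u^2,u^1,0)$ and matching horizontal components gives $\mf c\,u^h=\nabla_h^\perp(\Id-\Delta_h)r$ (with $\nabla_h^\perp=(-\d_2,\d_1)$), while the third component of the identity is $0=0$ but the third \emph{equation} — more precisely the structure of the limit momentum balance, together with $\rho_\veps\to1$ — combined with $\sqrt{\rho_\veps}u_\veps\rightharpoonup u$ and the vertical-stratification argument shows $u^3=0$; alternatively $u^3=0$ follows because incompressibility $\div_h u^h+\d_3 u^3=0$ together with $\d_3 r=0$ and the relation just derived gives $\d_3 u^3=0$, and the boundary condition $u^3_{|\d\Omega}=0$ (from $(u\cdot n)_{|\d\Omega}=0$) then forces $u^3\equiv0$. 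Finally, taking $\div_h$ of $\mf c\,u^h=\nabla_h^\perp(\Id-\Delta_h)r$: the right-hand side is a curl, so $\div_h(\mf c\,u^h)=0$, i.e. $\mf c\,\div_h u^h+u^h\cdot\nabla_h\mf c=0$; but $\div_h u^h=\div u=0$ (using $u^3=0$), whence $u^h\cdot\nabla_h\mf c\equiv0$, completing the proof.
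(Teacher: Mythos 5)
Your core limit computation is exactly the route the paper indicates (test the mass equation as it stands and the momentum equation against $\veps\psi$, i.e.\ multiplied by $\veps$): the uniform bounds kill all the $O(\veps)$ terms, the linearization of the pressure and capillarity terms around $\rho=1$ is legitimate with the bounds you quote, and you correctly arrive at $\div u=0$ and at the geostrophic balance $\mf{c}\,e^3\times u\,+\,\nabla\bigl(\Id-\Delta\bigr)r\,=\,0$, whence $\d_3\bigl((\Id-\Delta)r\bigr)=0$, so $r=r(x^h)$, and $\mf{c}\,u^h=\nabla_h^\perp\bigl(\Id-\Delta_h\bigr)r$. Up to this point the argument is sound and is the same as the paper's.

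The gap is in how you extract the remaining structural conclusions, which are part of the statement. You never prove that $u^h$ is independent of $x^3$ (the claim $u=\bigl(u^h(x^h),0\bigr)$), and your derivation of $u^3\equiv0$ is not an argument as written: the first version (``the vertical-stratification argument'') is a hand-wave, and the alternative is circular, since $\div_hu^h+\d_3u^3=0$ yields $\d_3u^3=0$ only if you already know $\div_hu^h=0$, which you obtain afterwards from $u^3=0$. The missing ingredient is to divide the geostrophic relation by $\mf{c}$: under the non-degeneracy assumption of the main theorem ($\mf{c}\neq0$ almost everywhere, which must be invoked here), $u^h=\mf{c}^{-1}\nabla_h^\perp\bigl(\Id-\Delta_h\bigr)r$ depends only on $x^h$. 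Then $\d_3u^3=-\div_hu^h(x^h)$ is independent of $x^3$, so $u^3$ is affine in $x^3$, and the boundary conditions $u^3=0$ at $x^3=0$ and $x^3=1$ force simultaneously $u^3\equiv0$ and $\div_hu^h=0$; finally $\div_h\bigl(\mf{c}\,u^h\bigr)=\div_h\nabla_h^\perp\bigl(\Id-\Delta_h\bigr)r=0$ gives $u^h\cdot\nabla_h\mf{c}\equiv0$, as you conclude. With this repair (and an explicit mention of where $\mf{c}\neq0$ is used) the proof is complete and follows the paper's own scheme.
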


\begin{rem} \label{r:limit-dens}
Notice that, by the previous proposition and the fact that $r$ and $u$ belongs to $L^\infty\bigl(\R_+;L^2(\Omega)\bigr)$, we actually get
$r\in L^\infty\bigl(\R_+;H^3(\Omega)\bigr)$.
\end{rem}

The proof of the previous proposition relies in testing the mass and momentum equations respectively on smooth functions $\vphi$ and $\veps\psi$.
From the equations for the velocity fields we then obtain the (quasi-)geostrophic balance relation.

Departures from geostrophy, which are determined by components of the solutions which do not respect the conditions found in the previous proposition, arise as superposition of waves.
By Proposition \ref{p:TP}, the wave propagator, i.e. the singular perturbation operator, can be defined as 
\begin{equation} \label{eq:sing-op}
\begin{array}{lccc}
\mc{A}\,: & L^2(\Omega)\;\times\;L^2(\Omega) & \longrightarrow & H^{-1}(\Omega)\;\times\;H^{-3}(\Omega) \\
& \bigl(\,r\;,\;V\,\bigr) & \mapsto & \Bigl(\div V\;,\;\mf{c}(x^h)\,e^3\times V\,+\nabla\bigl(\Id-\Delta\bigr)r\Bigr)\,.
\end{array}
\end{equation}
Remark that $\mc A$ 
has variable coefficients whenever $\mf c$ is a non-constant function.

\section{Asymptotic limits for capillary fluids in fast rotation} \label{s:asympt}

In the present section we 
perform the incompressible and fast rotation asymptotics simultaneously, while we keep the capillarity coefficient constant in
order to capture surface tension effects.

After spending a few words on the case of constant axis, we then specialize on the case of variable rotation axis.
We show the convergence of our system to a linear parabolic-type equation with variable coefficients.
Besides, here we look for minimal regularity assumptions on the variations of the axis, and we consider conditions of Zygmund-type for the function $\mf{c}$.

A more complete analysis and further results can be found in \cite{F_2015}-\cite{F_2016}.
\subsection{An overview of the constant rotation axis case} \label{ss:constant}

Let us consider the case $\mf c\equiv1$ first. This situation has been studied in \cite{F_2015}.

The analysis developed in Section \ref{s:math_prop} is almost enough to pass to the limit in the weak formulation of the equations,
when testing them on functions $(\vphi,\psi)$ in the kernel of the singular perturbation operator $\mc A$, defined in \eqref{eq:sing-op}.
The difficulty relies in passing to the limit in the convective and capillarity terms
$$
\rho_\veps\,u_\veps\otimes u_\veps\qquad\qquad\mbox{ and }\qquad\qquad \frac{1}{\veps^2}\,\Delta\rho\,\nabla\rho\,.
$$

As usual in singular perturbation problems, writing the equations (roughly speaking) in the form of a wave system
$$
\veps\,\d_tU_\veps\,+\,\mc{A}\,U_\veps\,=\,\veps\,F_\veps\,,
$$
one expects that components of the solutions in the kernel of $\mc A$, say $\mbb{P}U_\veps$, strongly converge (in suitable spaces) to a solution of the target system.
On the other hand, according to the previous equation, the projections onto the orthogonal complement of ${\rm Ker}\,\mc A$, say $\mbb{Q}U_\veps$, produce fast oscillations,
which therefore should weakly converge to $0$. In order to treat non-linearities, then, the first approach is to look for strong convergence properties, and especially in this case for
dispersion of the components in $\bigl({\rm Ker}\,\mc A\bigr)^\perp$.

The fact that $\mc A$ has constant coefficients for $\mf c\equiv1$ simplifies the study of dispersive properties, since spectral analysis tools are available.
Direct computations show that the point spectrum of $\mc A$ reduces to the only eigenvalue $0$: therefore, one wants to use the celebrated RAGE theorem to prove
dispersion of $\mbb{Q}U_\veps$ in suitable norms.

Nevertheless, RAGE theorem is not directly applicable, since $\mc A$ is not skew-adjoint with respect to the classical $L^2$ scalar product. Then, after having localized in frequencies,
the idea is to resort to microlocal symmetrization arguments, in order to define a scalar product with respect to which $\mc A$ is skew-adjoint, and then RAGE theorem can be applied.
We remark that the symmetrizer involves a loss of derivatives for the density component, but this loss is safely handled because of the frequency localization
(notice however that one disposes of additional regularity for the density, provided by BD entropy estimates).

Finally, without entering into the details (for instance, precise assumptions on the initial data will be made in Subsection \ref{ss:var-result} below),
one can prove a result of the following type.

\begin{thm} \label{th:sing}
Let $\bigl(\rho_\veps\,,\,u_\veps\bigr)_\veps$ be a family of weak solutions (in the sense of Definition \ref{d:weak} above) to system
\eqref{eq:NSK-sing}-\eqref{eq:bc} in $[0,T]\times\Omega$, related to suitable initial data
$\bigl(\rho_{0,\veps},u_{0,\veps}\bigr)_\veps$. Suppose that the symmetriy properties of Remark \ref{r:period-bc} are verified.
Define $r_\veps\,:=\,\veps^{-1}\left(\rho_\veps-1\right)$.

Then, up to the extraction of a subsequence, one has the convergence properties
\begin{itemize}
\item[(a)] ~$r_\veps\,\rightharpoonup\,r$ in $L^\infty\bigl([0,T];H^1(\Omega)\bigr)\,\cap\,L^2\bigl([0,T];H^2(\Omega)\bigr)$;
\item[(b)] ~$\sqrt{\rho_\veps}\,u_\veps\,\rightharpoonup\,u$ in $L^\infty\bigl([0,T];L^2(\Omega)\bigr)$ and
$\sqrt{\rho_\veps}\,Du_\veps\,\rightharpoonup\,Du$ in $L^2\bigl([0,T];L^2(\Omega)\bigr)$;
\item[(c)] ~$r_\veps\,\ra\,r$ and $\rho_\veps^{3/2}\,u_\veps\,\ra\,u$ (strong convergence) in $L^2\bigl([0,T];L^2_{loc}(\Omega)\bigr)$,
\end{itemize}
where $r$ and $u$ are linked by the relation found in Proposition \ref{p:TP} above. Moreover,
$r$ solves (in the weak sense) the modified Quasi-Geostrophic equation
\begin{equation} \label{eq:q-geo_0}
\d_t\Bigl(\bigl(\Id-\Delta_h+\Delta_h^2\bigr)r\Bigr)\,+\,\nabla^\perp_h\bigl(\Id-\Delta_h\bigr)r\,\cdot\,\nabla_h\Delta_h^2r\,+\,
\frac{\nu}{2}\,\Delta_h^2\bigl(\Id-\Delta_h\bigr)r\,=\,0
\end{equation}
supplemented with the initial condition $r_{|t=0}\,=\,\wtilde{r}_0$, where $\wtilde{r}_0\,\in\,H^3(\R^2)$ is the unique solution of
$$
\bigl(\Id-\Delta_h+\Delta_h^2\bigr)\,\wtilde{r}_0\,=\,\int_0^1\bigl(\omega^3_0\,+\,r_0\bigr)\,dx^3\,,
$$
with $r_0$ and $u_0$ defined as the weak limits (up to extraction) of the initial data and $\omega_0\,=\,\nabla\times u_0$ the vorticity of $u_0$.
\end{thm}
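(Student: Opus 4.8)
The plan is to follow the standard strategy for singular perturbation problems of this type, combining compactness with a dispersive estimate. First I would collect all the uniform bounds from Subsection \ref{ss:cap-effects}: the classical energy estimates (Corollary \ref{c:E}) give control of $\sqrt{\rho_\veps}u_\veps$ and $\veps^{-1}\nabla\rho_\veps$ in $L^\infty_T(L^2)$ and of $\sqrt{\rho_\veps}Du_\veps$ in $L^2_T(L^2)$, while the BD entropy estimates (Corollary \ref{c:F}) give the extra regularity $\veps^{-1}\nabla^2\rho_\veps\in L^2_T(L^2)$ and the bounds on $\rho_\veps^{3/2}u_\veps$ and $\rho_\veps^2 u_\veps$ from Paragraph \ref{sss:bounds}. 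From these I extract the weak limits $(r,u)$ as in \eqref{eq:conv-r}--\eqref{eq:conv-u}, and Proposition \ref{p:TP} (the Taylor--Proudman constraint, with $\mf c\equiv1$) tells me that $r=r(x^h)$, $u=(u^h(x^h),0)$ with $\div_h u^h=0$ and $u^h=\nabla_h^\perp(\Id-\Delta_h)r$ — which also upgrades $r$ to $L^\infty_T(H^3)$ as in Remark \ref{r:limit-dens}. This gives items (a) and (b).

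Next I would establish the strong convergence in (c), which is where the dispersive analysis enters. Writing $U_\veps=(r_\veps, \rho_\veps u_\veps)$ (or a suitable variant), the equations take the form $\veps\d_t U_\veps + \mc A U_\veps = \veps F_\veps$ with $F_\veps$ uniformly bounded in a space of negative regularity, and $\mc A$ the constant-coefficient operator \eqref{eq:sing-op}. I split $U_\veps = \mbb P U_\veps + \mbb Q U_\veps$ along $\mathrm{Ker}\,\mc A$ and its orthogonal complement. For $\mbb P U_\veps$, an Aubin--Lions type argument (using the equation to control time derivatives in a weak norm against the spatial compactness coming from the $H^2$ bound on $\rho_\veps$) yields strong convergence locally. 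For $\mbb Q U_\veps$ I would follow the RAGE-theorem route sketched in the text: localize in frequency, symmetrize $\mc A$ microlocally so that it becomes skew-adjoint for a modified scalar product (absorbing the derivative loss on the density thanks to the BD regularity), and deduce that $\mbb Q U_\veps \to 0$ strongly in $L^2_T(L^2_{loc})$. Combining the two pieces, and handling the difference between $\sqrt{\rho_\veps}u_\veps$, $\rho_\veps u_\veps$, $\rho_\veps^{3/2}u_\veps$ via the decomposition $\sqrt{\rho_\veps}=1+(\sqrt{\rho_\veps}-1)$ and Remark \ref{r:rho_L^2}, gives (c).

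With strong convergence in hand I would pass to the limit in the weak formulation \eqref{eq:weak-momentum}, testing against divergence-free, vertically-independent test functions of the form $\psi=\nabla_h^\perp\phi(x^h)$ that annihilate the fast components — i.e. test functions in $\mathrm{Ker}\,\mc A$. The convective term $\rho_\veps u_\veps\otimes u_\veps$ and the capillarity term $\veps^{-2}\Delta\rho_\veps\nabla\rho_\veps = \Delta r_\veps\nabla r_\veps$ converge by the strong convergence of $\rho_\veps^{3/2}u_\veps$ and of $r_\veps$ respectively; the pressure and the $\veps^{-1}$ Coriolis term are absorbed against the test function structure (they vanish on $\mathrm{Ker}\,\mc A$), and the viscous term produces the $\frac{\nu}{2}\Delta_h^2(\Id-\Delta_h)r$ dissipation after using $u^h=\nabla_h^\perp(\Id-\Delta_h)r$. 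Rewriting everything through this relation gives the scalar equation \eqref{eq:q-geo_0} for $r$. Finally I would identify the initial datum by integrating the vorticity formulation of the momentum equation over $x^3\in[0,1]$, passing to the limit in the resulting linear relation, and solving the elliptic problem $(\Id-\Delta_h+\Delta_h^2)\wtilde r_0=\int_0^1(\omega_0^3+r_0)\,dx^3$ in $H^3(\R^2)$.

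The main obstacle I expect is the dispersive/strong-convergence step for $\mbb Q U_\veps$: because $\mc A$ is not skew-adjoint in $L^2$ and the capillarity introduces third-order terms, one must carefully set up the microlocal symmetrizer and track the derivative loss on the density, using precisely the BD-entropy gain of regularity to close the estimate — and then feed the resulting decay into the nonlinear terms, which are only bounded in rather weak local spaces. The passage to the limit in the capillarity term $\Delta r_\veps\nabla r_\veps$, quadratic in the density fluctuation, also requires the strong $L^2_{loc}$ convergence of $r_\veps$ rather than mere weak convergence, so the two difficulties are intertwined.
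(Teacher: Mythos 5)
Your proposal follows essentially the same route the paper indicates for Theorem \ref{th:sing}: uniform bounds from the energy and BD entropy estimates, the Taylor--Proudman constraints of Proposition \ref{p:TP} for the weak limit, strong convergence of the non-kernel components via frequency localization, microlocal symmetrization and the RAGE theorem (with the density derivative loss absorbed by the BD regularity), and then passage to the limit in the weak formulation tested on ${\rm Ker}\,\mc A$, including the identification of the initial datum through the averaged vorticity relation. This is precisely the strategy of \cite{F_2015} as sketched in Subsection \ref{ss:constant}, so no objections.
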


\subsection{The result for variable rotation axis} \label{ss:var-result}


We get now interested in studying the incompressible and high rotation limit simultaneously, in the regime of constant capillarity and for effectively variable
rotation axis, i.e. when the function $\mf{c}$ is non-constant.

Here we will consider the general instance of \emph{ill-prepared} initial data
$\bigl(\rho,u\bigr)_{|t=0}=\bigl(\rho_{0,\veps},u_{0,\veps}\bigr)$. Namely, we will suppose
the following assumptions: 
\begin{itemize}
\item[(i)] ~$\rho_{0,\veps}\,=\,1\,+\,\veps\,r_{0,\veps}$, with
$\bigl(r_{0,\veps}\bigr)_\veps\,\subset\,H^1(\Omega)\cap L^\infty(\Omega)$ bounded;
\item[(ii)] ~$1/\rho_{0,\veps}\,=\,1\,+\,\veps\,a_{0,\veps}$, with
$\bigl(a_{0,\veps}\bigr)_\veps\,\subset\,L^2(\Omega)$ bounded;
\item[(iii)] ~~$\bigl(u_{0,\veps}\bigr)_\veps\,\subset\,L^2(\Omega)$ bounded.
\end{itemize}
Up to extraction of a subsequence, we can suppose to have the weak convergence properties
\begin{equation} \label{eq:conv-initial}
r_{0,\veps}\,\rightharpoonup\,r_0\quad\mbox{ in }\;H^1(\Omega)\;,\qquad
a_{0,\veps}\,\rightharpoonup\,a_0\,=\,-\,r_0\;\mbox{ and }\;
u_{0,\veps}\,\rightharpoonup\,u_0\quad\mbox{ in }\;L^2(\Omega)\,.
\end{equation}

Remark that the previous assumptions respect what we required in Remark \ref{r:en_initial}, see Subsection \ref{ss:hypotheses}.
We also recall that it is at this point that we need condition (iv) of Definition \ref{d:weak}.

Let us turn our attention to the function $\mf c$.
For technical reason, analogously to what done in \cite{G-SR_2006}, we need to assume that it has non-degenerate
critical points: namely, we will suppose
\begin{equation} \label{eq:non-crit}
\lim_{\delta\ra0}\;\mc{L}\left(\left\{x^h\,\in\,\R^2\;\Bigl|\;\bigl|\nabla_h\mf{c}(x^h)\bigr|\,\leq\,\delta\right\}\right)\,=\,0\,,
\end{equation}
where we denoted by $\mc L(\mc O)$ the $2$-dimensional Lebesgue measure of a set $\mc O\,\subset\R^2$.

Also, fixed an admissible modulus of continuity $\mu$ (we will recall the precise definition in Paragraph \ref{sss:continuity}), we require that $\nabla_h\mf{c}$ belongs to the space
\begin{equation} \label{def:Z}
\mc{Z}_\mu(\R^d)\,:=\,\biggl\{a\,\in\,L^\infty(\R^d;\R)\;\biggl|\;\sup_{x\in\R^d}\left|a(x+y)+a(x-y)-2a(x)\right|\,\leq\,C\,\mu(|y|)\quad\forall\;|y|\leq1\biggr\}\,.
\end{equation}
Let us set $|a|_{\mc{Z}_\mu}$ as the smallest constant $C$ such that the previous inequality holds true, and
$\|a\|_{\mc{Z}_\mu}\,:=\,\|a\|_{L^\infty}+|a|_{\mc{Z}_\mu}$.

Finally,for notation convenience, we introduce the operator
$$
\mf{D}_{\mf{c}}(f)\,:=\,D_h\bigl(\mf{c}^{-1}\,\nabla_h^\perp f\bigr)\,=\,\frac{1}{2}\,\left(\nabla_h\,+\,^t\nabla_h\right)\bigl(\mf{c}^{-1}\,\nabla_h^\perp f\bigr)
$$
for any scalar function $f\,=\,f(x^h)$.

\begin{thm} \label{t:sing_var}
Let $1<\g\leq2$ in \eqref{eq:def_P} and $\mf{C}(\rho,u)=\mf{c}(x^h)\,e^3\times\rho\,u$,
where $\mf{c}\in W^{1,\infty}(\R^2)$ is $\neq0$ almost everywhere and it verifies the non-degeneracy condition \eqref{eq:non-crit}.
Let us also assume that $\nabla_h\mf{c}\,\in\,\mc{Z}_\mu$, for some admissible modulus of continuity $\mu$ which verifies the property
\begin{equation} \label{cond:mu}
\wtilde{\mu}(s)\,:=\,\mu(s)\,\log\!\left(1+\frac{1}{s}\right)\;\longrightarrow\;0\qquad\qquad\mbox{ for }\quad s\ra0\,.
\end{equation}

Let $\bigl(\rho_{0,\veps},u_{0,\veps}\bigr)_\veps$ be initial data satisfying the hypotheses ${\rm (i)-(ii)-(iii)}$ and
\eqref{eq:conv-initial}, and let $\bigl(\rho_\veps\,,\,u_\veps\bigr)_\veps$ be a family of corresponding weak solutions to system
\eqref{eq:NSK-sing}-\eqref{eq:bc} in $[0,T]\times\Omega$, in the sense of Definition \ref{d:weak}. Suppose that the symmetriy properties of Remark \ref{r:period-bc} are verified.
Define $r_\veps\,:=\,\veps^{-1}\left(\rho_\veps-1\right)$.

Then, up to the extraction of a subsequence, one has the following convergence properties: 
\begin{itemize}
\item[(a)] ~$r_\veps\,\rightharpoonup\,r$ in $L^\infty\bigl([0,T];H^1(\Omega)\bigr)\,\cap\,L^2\bigl([0,T];H^2(\Omega)\bigr)$,
\item[(b)] ~$\sqrt{\rho_\veps}\,u_\veps\,\rightharpoonup\,u$ in $L^\infty\bigl([0,T];L^2(\Omega)\bigr)$ and
 $\sqrt{\rho_\veps}\,Du_\veps\,\rightharpoonup\,Du$ in $L^2\bigl([0,T];L^2(\Omega)\bigr)$,
\end{itemize}
where $r$ and $u$ verify the relations established in Proposition \ref{p:TP}. Moreover,
$r$ solves (in the weak sense) the equation
\begin{equation} \label{eq:lim_var}
\d_t\left(r\,-\,\div_{\!h}\!\left(\frac{1}{\mf{c}^2}\,\nabla_h\bigl(\Id-\Delta_h\bigr)r\right)\right)\,+\,
\nu\,\;^t\mf{D}_{\mf{c}}\,\circ\,\mf{D}_{\mf{c}}\bigl((\Id-\Delta_h)r\bigr)\,=\,0
\end{equation}
supplemented with the initial condition $r_{|t=0}\,=\,\wtilde{r}_1$, where $\wtilde{r}_1$ is defined by
$$
\wtilde{r}_1\,-\,\div_{\!h}\!\left(\frac{1}{\mf{c}^2}\,\nabla_h\bigl(\Id-\Delta_h\bigr)\wtilde{r}_1\right)\,=\,
\int_0^1\Bigl({\rm curl}_h\bigl(\mf{c}^{-1}\,u^h_0\bigr)\,+\,r_0\Bigr)\,dx^3\,.
$$
\end{thm}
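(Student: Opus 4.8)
The plan is to follow the by-now classical strategy for singular limits of rotating fluids, adapted to the variable-axis, capillary setting: establish uniform bounds, extract weak limits, identify the constraint space (Taylor--Proudman), then pass to the limit in the weak formulation by testing against well-chosen test functions adapted to the kernel of $\mc A$, the whole difficulty being the treatment of the oscillatory (non-geostrophic) part of the solution and of the nonlinear terms.

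First I would collect the uniform bounds. From Corollary \ref{c:E}, Corollary \ref{c:F} and the additional bounds of Paragraph \ref{sss:bounds}, the family $\bigl(r_\veps,u_\veps\bigr)_\veps$ is bounded in the spaces appearing in \eqref{eq:conv-r}--\eqref{eq:conv-u}; in particular, up to a subsequence, $r_\veps\rightharpoonup r$ and $u_\veps\rightharpoonup u$ in the sense of points (a)--(b), and $\rho_\veps\to1$ strongly with rate $O(\veps)$. Proposition \ref{p:TP} then fixes the structure of the limit: $r=r(x^h)$, $u=(u^h(x^h),0)$ with $\div_h u^h=0$, together with the geostrophic balance $\mf c\,u^h=\nabla_h^\perp(\Id-\Delta_h)r$ and the transport constraint $u^h\cdot\nabla_h\mf c\equiv0$. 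Solving the first relation for $u^h$ (using $\mf c\neq0$ a.e.) gives $u^h=\mf c^{-1}\nabla_h^\perp(\Id-\Delta_h)r$, so the limit is entirely described by the single scalar $r$, and Remark \ref{r:limit-dens} upgrades its regularity to $r\in L^\infty(\R_+;H^3)$.

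Next I would derive the limit equation. The natural test functions are those $(\vphi,\psi)$ in (a suitable dense subset of) $\ker\mc A$, i.e. $\psi=\mf c^{-1}\nabla_h^\perp\phi$ with $\phi=\phi(x^h)$ and $\vphi=(\Id-\Delta)\phi$ — precisely the profiles that annihilate the stiff part of the momentum equation; this is where the operator $\mf D_{\mf c}$ enters, since $\nabla\psi$ symmetrized produces $\mf D_{\mf c}(\phi)$, and the viscous term $\nu\int\rho_\veps Du_\veps:\nabla\psi$ converges to $\nu\int \mf D_{\mf c}\bigl((\Id-\Delta_h)r\bigr):\mf D_{\mf c}(\phi)$, i.e. the term $\nu\,{}^t\mf D_{\mf c}\circ\mf D_{\mf c}\bigl((\Id-\Delta_h)r\bigr)$ of \eqref{eq:lim_var}. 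Testing the mass equation and the momentum equation against these profiles, combining the two (the combination $r-\div_h(\mf c^{-2}\nabla_h(\Id-\Delta_h)r)$ appearing in \eqref{eq:lim_var} is exactly the quantity dual to $\phi$ under this pairing), and passing $\veps\to0$, the stiff $\veps^{-1}$ and $\veps^{-2}$ terms drop out against $\ker\mc A$, the $\d_t$ terms give the time-derivative term, the viscous term gives the parabolic term, and one is left with the convective term $\rho_\veps u_\veps\otimes u_\veps$ and the capillarity term $\veps^{-2}\Delta\rho_\veps\nabla\rho_\veps=\nabla(\text{l.o.t.})+r_\veps\nabla\Delta_h r_\veps+\dots$; here one must show these converge to the corresponding quadratic expressions in $u^h=\mf c^{-1}\nabla_h^\perp(\Id-\Delta_h)r$. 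For the initial datum, testing the $\veps$-regularized mass/momentum system at $t=0$ against $\phi$ and passing to the limit produces the elliptic problem defining $\wtilde r_1$ in terms of $\int_0^1(\mathrm{curl}_h(\mf c^{-1}u_0^h)+r_0)\,dx^3$.

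The hard part will be the passage to the limit in the nonlinear terms, because the oscillatory component $\mbb Q U_\veps$ (the part of $(r_\veps,u_\veps)$ living in $(\ker\mc A)^\perp$) does not vanish and a priori interacts with itself and with $\mbb P U_\veps$. Unlike the constant-axis case of Theorem \ref{th:sing}, $\mc A$ has variable coefficients, so RAGE/dispersion arguments are not directly available; instead I would exploit the non-degeneracy condition \eqref{eq:non-crit} on the critical points of $\mf c$ together with the Zygmund regularity $\nabla_h\mf c\in\mc Z_\mu$ and the logarithmic condition \eqref{cond:mu} to show, by a compensated-compactness / regularization argument on the weak formulation of the wave system $\veps\d_t U_\veps+\mc A U_\veps=\veps F_\veps$ (in the spirit of \cite{G-SR_2006}, \cite{F_2016}), that the only contribution of the oscillations to the limit of the quadratic terms is either zero or an exact gradient (hence invisible in the weak formulation against divergence-free, $\ker\mc A$-type test functions). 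Controlling the commutator/remainder terms in this step is exactly where the modulus-of-continuity hypothesis \eqref{cond:mu} is needed — it is the refinement over \cite{F_2016} — and making that estimate work is the crux of the proof; the rest is a matter of assembling the uniform bounds and identifying limits.
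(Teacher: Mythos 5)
Your overall framework is the right one, and it matches the paper's: uniform bounds and weak limits, the constraints of Proposition \ref{p:TP}, testing the weak formulation \eqref{eq:sing_weak} on $(\phi,\psi)\in{\rm Ker}\,\mc A$, handling the singular rotation term through the mass equation tested on $(\Id-\Delta_h)\phi$, regularizing by the cut-off $S_M$, and you correctly locate the one place where the Zygmund hypothesis and condition \eqref{cond:mu} enter, namely the commutator estimate of Lemma \ref{l:comm-zyg} used to control $g_{\veps,M}=[\mf c,S_M](e^3\times V_\veps)$ in Proposition \ref{p:regular}. The genuine gap is in your plan for the nonlinear terms. You propose to show that the oscillatory component contributes ``zero or an exact gradient'', so that $\rho_\veps u_\veps\otimes u_\veps$ and $\veps^{-2}\Delta\rho_\veps\nabla\rho_\veps$ converge to the corresponding quadratic expressions in the limit profile $u^h=\mf c^{-1}\nabla_h^\perp(\Id-\Delta_h)r$. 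This cannot be the right statement: if the quadratic terms passed to the limit as quadratic expressions of the limit profile, the limit equation would retain a transport term as in the constant-axis case \eqref{eq:q-geo_0}, whereas \eqref{eq:lim_var} is linear. Moreover, with variable $\mf c$ there is no dispersion/RAGE mechanism making the oscillations disappear, and the compensated compactness argument does not prove that they do; their self-interaction genuinely contributes to the limit of the quadratic terms.

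What is actually needed --- and what the paper's proof does --- is an exact cancellation between the two nonlinearities. After regularization, the convective term is split into the vertical-average part $\mc T^1_{\veps,M}$ and the mean-free part $\mc T^2_{\veps,M}$, and each is rewritten by means of the approximate wave system \eqref{reg:approx-w} and its mean-free/curl version \eqref{eq:mean-free}: modulo perfect gradients (killed since $\div\psi=0$), remainders in the sense of \eqref{eq:remainder}, and terms treated by decomposing $\lan V^h_{\veps,M}\ran$ along $\{\nabla_h\mf c,\nabla_h^\perp\mf c\}$ away from critical points (the cut-off $b_M$ built on hypothesis \eqref{eq:non-crit}, plus the fact that the $\nabla_h\mf c$-direction components vanish against kernel test functions), one finds \eqref{eq:T^1_final} and \eqref{eq:T^2_final}, i.e. each reduces to a density-quadratic term of the form $(\Id-\Delta)r_{\veps,M}\,\nabla r_{\veps,M}$. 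These cancel exactly against the analogous expression \eqref{comp-cpt:dens} produced by the capillarity term, so the total nonlinear contribution tends to zero and the limit equation is linear. It is this cancellation --- not the vanishing of the oscillations --- that is the crux; without it, your argument either produces the wrong (nonlinear) limit equation or relies on the unproven claim that the quadratic expressions in the limit profile are invisible to every test function in ${\rm Ker}\,\mc A$.
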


Notice that we have the identity
$$
\,^t\mf{D}_{\mf{c}}\,\circ\,\mf{D}_{\mf{c}}(f)\,=\,
\nabla_h^\perp\cdot\left(\frac{1}{\mf{c}}\,\nabla_h\cdot\mf{D}_{\mf{c}}(f)\right)\,,
$$
where we used the notations $\div f$ and $\nabla\cdot f$ in an equivalent way.
We also remark here that, for $\mf{c}\equiv1$, this operator reduces to $(1/2)\Delta_h^2f$, according to Theorem \ref{th:sing}.

\begin{rem} \label{r:var_axis}
The fact that the limit equation is linear is remarkable.
As already pointed out in \cite{G-SR_2006} and \cite{F-G-GV-N}, this corresponds to a sort of turbulent behaviour of the fluid, where all the scales are mixed and
one can identify just an average horizontal motion.

From the technical viewpoint, the motivation is that, in the case of variable rotation axis, the limit motion is much more constrained than for constant axis; correspondingly,
the kernel of the singular perturbation operator is smaller.
\end{rem}

\section{Proof of the main result} \label{s:proof}

In the present section we prove our main result, namely Theorem \ref{t:sing_var}.
It relies on compensated compactness arguments, firstly introduced by Lions and Masmoudi \cite{L-M}-\cite{L-M_CRAS} in the context of incompressible limit,
and later adapted by Gallagher and Saint-Raymond \cite{G-SR_2006} to the case of rotating fluids (see also \cite{F-G-GV-N}).

First of all, we give some insights on Zygmund conditions and the regularity hypothesis on the rotation function $\mf{c}$. Then, we present how passing to the limit in the weak formulation
of the equations, and we finally derive the limit system.

\subsection{Moduli of continuity and Zygmund conditions} \label{ss:moduli}

The present subsection is devoted to present in detail the regularity class which the rotation coefficient belongs to.

First of all, we recall some basic notions and properties related to admissible moduli of continuity. Then, we switch to the analysis of Zygmund type conditions;
we conclude with a fundamental lemma, which allows us to prove Theorem \ref{t:sing_var}.

We are going to make a broad use of tools from Fourier Analysis, and especially Littlewood-Paley theory.
For the sake of conciseness, we do not present the details here, and we refer e.g. to Chapter 2 of \cite{B-C-D}. 

Furthermore, for simplicity of exposition  we will deal with the $\R^d$ case; however, everything can be adapted to the $d$-dimensional
torus $\mbb{T}^d$, and then also to the case of $\R^{d_1}\times\mbb{T}^{d_2}$.

\subsubsection{Admissible moduli of continuity} \label{sss:continuity}

In this paragraph we recall some fundamental definitions and properties about general moduli of continuity.

First of all, let us introduce the \emph{Littlewood-Paley decomposition}, based on a non-homogeneous dyadic partition of unity in the
Phase Space.

We fix a smooth radial function $\chi$ supported in the ball $B(0,2)$,  equal to $1$ in a neighborhood of $B(0,1)$
and such that $r\mapsto\chi(r\,e)$ is nonincreasing over $\R_+$ for all unitary vectors $e\in\R^d$. Set
$\varphi\left(\xi\right)=\chi\left(\xi\right)-\chi\left(2\xi\right)$ and $\vphi_j(\xi):=\vphi(2^{-j}\xi)$ for all $j\geq0$.

The dyadic blocks $(\Delta_j)_{j\in\Z}$ are defined by\footnote{Throughout we agree  that  $f(D)$ stands for 
the pseudo-differential operator $u\mapsto\mc{F}^{-1}(f\,\mc{F}u)$.} 
$$
\Delta_j:=0\ \hbox{ if }\ j\leq-2,\quad\Delta_{-1}:=\chi(D)\qquad\mbox{ and }\qquad
\Delta_j:=\varphi(2^{-j}D)\; \mbox{ if }\;  j\geq0\,.
$$
We  also introduce the low frequency cut-off operators: for any $j\geq0$,
\begin{equation} \label{eq:low-freq}
S_ju\;:=\;\chi\bigl(2^{-j}D\bigr)u\;=\;\sum_{k\leq j-1}\Delta_{k}u\,.
\end{equation}

Let us now present a basic definition.
\begin{defin} \label{d:mod-cont}
A \emph{modulus of continuity} is a continuous non-decreasing function $\mu:[0,1]\,\longrightarrow\,\R_+$ such that $\mu(0)=0$.

It is said to be \emph{admissible} if the function $\Gamma_\mu$, defined by the relation
$$
\Gamma_\mu(s)\,:=\,s\,\mu(1/s)\,,
$$
is non-decreasing on $[1,+\infty[\,$ and it verifies, for some constant $C>0$ and any $s\geq1$,
$$
\int_s^{+\infty}\sigma^{-2}\,\Gamma_\mu(\sigma)\,d\sigma\,\leq\,C\,s^{-1}\,\Gamma_\mu(s)\,.
$$
\end{defin}

Given a modulus of continuity $\mu$, we can define the space $\mc{C}_\mu(\R^d)$ as the set of real-valued functions $a\in L^\infty(\R^d)$
such that
$$
|a|_{\mc{C}_\mu}\,:=\,\sup_{|y|\in\,]0,1]}\frac{\left|a(x+y)\,-\,a(x)\right|}{\mu(|y|)}\,<\,+\infty\,.
$$
We also define $\|a\|_{\mc{C}_\mu}\,:=\,\|a\|_{L^\infty}\,+\,|a|_{\mc{C}_\mu}$.

On the other hand, for an increasing $\Gamma$ on $[1,+\infty[\,$, we define the space $B_\Gamma(\R^d)$ as the set
of real-valued functions $a\in L^\infty(\R^d)$ such that
$$
|a|_{B_\Gamma}\,:=\,\sup_{j\geq0}\frac{\left\|\nabla S_ja\right\|_{L^\infty}}{\Gamma(2^j)}\,<\,+\infty\,,
$$
where $S_j$ is the low-frequency cut-off operator of a Littlewood-Paley decomposition, as introduced above.
We also set $\|a\|_{B_\Gamma}\,:=\,\|a\|_{L^\infty}\,+\,|a|_{B_\Gamma}$.

One has the following result (see Proposition 2.111 of \cite{B-C-D}).
\begin{prop} \label{p:cont-equiv}
Let $\mu$ be an admissible modulus of continuity. Then $\mc{C}_\mu(\R^d)\,=\,B_{\Gamma_\mu}(\R^d)$,
and the respective norms are equivalent. Moreover, for any $a\in\mc{C}_\mu(\R^d)$ one has
$$
\left\|\Delta_ja\right\|_{L^\infty}\,\leq\,C\,\mu(2^{-j})
$$
for all $j\geq-1$, where the constant $C$ just depend on $\|a\|_{\mc{C}_\mu}$.
\end{prop}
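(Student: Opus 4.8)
The plan is to establish the two continuous inclusions $\mc{C}_\mu(\R^d)\hookrightarrow B_{\Gamma_\mu}(\R^d)$ and $B_{\Gamma_\mu}(\R^d)\hookrightarrow\mc{C}_\mu(\R^d)$ separately, and to obtain the dyadic block estimate as a by-product of the first one. Throughout I would work with the convolution kernels of the Littlewood--Paley operators, writing $\Delta_j=h_j(D)$ with $h_j(y)=2^{jd}h(2^jy)$, $h=\mc{F}^{-1}\vphi$, and $S_j=\chi_j(D)$; all these kernels are Schwartz, and the decisive structural facts are $\int h(y)\,dy=\vphi(0)=0$ and $\int(\nabla\chi)^\vee(y)\,dy=0$. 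Two elementary consequences of admissibility would be recorded first: since $\Gamma_\mu(s)=s\,\mu(1/s)$ is non-decreasing on $[1,+\infty[$, the map $t\mapsto\mu(t)/t$ is non-increasing on $]0,1]$ (so $\mu(\lambda t)\le\lambda\,\mu(t)$ for $\lambda\ge1$ and $\mu(1)\,t\le\mu(t)$), and, after the substitution $t=1/\sigma$, the integral condition of Definition \ref{d:mod-cont} is equivalent to the Dini-type bound $\int_0^\rho\mu(t)\,t^{-1}\,dt\le C\,\mu(\rho)$ for all $\rho\in\,]0,1]$, which in particular yields $\sum_{j\ge m}\mu(2^{-j})\le C'\,\mu(2^{-m})$ and $\sum_j\mu(2^{-j})<\infty$ (use the convention $\mu(t)=\mu(1)$ for $t\ge1$).

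For the inclusion $\mc{C}_\mu\hookrightarrow B_{\Gamma_\mu}$ and the ``moreover'' estimate I would start from $a\in\mc{C}_\mu$ and exploit the zero-mean property to write, for $j\ge0$,
\[
\Delta_j a(x)=\int h_j(y)\bigl(a(x-y)-a(x)\bigr)\,dy,\qquad
\nabla S_j a(x)=\int (\nabla\chi)_j(y)\bigl(a(x-y)-a(x)\bigr)\,dy.
\]
Splitting each integral at $|y|=1$, I bound $|a(x-y)-a(x)|\le|a|_{\mc C_\mu}\,\mu(|y|)$ on $|y|\le1$, rescale $z=2^jy$, and use $\mu(2^{-j}|z|)\le\max(1,|z|)\,\mu(2^{-j})$ together with the integrability of $|z|\,|h(z)|$ and $|z|\,|(\nabla\chi)^\vee(z)|$; on $|y|>1$ I use $\|a\|_{L^\infty}$ and the Schwartz decay of the kernels, absorbing the resulting $O(2^{-jN})$ into $\mu(2^{-j})$ via $\mu(2^{-j})\ge\mu(1)\,2^{-j}$. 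This gives $\|\Delta_j a\|_{L^\infty}\le C\,\mu(2^{-j})\,\|a\|_{\mc C_\mu}$ for $j\ge0$ (and trivially for $j=-1$, since $\mu(2)>0$), as well as $\|\nabla S_j a\|_{L^\infty}\le C\,\Gamma_\mu(2^j)\,\|a\|_{\mc C_\mu}$, i.e. $\|a\|_{B_{\Gamma_\mu}}\le C\,\|a\|_{\mc C_\mu}$.

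For the reverse inclusion I would take $a\in B_{\Gamma_\mu}$ and first recover the block estimate directly from the $B_{\Gamma_\mu}$-norm. Since $\vphi$ is supported away from the origin, I factor $\vphi(\eta)=\sum_{l}(i\eta_l)\,g_l(\eta)$ with $g_l(\eta):=-\,i\,\eta_l\,|\eta|^{-2}\,\vphi(\eta)\in C_c^\infty(\R^d)$, and I fix $N_0$ so that the symbol of $S_{j+N_0}$ equals $1$ on the support of $\vphi(2^{-j}\cdot)$; then $\Delta_j a=2^{-j}\sum_l g_l(2^{-j}D)\,\d_l\bigl(S_{j+N_0}a\bigr)$, whence
\[
\|\Delta_j a\|_{L^\infty}\le C\,2^{-j}\,\|\nabla S_{j+N_0}a\|_{L^\infty}\le C\,2^{-j}\,\Gamma_\mu(2^{j+N_0})\,|a|_{B_{\Gamma_\mu}}\le C'\,\mu(2^{-j})\,|a|_{B_{\Gamma_\mu}}
\]
by monotonicity of $\mu$. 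As $\sum_j\mu(2^{-j})<\infty$, the partial sums $S_M a$ converge to $a$ uniformly. For $|y|\le1$ I choose $m\ge0$ with $2^{-m-1}<|y|\le2^{-m}$ and split $a(x+y)-a(x)$ into $\bigl(S_{m+1}a(x+y)-S_{m+1}a(x)\bigr)+\sum_{j\ge m+1}\bigl(\Delta_j a(x+y)-\Delta_j a(x)\bigr)$; the low-frequency term is $\le|y|\,\|\nabla S_{m+1}a\|_{L^\infty}\le C\,|y|\,2^{m+1}\mu(2^{-m-1})\,|a|_{B_{\Gamma_\mu}}\le C\,\mu(|y|)\,|a|_{B_{\Gamma_\mu}}$, and the tail is $\le2\sum_{j\ge m+1}\|\Delta_j a\|_{L^\infty}\le C\,|a|_{B_{\Gamma_\mu}}\sum_{j\ge m+1}\mu(2^{-j})\le C'\,\mu(2^{-m})\,|a|_{B_{\Gamma_\mu}}\le C''\,\mu(|y|)\,|a|_{B_{\Gamma_\mu}}$ by the Dini bound and the sub-linearity of $\mu$. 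Hence $|a|_{\mc C_\mu}\le C\,\|a\|_{B_{\Gamma_\mu}}$, which together with the previous step proves the equality of the two spaces with equivalent norms.

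The main obstacle is the bookkeeping at the interface of the two frequency regimes: recasting the abstract admissibility integral as the usable Dini inequality, summing the high-frequency block tail as $\sum_{j>m}\mu(2^{-j})\lesssim\mu(2^{-m})$, and matching it with the factor $|y|\,2^{m}\mu(2^{-m})\sim\mu(|y|)$ produced by the low-frequency mean-value estimate — all the while keeping the $L^\infty$ ``remainder'' terms coming from the non-compactly-supported kernels under control, which forces repeated use of the lower bound $\mu(t)\ge\mu(1)\,t$ and of the monotonicity/sub-linearity of $\mu$.
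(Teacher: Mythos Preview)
Your argument is correct. Note, however, that the paper does not prove this proposition at all: it is quoted from Proposition 2.111 of \cite{B-C-D}, so there is no ``paper's own proof'' to compare against. What you have written is essentially the standard proof one finds in that reference: the forward inclusion via the zero-mean property of the Littlewood--Paley kernels and the sublinearity $\mu(\lambda t)\le\lambda\,\mu(t)$, and the reverse inclusion by splitting $a=S_{m+1}a+\sum_{j>m}\Delta_j a$ at the scale $2^{-m}\sim|y|$, controlling the low-frequency part by the mean-value theorem and the tail by the Dini bound $\sum_{j\ge m}\mu(2^{-j})\lesssim\mu(2^{-m})$ that you correctly derived from admissibility. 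Your factorisation $\vphi(\eta)=\sum_l(i\eta_l)g_l(\eta)$ to pass from $\|\nabla S_j a\|_{L^\infty}$ to $\|\Delta_j a\|_{L^\infty}$ is a clean way to close the loop; the handling of the region $|y|>1$ through Schwartz decay and the lower bound $\mu(t)\ge\mu(1)\,t$ is also fine.
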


Now we want to present a commutator lemma, which is fundamental in the proof of our main result.
Let us recall first the classical result (see Lemma 2.97 of \cite{B-C-D}).
\begin{lemma} \label{l:commutator}
Let $\theta\in\mc{C}^1(\R^d)$ such that $\bigl(1+|\,\cdot\,|\bigr)\what{\theta}\,\in\,L^1$. There exists a constant $C$ such that,
for any Lipschitz function $\ell\in W^{1,\infty}(\R^d)$ and any $f\in L^p(\R^d)$ and for all $\lambda>0$, one has
$$
\left\|\bigl[\theta(\lambda^{-1}D),\ell\bigr]f\right\|_{L^p}\,\leq\,C\,\lambda^{-1}\,\left\|\nabla\ell\right\|_{L^\infty}\,\|f\|_{L^p}\,.
$$
\end{lemma}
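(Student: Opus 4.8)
The plan is to reduce everything to the convolution kernel of $\theta(\lambda^{-1}D)$ and to extract the gain $\lambda^{-1}$ from a scaling argument. Set $h\,:=\,\mc{F}^{-1}\theta$, so that (with the usual normalization) the operator $\theta(\lambda^{-1}D)$ acts as convolution against $h_\lambda(z)\,:=\,\lambda^d\,h(\lambda z)$. The assumption $\bigl(1+|\cdot|\bigr)\what{\theta}\in L^1(\R^d)$ is exactly the statement that $\bigl(1+|\cdot|\bigr)\,h\in L^1(\R^d)$ (the two functions differ only by $z\mapsto -z$ and a constant), and in particular both $\|h\|_{L^1}$ and $\bigl\||\cdot|\,h\bigr\|_{L^1}$ are finite. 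In what follows one may either work directly with the convolution representation — which makes sense for every $f\in L^p(\R^d)$, $1\le p\le\infty$, since $h_\lambda\in L^1$ — or first assume $f\in\mc{S}(\R^d)$ and extend afterwards by density, using that both $\theta(\lambda^{-1}D)$ and multiplication by $\ell\in L^\infty$ are bounded on $L^p$.

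The key identity is that the commutator only ``sees'' the increment of $\ell$: for a.e. $x\in\R^d$,
$$
\bigl[\theta(\lambda^{-1}D),\ell\bigr]f(x)\;=\;\int_{\R^d}h_\lambda(x-y)\,\bigl(\ell(y)-\ell(x)\bigr)\,f(y)\,dy\,,
$$
because the term $\ell(x)\,\theta(\lambda^{-1}D)f(x)$ is precisely $\int h_\lambda(x-y)\,\ell(x)\,f(y)\,dy$. Using the Lipschitz bound $|\ell(y)-\ell(x)|\le\|\nabla\ell\|_{L^\infty}\,|x-y|$ one gets
$$
\bigl|\bigl[\theta(\lambda^{-1}D),\ell\bigr]f(x)\bigr|\;\leq\;\|\nabla\ell\|_{L^\infty}\int_{\R^d}|x-y|\,\bigl|h_\lambda(x-y)\bigr|\,|f(y)|\,dy\;=\;\|\nabla\ell\|_{L^\infty}\,\bigl(g_\lambda*|f|\bigr)(x)\,,
$$
where $g(z):=|z|\,|h(z)|$ and $g_\lambda(z):=\lambda^d\,g(\lambda z)$. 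The crucial scaling observation is that $|z|\,|h_\lambda(z)|=\lambda^{-1}\,g_\lambda(z)$, so that a factor $\lambda^{-1}$ is produced, while $\|g_\lambda\|_{L^1}=\|g\|_{L^1}=\bigl\||\cdot|\,h\bigr\|_{L^1}$ is a finite constant independent of $\lambda$.

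Applying Young's convolution inequality ($L^1*L^p\hookrightarrow L^p$ for all $p\in[1,\infty]$) then yields
$$
\bigl\|\bigl[\theta(\lambda^{-1}D),\ell\bigr]f\bigr\|_{L^p}\;\leq\;\|\nabla\ell\|_{L^\infty}\,\|g_\lambda\|_{L^1}\,\lambda^{-1}\,\|f\|_{L^p}\;=\;C\,\lambda^{-1}\,\|\nabla\ell\|_{L^\infty}\,\|f\|_{L^p}
$$
with $C=\bigl\||\cdot|\,h\bigr\|_{L^1}$, which is the assertion. There is no serious obstacle here: the whole content is the kernel identity together with the scaling $|z|\,|h_\lambda(z)|=\lambda^{-1}g_\lambda(z)$. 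The only points deserving a line of care are the justification of the integral representation of the commutator (immediate once $h\in L^1$) and the translation of the Fourier-side hypothesis on $\theta$ into the first-moment bound $\bigl\||\cdot|\,h\bigr\|_{L^1}<\infty$, which is just a change of variables.
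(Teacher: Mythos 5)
Your proof is correct and follows exactly the classical argument that the paper invokes by citing Lemma 2.97 of \cite{B-C-D}: represent the commutator through the convolution kernel $\lambda^d\,k(\lambda\,\cdot)$ of $\theta(\lambda^{-1}D)$, use the Lipschitz bound on $\ell$ to produce the factor $\lambda^{-1}\,|\lambda(x-y)|\,|k(\lambda(x-y))|$, and conclude with Young's inequality. In particular, your constant $C=\bigl\|\,|\cdot|\,k\,\bigr\|_{L^1}$ matches the paper's remark that the constant depends only on the $L^1$ norm of $|x|\,k(x)$ with $k=\mc{F}_\xi^{-1}\theta$.
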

Going along the lines of the proof, it is easy to see that the constant $C$ depends just on the $L^1$ norm
of the function $|x|\,k(x)$, where $k\,=\,\mc{F}_\xi^{-1}\theta$ denotes the inverse Fourier transform of $\theta$.

Easy modifications of the proof of Lemma \ref{l:commutator} give a variation of the previous lemma. For simplicity, we restrict our attention to the case of $\theta$ in the Schwartz
class $\mc{S}(\R^d)$.
\begin{lemma} \label{l:comm_L^p}
Let $\theta\in\mc{S}(\R^d)$ and $(p_1,p_2,q)\in[1,+\infty]^3$ such that $1/q\,=\,1+1/p_2-1/p_1$.
Then there exists a constant $C$ such that,
for any $f\in L^{p_1}(\R^d)$, any $\ell\in W^{1,\infty}(\R^d)$ and all $\lambda>0$,
$$
\left\|\bigl[\theta(\lambda^{-1}D),\ell\bigr]f\right\|_{L^{p_2}}\,\leq\,C\,
\lambda^{-1}\,\left\|\nabla\ell\right\|_{L^\infty}\,\|f\|_{L^{p_1}}\,.
$$
The constant $C$ just depends on the $L^q$ norm of the function $|x|\,k(x)$, where $k\,=\,\mc{F}_\xi^{-1}\theta$ as above.
\end{lemma}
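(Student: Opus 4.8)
The plan is to obtain Lemma \ref{l:comm_L^p} by revisiting the proof of the classical commutator Lemma \ref{l:commutator} and tracking more carefully which $L^q$ norm of $|x|\,k(x)$ one actually uses, so that Young's inequality for convolutions can be invoked with the general exponent triple $(p_1,p_2,q)$ rather than just $q=1$. First I would write the operator $\theta(\lambda^{-1}D)$ as convolution against the kernel $k_\lambda(x):=\lambda^d\,k(\lambda x)$, where $k=\mc{F}_\xi^{-1}\theta$ belongs to $\mc{S}(\R^d)$ by assumption, so that
$$
\bigl[\theta(\lambda^{-1}D),\ell\bigr]f(x)\,=\,\int_{\R^d}k_\lambda(x-y)\,\bigl(\ell(y)-\ell(x)\bigr)\,f(y)\,dy\,.
$$

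Next I would exploit the elementary bound $|\ell(y)-\ell(x)|\leq\|\nabla\ell\|_{L^\infty}\,|x-y|$, valid for $\ell\in W^{1,\infty}(\R^d)$, to dominate the commutator pointwise by
$$
\left|\bigl[\theta(\lambda^{-1}D),\ell\bigr]f(x)\right|\,\leq\,\|\nabla\ell\|_{L^\infty}\int_{\R^d}|x-y|\,\bigl|k_\lambda(x-y)\bigr|\,|f(y)|\,dy\,=\,\|\nabla\ell\|_{L^\infty}\,\bigl(g_\lambda*|f|\bigr)(x)\,,
$$
where $g_\lambda(z):=|z|\,|k_\lambda(z)|$. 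A change of variables gives $g_\lambda(z)=\lambda^{d-1}\,g(\lambda z)$ with $g(z):=|z|\,|k(z)|$, hence by scaling $\|g_\lambda\|_{L^q}=\lambda^{d-1-d/q}\,\|g\|_{L^q}=\lambda^{-1}\,\lambda^{d(1-1/q)}\,\|g\|_{L^q}$. Since $k\in\mc S$, the function $g\in L^q$ for every $q\in[1,+\infty]$, so $\|g\|_{L^q}<+\infty$.

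Then I would apply Young's convolution inequality: with the hypothesis $1/q=1+1/p_2-1/p_1$, that is $1+1/p_2=1/q+1/p_1$, one has $\|g_\lambda*|f|\|_{L^{p_2}}\leq\|g_\lambda\|_{L^q}\,\|f\|_{L^{p_1}}$. Combining with the scaling identity for $\|g_\lambda\|_{L^q}$ yields
$$
\left\|\bigl[\theta(\lambda^{-1}D),\ell\bigr]f\right\|_{L^{p_2}}\,\leq\,C\,\lambda^{-1}\,\lambda^{d(1-1/q)}\,\|\nabla\ell\|_{L^\infty}\,\|f\|_{L^{p_1}}\,,
$$
with $C=\|g\|_{L^q}$. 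The only remaining point is the power of $\lambda$: the relation $1/q=1+1/p_2-1/p_1$ forces $1-1/q=1/p_1-1/p_2$, so that $d(1-1/q)=d(1/p_1-1/p_2)$, which need not vanish in general. I would therefore either (as is presumably intended in the paper, since in all applications the commutator is hit with a fixed dyadic block or a frequency truncation so that $f$ is spectrally localized at frequency $\sim\lambda$ and all $L^{p_i}$ norms of $f$ are comparable by Bernstein) absorb this factor using Bernstein's inequality, or simply restrict to the diagonal case $p_1=p_2$ in which the extra power disappears and one recovers exactly the stated $\lambda^{-1}$ scaling with $q=1$; the genuinely new content over Lemma \ref{l:commutator} is then that the constant is controlled by $\|\,|x|\,k(x)\|_{L^q}$ for the $q$ dictated by the exponents, rather than only $q=1$. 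The main (very mild) obstacle is thus purely bookkeeping: making precise, in the statement as used later, whether $f$ is spectrally localized, so that the $\lambda$-power from Young's inequality is either exactly $-1$ or is reconciled via Bernstein; the analytic core is just the pointwise bound $|\ell(y)-\ell(x)|\leq\|\nabla\ell\|_{L^\infty}|x-y|$ followed by Young's inequality, exactly as in the proof of Lemma \ref{l:commutator}.
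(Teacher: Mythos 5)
Your computation is exactly the paper's (unwritten) proof: for Lemma \ref{l:comm_L^p} the paper only asserts that ``easy modifications'' of the proof of Lemma \ref{l:commutator} suffice, and those modifications are precisely what you carried out --- represent $\theta(\lambda^{-1}D)$ by the kernel $k_\lambda(x)=\lambda^d k(\lambda x)$, use $|\ell(y)-\ell(x)|\leq\|\nabla\ell\|_{L^\infty}|x-y|$, and apply Young's inequality with the triple $(p_1,p_2,q)$. Moreover, the point you flag at the end is not a bookkeeping detail to be argued away: the argument yields
\begin{equation*}
\bigl\|\bigl[\theta(\lambda^{-1}D),\ell\bigr]f\bigr\|_{L^{p_2}}\,\leq\,\bigl\|\,|\cdot|\,k\,\bigr\|_{L^q}\;\lambda^{d(1/p_1-1/p_2)-1}\,\bigl\|\nabla\ell\bigr\|_{L^\infty}\,\|f\|_{L^{p_1}}\,,
\end{equation*}
and the factor $\lambda^{d(1/p_1-1/p_2)}$ cannot be absorbed into a constant depending only on $\|\,|x|\,k(x)\|_{L^q}$. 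Indeed the statement as printed fails whenever $p_1<p_2$: take $d=1$, $k$ a Gaussian, $\ell$ equal to $x$ on $[-10,10]$ and truncated outside, and $f$ a unit-mass bump of width $\delta\to0$; then $[\theta(\lambda^{-1}D),\ell]f(x)\to\lambda\,k(\lambda x)\bigl(\ell(0)-\ell(x)\bigr)$, whose value at $x=1/\lambda$ is $-k(1)$, so its $L^\infty$ norm stays bounded away from $0$ uniformly in $\lambda\geq1$, whereas the claimed bound with $(p_1,p_2,q)=(1,\infty,\infty)$ would force decay $O(\lambda^{-1})$. So your derivation is correct and coincides with the paper's route; it is the stated $\lambda^{-1}$ rate for $p_1\neq p_2$ that is too strong, and the honest conclusion is the displayed inequality, which reduces to the stated one exactly when $p_1=p_2$, i.e.\ $q=1$.

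Two caveats on your proposed ways out. Restricting to $p_1=p_2$ merely reproduces Lemma \ref{l:commutator}, and the Bernstein reconciliation requires $f$ to be spectrally localized at frequencies $\sim\lambda$, which is not the situation where the lemma is actually used: in the proof of Proposition \ref{p:regular} it is invoked with $\theta=\chi$, $\lambda=2^M$, $p_1=1$, $p_2=q=2$ and $f=\d_j\bigl(e^3\times V_\veps\bigr)$, which is only known to be bounded in $L^2_T(L^1_{loc})$ and carries all frequencies. With the corrected exponent that commutator is only $O\bigl(2^{M(d/2-1)}\bigr)$, which does not tend to $0$ in the relevant dimension, so the smallness of $g_{\veps,M}$ in $H^1_{loc}$ is not recovered this way; one would rather have to re-run that estimate using the better integrability of $D(\rho_\veps u_\veps)$ contained in \eqref{sing-b:D_rho-u} (its $L^2_T(L^2+L^{3/2})$ part, for which the corrected lemma still produces a negative power of $2^{M}$) instead of the $L^1_{loc}$ bound. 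In short: your proof is the paper's proof, your scaling objection is correct, and the repair must go through a corrected statement (and a corrected use of it), not through a modification of your argument.
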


Let us consider now less regular functions $\ell$. The next result is proved in \cite{F_2016}.
\begin{lemma} \label{l:comm-less}
Let $\theta\in\mc{C}^1(\R^d)$ be as in Lemma \ref{l:commutator}, and let $\mu$ be an admissible modulus of continuity. Then, there exists
a constant $C$ such that, for any function $\ell\in\mc{C}_{\mu}(\R^d)$ and any $f\in L^p(\R^d)$ and for all $\lambda>1$, one has
$$
\left\|\bigl[\theta(\lambda^{-1}D),\ell\bigr]f\right\|_{L^p}\,\leq\,C\,\mu(\lambda^{-1})\,\left|\ell\right|_{\mc{C}_\mu}\,\|f\|_{L^p}\,.
$$
The constant $C$ only depends on the $L^1$ norms of the functions $k(x)$ and $|x|\,k(x)$.
\end{lemma}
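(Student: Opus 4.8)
\textbf{Proof strategy for Lemma \ref{l:comm-less}.}
The plan is to follow the same scheme as in the proof of Lemma \ref{l:commutator} (Lemma 2.97 of \cite{B-C-D}), tracking where the Lipschitz bound on $\ell$ is used and replacing it with the weaker modulus-of-continuity information. First I would write out the commutator in integral form. Since $\theta\in\mc{C}^1(\R^d)$ with $(1+|\cdot|)\what\theta\in L^1$, setting $k:=\mc{F}^{-1}_\xi\theta\in L^1$ with $|x|\,k(x)\in L^1$, the operator $\theta(\lambda^{-1}D)$ acts by convolution against $k_\lambda(x):=\lambda^d\,k(\lambda x)$. Hence
$$
\bigl[\theta(\lambda^{-1}D),\ell\bigr]f(x)\,=\,\int_{\R^d}k_\lambda(x-y)\,\bigl(\ell(y)-\ell(x)\bigr)\,f(y)\,dy\,.
$$
In the Lipschitz case one bounds $|\ell(y)-\ell(x)|\leq\|\nabla\ell\|_{L^\infty}|x-y|$, pulls out the factor $|x-y|$ against $k_\lambda$, and the change of variables produces the gain $\lambda^{-1}$ via $\||\cdot|\,k_\lambda\|_{L^1}=\lambda^{-1}\||\cdot|\,k\|_{L^1}$. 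Here instead the natural bound is $|\ell(y)-\ell(x)|\leq |\ell|_{\mc{C}_\mu}\,\mu(|x-y|)$ for $|x-y|\leq1$.

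The second step is to split the integral according to whether $|x-y|\leq1$ or $|x-y|>1$. On the near region, I would use the modulus bound to get a kernel estimate $|k_\lambda(x-y)|\,\mu(|x-y|)$, and then invoke the admissibility of $\mu$: writing $\Gamma_\mu(s)=s\,\mu(1/s)$, one has $\mu(|x-y|)=|x-y|\,\Gamma_\mu(1/|x-y|)$, and the monotonicity of $\Gamma_\mu$ together with the integral condition $\int_s^{+\infty}\sigma^{-2}\Gamma_\mu(\sigma)\,d\sigma\leq C\,s^{-1}\Gamma_\mu(s)$ is exactly what is needed to control $\int_{|z|\leq1}|k_\lambda(z)|\,\mu(|z|)\,dz$. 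Substituting $z=w/\lambda$ and using the rapid decay of $k$ (or just $|w|\,k(w)\in L^1$), this integral is bounded by $C\,\mu(\lambda^{-1})$ for $\lambda>1$; the logarithmic-type corrections that would otherwise appear are precisely absorbed by the admissibility integral condition. On the far region $|x-y|>1$ one simply uses $|\ell(y)-\ell(x)|\leq2\|\ell\|_{L^\infty}$, but since $\mu$ is non-decreasing and $|\ell|_{\mc C_\mu}$ controls oscillations only up to scale $1$, a cleaner route is to observe that for $\lambda>1$ the tail $\int_{|z|>1}|k_\lambda(z)|\,dz$ is bounded by $\lambda^{-1}\int_{|w|>\lambda}|w|\,|k(w)|\,dw\leq C\lambda^{-1}\leq C\,\mu(\lambda^{-1})/\mu(1)$ up to harmless constants, using that $|w|\,k(w)\in L^1$; alternatively, one bounds this piece by $C\,\mu(\lambda^{-1})$ directly since $\mu(\lambda^{-1})\gtrsim\lambda^{-1}\mu(1)$ is false in general, so the $|x|\,k(x)\in L^1$ tail estimate is the safe choice, and then absorb $\lambda^{-1}\leq\mu(\lambda^{-1})\cdot\lambda^{-1}/\mu(\lambda^{-1})$ — here I would just keep $\mu(\lambda^{-1})$ as the stated bound by noting $\mu$ admissible implies $\lambda^{-1}\le C\,\mu(\lambda^{-1})$ fails, so more carefully: admissibility gives $\Gamma_\mu$ non-decreasing, hence $s\,\mu(1/s)\ge 1\cdot\mu(1)$ for $s\ge1$, i.e. $\mu(1/s)\ge \mu(1)/s$, which is the wrong direction. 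The correct observation is that $\mu$ non-decreasing gives nothing on the tail, so the far piece must genuinely be estimated by $\lambda^{-1}$, which for $\lambda>1$ is $\le 1$; to conclude the clean bound $C\,\mu(\lambda^{-1})|\ell|_{\mc C_\mu}$ one notes $2\|\ell\|_{L^\infty}$ should be replaced by using that $k$ decays, giving tail $\lesssim \lambda^{-N}$ for any $N$, hence negligible compared to $\mu(\lambda^{-1})$.

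Once both regions are controlled pointwise in $x$ by a convolution-type kernel of total mass $\leq C\,\mu(\lambda^{-1})\,|\ell|_{\mc{C}_\mu}$, the third and final step is an application of Young's convolution inequality (or Minkowski's integral inequality): the commutator is, in modulus, bounded by $\bigl(C\,\mu(\lambda^{-1})\,|\ell|_{\mc{C}_\mu}\,g_\lambda\bigr)*|f|$ where $g_\lambda$ is an $L^1$-normalized profile independent of $\ell$ and $f$, giving the $L^p\to L^p$ bound with the asserted constant, which depends only on $\|k\|_{L^1}$ and $\||\cdot|\,k\|_{L^1}$. I expect the main obstacle to be the careful handling of the near-diagonal integral $\int_{|z|\le 1}|k_\lambda(z)|\,\mu(|z|)\,dz$: naively this is of size $\mu(\lambda^{-1})$ only if $\mu$ is, say, H\"older, and for general $\mu$ one picks up a factor like $\sum_{j\ge 0}\mu(2^{-j})$ or $\int\mu/s\,ds$ which diverges; it is exactly the admissibility hypothesis (the integral condition on $\Gamma_\mu$, equivalently the statement of Proposition \ref{p:cont-equiv} relating $\mc C_\mu$ and $B_{\Gamma_\mu}$) that tames this sum and yields the clean $\mu(\lambda^{-1})$ bound. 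A convenient bookkeeping alternative is to perform the whole argument on the Littlewood-Paley side, using $\|\Delta_j\ell\|_{L^\infty}\le C\mu(2^{-j})$ from Proposition \ref{p:cont-equiv} and the Bony paraproduct decomposition of $\ell f$, which makes the summation over frequencies transparent and again converges precisely by admissibility.
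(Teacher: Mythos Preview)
Your overall strategy is correct and is the same one the paper uses (compare the proof of Lemma~\ref{l:comm-zyg}, which is the exact template with $\wtilde\mu$ in place of $\mu$): write the commutator in convolution form, bound the increment of $\ell$ by $|\ell|_{\mc C_\mu}\,\mu(|z|)$, reduce to estimating $\lambda^d\int|k(\lambda z)|\,\mu(|z|)\,dz$, and conclude by Young's inequality. However, your execution is muddled at two points and overcomplicates what is in fact a two-line estimate.

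First, the right split is at $|z|=\lambda^{-1}$, not at $|z|=1$. On $\{|z|\leq\lambda^{-1}\}$, monotonicity of $\mu$ alone gives $\mu(|z|)\leq\mu(\lambda^{-1})$, hence a contribution $\leq\mu(\lambda^{-1})\|k\|_{L^1}$. On $\{|z|\geq\lambda^{-1}\}$, write $\mu(|z|)=|z|\,\Gamma_\mu(|z|^{-1})$ and use only that $\Gamma_\mu$ is non-decreasing together with $|z|^{-1}\leq\lambda$ to get $\Gamma_\mu(|z|^{-1})\leq\Gamma_\mu(\lambda)$; this yields a contribution $\leq\Gamma_\mu(\lambda)\,\lambda^{-1}\,\||\cdot|\,k\|_{L^1}=\mu(\lambda^{-1})\,\||\cdot|\,k\|_{L^1}$. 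That is the whole proof. The integral condition on $\Gamma_\mu$ in Definition~\ref{d:mod-cont} is \emph{not} used here, and no ``logarithmic-type corrections'' arise: your expectation that a divergent sum $\sum_j\mu(2^{-j})$ or $\int\mu(s)/s\,ds$ appears is based on the wrong split point.

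Second, in your tail discussion you correctly derive $\mu(1/s)\geq\mu(1)/s$ from $\Gamma_\mu$ non-decreasing, and then declare this to be ``the wrong direction''. It is the right direction: it says precisely $\lambda^{-1}\leq\mu(\lambda^{-1})/\mu(1)$, which would absorb any $O(\lambda^{-1})$ tail into $C\,\mu(\lambda^{-1})$. With the split at $\lambda^{-1}$ this issue does not even come up, since the far piece already carries the factor $\mu(\lambda^{-1})$ and both regions are handled uniformly without invoking rapid decay of $k$ or $\|\ell\|_{L^\infty}$.
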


Obviously, an extension of the previous result, in the same spirit of Lemma \ref{l:comm_L^p}, holds true.

\subsubsection{Zygmund-type regularity conditions} \label{sss:Z_mu}

Let us now focus on second order conditions, namely Zygmund conditions. Given $\mu$ an admissible modulus of continuity,
we have defined the space $\mc{Z}_\mu$ in \eqref{def:Z}.

It is clear that $\mc{C}_\mu\,\subset\,\mc{Z}_\mu$.
Notice that, if $\mu(s)=s$ then one recovers the classical Zygmund space, while if $\mu(s)=s\,|\log s|$ then $\mc{Z}_\mu$
coincides with the space of log-Zygmund functions.

Zygmund and log-Zygmund conditions were introduced by Tarama \cite{Tar} in studying well-posedness of hyperbolic Cauchy problems with low regularity coefficients.
We refer to \cite{C-DS-F-M_Z-syst} and the references therein for further details and results in the same direction, and to \cite{F-Z}
for applications to control problems.

By use of Littlewood-Paley decomposition, we study here some properties of the space $\mc{Z}_\mu$.
The following analysis extends well-known facts about Zygmund and log-Zygmund classes (see e.g. Chapter 2 of \cite{B-C-D}, \cite{F-Z}).

First of all, we want to characterize $\mc{Z}_\mu$ as a special Besov-type class. Recalling Proposition \ref{p:cont-equiv},
this will imply (again) $\mc{C}_\mu\subset\mc{Z}_\mu$ in terms of dyadic blocks.
\begin{prop} \label{p:zyg}
The space $\mc{Z}_\mu$ coincides with the Besov-type class
$$
\mc{B}_\mu(\R^d)\,:=\,\left\{a\,\in\,L^\infty(\R^d;\R)\;\Bigl|\;\left\|\Delta_ja\right\|_{L^\infty}\,\leq\,C\,\mu(2^{-j})
\quad\forall\;j\geq-1\right\}\,.
$$
Moreover, the $\mc{Z}_\mu$ and $\mc{B}_\mu$ norms are equivalent, where we have defined
$$
\left\|a\right\|_{\mc{B}_\mu}\,:=\,\sup_{j\geq-1}\!\left(\frac{1}{\mu(2^{-j})}\,\|\Delta_ja\|_{L^\infty}\right)\,.
$$
\end{prop}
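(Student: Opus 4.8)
The plan is to establish the two inclusions $\mc{B}_\mu(\R^d)\subset\mc{Z}_\mu(\R^d)$ and $\mc{Z}_\mu(\R^d)\subset\mc{B}_\mu(\R^d)$ separately, keeping track of the constants so that the norm equivalence comes out at the same time; this extends the classical dyadic characterization of the Zygmund and log-Zygmund classes (as in Chapter 2 of \cite{B-C-D}). Two elementary consequences of the admissibility of $\mu$ will be used repeatedly. First, that $\Gamma_\mu(s)=s\,\mu(1/s)$ is non-decreasing is equivalent to $t\mapsto\mu(t)/t$ being non-increasing on $]0,1]$; in particular $\mu(t)\geq\mu(1)\,t$, and $\mu$ is ``doubling'', $\mu(t)\leq 2\,\mu(t/2)$. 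Second, the integral tail bound in Definition \ref{d:mod-cont} translates, after comparing the sum with $\int_N^{+\infty}\mu(2^{-t})\,dt$ and changing variables, into $\sum_{j>N}\mu(2^{-j})\leq C\,\mu(2^{-N})$ for all $N\geq0$. Throughout, for $j=-1$ one reads $\mu(2^{-j})$ as $\mu(1)$, and the block $\Delta_{-1}a$ is always harmless since $\|\Delta_{-1}a\|_{L^\infty}\leq C\,\|a\|_{L^\infty}$.

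For $\mc{B}_\mu\subset\mc{Z}_\mu$, fix $a$ with $\|\Delta_ja\|_{L^\infty}\leq\|a\|_{\mc{B}_\mu}\,\mu(2^{-j})$, fix $|y|\leq1$ and let $N\geq0$ be such that $2^{-N}\sim|y|$. Writing $a=\sum_{j\geq-1}\Delta_ja$, I split the second difference $a(x+y)+a(x-y)-2a(x)$ into $j\leq N$ and $j>N$. On the low-frequency part I use a second-order Taylor expansion together with the Bernstein inequality $\|\nabla^2\Delta_ja\|_{L^\infty}\leq C\,2^{2j}\|\Delta_ja\|_{L^\infty}$, obtaining the bound $C\,|y|^2\sum_{j\leq N}2^{2j}\mu(2^{-j})=C\,|y|^2\sum_{j\leq N}2^{j}\,\Gamma_\mu(2^{j})$; since $\Gamma_\mu$ is non-decreasing this geometric-type sum is $\lesssim|y|^2\,2^{N}\,\Gamma_\mu(2^{N})=|y|^2\,2^{2N}\mu(2^{-N})\lesssim\mu(|y|)$. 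On the high-frequency part I use the crude bound $|\Delta_ja(x+y)+\Delta_ja(x-y)-2\Delta_ja(x)|\leq4\|\Delta_ja\|_{L^\infty}\leq4\|a\|_{\mc{B}_\mu}\mu(2^{-j})$ and sum, invoking $\sum_{j>N}\mu(2^{-j})\lesssim\mu(2^{-N})\sim\mu(|y|)$. Together with $\|a\|_{L^\infty}\leq\sum_{j\geq-1}\|\Delta_ja\|_{L^\infty}\lesssim\|a\|_{\mc{B}_\mu}$ this gives $\|a\|_{\mc{Z}_\mu}\leq C\,\|a\|_{\mc{B}_\mu}$.

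For $\mc{Z}_\mu\subset\mc{B}_\mu$, set $h_j:=\mc{F}^{-1}\vphi_j$, so that $\Delta_ja=h_j*a$ for $j\geq0$. Since $\vphi$ vanishes in a neighbourhood of the origin, $\int h_j=0$; since $\chi$ (hence $\vphi$) is radial, $h_j$ is even. Therefore, for $j\geq0$,
\[
\Delta_ja(x)\,=\,\frac12\int h_j(y)\,\bigl(a(x+y)+a(x-y)-2a(x)\bigr)\,dy\,,
\]
and after the change of variables $y=2^{-j}z$, using $|a(x+y)+a(x-y)-2a(x)|\leq|a|_{\mc{Z}_\mu}\,\mu(|y|)$ for $|y|\leq1$ and $\leq4\|a\|_{L^\infty}$ otherwise, I split the $z$-integral at $|z|=2^j$. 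On $|z|\leq2^j$ the monotonicity of $\mu(t)/t$ gives $\mu(2^{-j}|z|)\leq(1+|z|)\,\mu(2^{-j})$, and $(1+|\cdot|)\,\mc{F}^{-1}\vphi\in L^1$ yields a contribution $\lesssim|a|_{\mc{Z}_\mu}\,\mu(2^{-j})$; on $|z|>2^j$ the rapid decay of $\mc{F}^{-1}\vphi$ produces a factor $2^{-j}\lesssim\mu(2^{-j})$ (here $\mu(2^{-j})\geq\mu(1)\,2^{-j}$), so this part is $\lesssim\|a\|_{L^\infty}\,\mu(2^{-j})$. Hence $\|\Delta_ja\|_{L^\infty}\leq C\,\|a\|_{\mc{Z}_\mu}\,\mu(2^{-j})$ for every $j\geq0$, and together with the trivial bound for $j=-1$ this is exactly $\|a\|_{\mc{B}_\mu}\leq C\,\|a\|_{\mc{Z}_\mu}$.

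The Taylor/Bernstein and convolution estimates are routine; the real content — and where all the hypotheses on $\mu$ are consumed — is matching the dyadic decay $\mu(2^{-j})$ against the modulus $\mu(|y|)$ in both directions, i.e. the two summation estimates ($\sum_{j\le N}2^{2j}\mu(2^{-j})\lesssim2^{2N}\mu(2^{-N})$ from $\Gamma_\mu$ non-decreasing, and $\sum_{j>N}\mu(2^{-j})\lesssim\mu(2^{-N})$ from the integral condition) and the rescaling inequality $\mu(2^{-j}|z|)\lesssim(1+|z|)\,\mu(2^{-j})$ from $\mu(t)/t$ non-increasing. One must also be mildly careful about the convention for $\mu$ outside $[0,1]$ and about the $\Delta_{-1}$ block, but these cause no genuine difficulty.
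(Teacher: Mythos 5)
Your proof is correct and follows essentially the same route as the paper's: $\mc{Z}_\mu\subset\mc{B}_\mu$ via the convolution of the (even, zero-mean) kernel of $\Delta_j$ against the second difference, and $\mc{B}_\mu\subset\mc{Z}_\mu$ via the dyadic splitting of the second difference, with Taylor--Bernstein on low frequencies and the admissibility tail bound on high frequencies. The only (harmless) difference is bookkeeping: you split the convolution integral at $|y|=1$ and use the $L^\infty$ bound plus the kernel's decay there, while the paper splits at $|y|=2^{-j}$ and rewrites $\mu(|y|)=\Gamma_\mu(|y|^{-1})\,|y|$ on the outer region; your version is in fact slightly more careful about $|y|>1$, where the $\mc{Z}_\mu$ estimate is not directly available.
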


\begin{proof}
First of all, let us take an $a\in\mc{Z}_\mu(\R^d)$. By Bernstein inequality, we immediately have
$\|\Delta_{-1}a\|_{L^\infty}\leq\|a\|_{L^\infty}$. Now let us denote by $h$ the inverse Fourier transform of $\vphi$:
since $\vphi$ is even and $\int h\,=\,\vphi(0)\,=\,0$, for any $j\geq0$ we can write
\begin{eqnarray*}
\Delta_ja(x) & = & 2^{jd}\int_{\R^d}h(2^jy)\,a(x-y)\,dy\;=\;2^{jd-1}\int_{\R^d}h(2^jy)\,\bigl(a(x+y)+a(x-y)\bigr)\,dy \\
& = & 2^{jd-1}\int_{\R^d}h(2^jy)\,\bigl(a(x+y)+a(x-y)-2a(x)\bigr)\,dy\,.
\end{eqnarray*}
From this we deduce that
$$
\left\|\Delta_ja\right\|_{L^\infty}\,\leq\,C\,2^{jd}\,\int_{\R^d}|h|(2^j\,y)\,\mu(|y|)\,dy\,.
$$
Let us split the previous integral according to the space decomposition $\R^d\,=\,\left\{|y|\leq2^{-j}\right\}\,\cup\,
\left\{|y|\geq2^{-j}\right\}$. For the former term, since $\mu$ is increasing we have
$$
2^{jd}\,\int_{|y|\leq 2^{-j}}|h|(2^j\,y)\,\mu(|y|)\,dy\,\leq\,\mu(2^{-j})\,\|h\|_{L^1}\,.
$$
For the latter term, instead, we make the non-decreasing function $\Gamma_\mu$ appear, and we estimate
\begin{eqnarray*}
2^{jd}\,\int_{|y|\geq2^{-j}}|h|(2^{j}\,y)\,\mu(|y|)\,dz & = &
2^{jd}\,\int_{|y|\geq2^{-j}}|h|(2^j\,y)\,\Gamma_\mu(|y|^{-1})\,|y|\,dy \\
& \leq & C\,\Gamma_\mu(2^j)\,2^{-j}\,\left\|\;|\cdot|\;h(\,\cdot\,)\right\|_{L^1}\;\leq\;C\,\mu(2^{-j})\,.
\end{eqnarray*}
We have thus proved that $\mc{Z}_\mu\,\subset\,\mc{B}_\mu$.

Let now fix $a\in\mc{B}_\mu(\R^d)$. Take $x\in\R^d$ and $|y|\leq1$: for any $n\in\N$ we have the decomposition
\begin{eqnarray*}
a(x+y)+a(x-y)-2a(x) & = & \sum_{m<n}\bigl(\Delta_m a(x+y)+\Delta_m a(x-y)-2\Delta_m a(x)\bigr)\,+ \\
& & \qquad+\,\sum_{m\geq n}\bigl(\Delta_m a(x+y)+\Delta_m a(x-y)-2\Delta_m a(x)\bigr)\,.
\end{eqnarray*}
By Taylor formula up to second order, the former sum can be estimated by the quantity
\begin{eqnarray*}
C\,|y|^2\sum_{m<n}\left\|\nabla^2\Delta_m a\right\|_{L^\infty}  & \leq & C\,|y|^2\sum_{m<n}2^{2m}\,\mu(2^{-m}) \\
 & \leq & C\,|y|^2\,\sum_{m<n}2^{m}\,\Gamma_\mu(2^{m})\;\leq\;C\,|y|^2\,\Gamma_\mu(2^n)\,2^n\,.
\end{eqnarray*}
For the latter, instead, we use directly the property of the dyadic blocks, finding
$$
\sum_{m\geq n}\bigl|\Delta_m a(x+y)+\Delta_m a(x-y)-2\Delta_m a(x)\bigr|\,\leq\,4\sum_{m\geq n}\left\|\Delta_ja\right\|_{L^\infty}
\,\leq\,C\sum_{m\geq n}\Gamma_\mu(2^m)\,2^{-m}\,.
$$
Since $\mu$ is admissible, we have the bound
$$
\sum_{m\geq n}\Gamma_\mu(2^m)\,2^{-m}\,\leq\,C\int_{2^n}^{+\infty}\tau^{-2}\,\Gamma_\mu(\tau)\,d\tau\,\leq\,C\,\Gamma_\mu(2^n)\,2^{-n}\,,
$$
and this finally implies
$$
\bigl|a(x+y)+a(x-y)-2a(x)\bigr|\,\leq\,C\,\mu(2^{-n})\left(|y|^2\,2^{2n}\,+\,1\right)\,.
$$
Now, the choice $|y|\,2^n\,\sim\,1$ completes the proof of the inclusion $\mc{B}_\mu\,\subset\,\mc{Z}_\mu$,
and then of the whole proposition.
\end{proof}

On the other hand, Zygmund conditions imply a control on the first variation of the function, for which one loses a logarithmic factor.
\begin{prop} \label{p:zyg-first}
For any $a\,\in\,\mc{Z}_\mu(\R^d)$, there exists $C_a>0$ such that, for all $|y|\leq1$,
$$
\sup_{x\in\R^d}\bigl|a(x+y)-a(x)\bigr|\,\leq\,C_a\,\mu(|y|)\,\log\!\left(1\,+\,\frac{1}{|y|}\right)\,.
$$
The constant $C_a$ just depends on $\|a\|_{\mc{Z}_\mu}$.
\end{prop}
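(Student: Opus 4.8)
The plan is to reduce everything to the dyadic characterisation $\mc{Z}_\mu=\mc{B}_\mu$ just obtained in Proposition \ref{p:zyg}, so that we may work with the bounds $\|\Delta_j a\|_{L^\infty}\le C\,\mu(2^{-j})$ for all $j\ge -1$, with $C$ controlled by $\|a\|_{\mc{Z}_\mu}$. Writing $a=\sum_{j\ge -1}\Delta_j a$ and fixing $y$ with $|y|\le 1$, I would introduce the splitting index $N=N(|y|)\in\N$ defined by $2^{-N}\le |y|<2^{-N+1}$, so that $N\sim\log_2(1/|y|)$, and decompose $a(x+y)-a(x)=\sum_{j<N}\bigl(\Delta_j a(x+y)-\Delta_j a(x)\bigr)+\sum_{j\ge N}\bigl(\Delta_j a(x+y)-\Delta_j a(x)\bigr)$.

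For the low-frequency part I would use the mean value theorem together with Bernstein's inequality, $\|\nabla\Delta_j a\|_{L^\infty}\le C\,2^j\|\Delta_j a\|_{L^\infty}\le C\,2^j\mu(2^{-j})=C\,\Gamma_\mu(2^j)$, to obtain $|\Delta_j a(x+y)-\Delta_j a(x)|\le C\,|y|\,\Gamma_\mu(2^j)$ for $j\ge 0$ (the block $\Delta_{-1}$ being treated separately, its contribution being $\le C\,|y|\,\|a\|_{L^\infty}$, which is harmless since the monotonicity of $\Gamma_\mu$ forces $\mu(s)\ge\mu(1)\,s$). Since $\Gamma_\mu$ is non-decreasing on $[1,+\infty[$, each of the $N$ terms with $0\le j<N$ is bounded by $\Gamma_\mu(2^N)=2^N\mu(2^{-N})$, so the low-frequency sum is at most $C\,|y|\,N\,2^N\,\mu(2^{-N})$; by the choice of $N$ one has $|y|\,2^N\le 2$ and $\mu(2^{-N})\le\mu(|y|)$, hence this is $\le C\,N\,\mu(|y|)\le C\,\mu(|y|)\,\log\!\bigl(1+1/|y|\bigr)$.

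For the high-frequency part I would use only the crude bound $|\Delta_j a(x+y)-\Delta_j a(x)|\le 2\|\Delta_j a\|_{L^\infty}\le C\,\mu(2^{-j})=C\,2^{-j}\Gamma_\mu(2^j)$ and sum over $j\ge N$. Comparing the series with an integral (again using that $\Gamma_\mu$ is non-decreasing and $\sigma\mapsto\sigma^{-2}$ decreasing), $\sum_{j\ge N}2^{-j}\Gamma_\mu(2^j)\le C\int_{2^N}^{+\infty}\sigma^{-2}\Gamma_\mu(\sigma)\,d\sigma$, and the admissibility condition of Definition \ref{d:mod-cont} bounds the right-hand side by $C\,2^{-N}\Gamma_\mu(2^N)=C\,\mu(2^{-N})\le C\,\mu(|y|)$. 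Adding the two contributions and using $\log(1+1/|y|)\ge\log 2$ for $|y|\le 1$ yields the claimed inequality, with $C_a$ depending only on $\|a\|_{\mc{Z}_\mu}$.

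The Bernstein estimate and the telescoping are routine; the one step I expect to require care is the high-frequency tail, where the passage from the dyadic sum to the integral and the use of the summability hypothesis on $\Gamma_\mu$ are precisely what prevents a \emph{second} logarithmic loss — the single factor $\log(1+1/|y|)$ comes solely from counting the $O(\log_2(1/|y|))$ low-frequency blocks. I would also double-check the edge cases where $|y|$ is close to $1$ and the $j=-1$ term, but these are easily absorbed into the constant.
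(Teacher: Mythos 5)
Your argument is correct and follows essentially the same route as the paper: the dyadic splitting at $n\sim\log_2(1/|y|)$, the mean value theorem plus Bernstein giving $\|\nabla\Delta_j a\|_{L^\infty}\lesssim\Gamma_\mu(2^j)$ for the low frequencies, and the admissibility condition controlling the high-frequency tail $\sum_{j\geq n}\Gamma_\mu(2^j)\,2^{-j}\lesssim\mu(2^{-n})$. Your explicit treatment of the $\Delta_{-1}$ block and of the edge case $|y|$ near $1$ is a minor refinement of details the paper leaves implicit.
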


\begin{proof}
Analogously to the previous proof, let us write
$$
a(x+y)-a(x)\,=\,\sum_{m<n}\bigl(\Delta_m a(x+y)-\Delta_m a(x)\bigr)\,+\,
\sum_{m\geq n}\bigl(\Delta_m a(x+y)-\Delta_m a(x)\bigr)\,.
$$
As done above, we can estimate then
\begin{eqnarray*}
\bigl|a(x+y)-a(x)\bigr| & \leq & |y|\sum_{m<n}\left\|\nabla\Delta_m a\right\|_{L^\infty}\,+\,
2\sum_{m\geq n}\left\|\Delta_m a\right\|_{L^\infty} \\
& \leq & C\,|y|\sum_{m<n}\Gamma_\mu(2^m)\,+\,C\sum_{m\geq n}\Gamma_\mu(2^m)\,2^{-m} \\
& \leq & C\,\Gamma_\mu(2^n)\left(|y|\,n\,+\,2^{-n}\right)\;=\;C\,\mu(2^{-n})\left(2^n\,n\,|y|\,+\,1\right)\,.
\end{eqnarray*}
Again, the choice $n\,\sim\,\log_2\bigl(1/|y|\bigr)$ completes the proof of the proposition.
\end{proof}

Finally, let us present the analogue of Lemma \ref{l:comm-less} for second order regularity hypotheses. This result will be fundamental in proving Theorem \ref{t:sing_var}.

\begin{lemma} \label{l:comm-zyg}
Let $\theta\in\mc{C}^1(\R^d)$ such that $\bigl(1+|\,\cdot\,|\bigr)\what{\theta}\,\in\,L^1$, and let $\mu$ be an admissible modulus of
continuity for which \eqref{cond:mu} holds true. Then, there exist a constant $C$ and a $\lambda_0>0$ such that, for any function
$a\in\mc{Z}_{\mu}(\R^d)$ and any $f\in L^p(\R^d)$ and for all $\lambda\geq\lambda_0$, one has
$$
\left\|\bigl[\theta(\lambda^{-1}D),a\bigr]f\right\|_{L^p}\,\leq\,
C\,\mu(\lambda^{-1})\,\log(1+\lambda)\,\left\|a\right\|_{\mc{Z}_\mu}\,\|f\|_{L^p}\,.
$$
The constant $C$ only depends on the $L^1$ norms of the functions $k:=\mc{F}_\xi^{-1}\theta$ and $|\cdot|\,k$, while $\lambda_0$
just depends on $\mu$.
\end{lemma}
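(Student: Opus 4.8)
The plan is to split the coefficient $a$ by a Littlewood--Paley decomposition at a threshold $2^N$ comparable to the frequency parameter $\lambda$, and then to treat the two pieces by different mechanisms: the low-frequency part $S_Na$ is Lipschitz and is handled by the classical commutator estimate, whereas the high-frequency part $(\Id-S_N)a$ is small in $L^\infty$ and is handled crudely, using only the uniform $L^p$-boundedness of $\theta(\lambda^{-1}D)$. Precisely, set $\lambda_0:=2$ and, for $\lambda\geq\lambda_0$, choose $N\in\N$ with $2^{N-1}<\lambda\leq 2^N$ (so $N\geq1$ and $N\sim\log_2\lambda$), and write $\bigl[\theta(\lambda^{-1}D),a\bigr]f=\bigl[\theta(\lambda^{-1}D),S_Na\bigr]f+\bigl[\theta(\lambda^{-1}D),(\Id-S_N)a\bigr]f$.

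For the low-frequency term, $S_Na\in W^{1,\infty}$, so Lemma~\ref{l:commutator} bounds it by $C\,\lambda^{-1}\,\|\nabla S_Na\|_{L^\infty}\,\|f\|_{L^p}$. By Bernstein's inequality and the dyadic characterisation of $\mc{Z}_\mu$ in Proposition~\ref{p:zyg}, $\|\nabla\Delta_m a\|_{L^\infty}\leq C\,2^m\mu(2^{-m})\,\|a\|_{\mc{Z}_\mu}=C\,\Gamma_\mu(2^m)\,\|a\|_{\mc{Z}_\mu}$; summing over the (at most $N+1$) indices $m\leq N-1$ and using that $\Gamma_\mu$ is non-decreasing yields
\[
\|\nabla S_Na\|_{L^\infty}\;\leq\;C\,N\,\Gamma_\mu(2^N)\,\|a\|_{\mc{Z}_\mu}\;\leq\;C\,\lambda\,\log(1+\lambda)\,\mu(\lambda^{-1})\,\|a\|_{\mc{Z}_\mu}\,,
\]
where I used $2^N\leq 2\lambda$ and $\mu(2^{-N})\leq\mu(\lambda^{-1})$ by monotonicity of $\mu$ (the lowest block being absorbed trivially since $\lambda\mu(\lambda^{-1})=\Gamma_\mu(\lambda)\geq\Gamma_\mu(1)>0$). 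Multiplying by $\lambda^{-1}$ gives exactly the claimed bound for this piece.

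For the high-frequency term I use no cancellation: expanding the commutator and using that $\theta(\lambda^{-1}D)$ is bounded on $L^p$ with norm $\leq\|k\|_{L^1}$ uniformly in $\lambda$ (here $k:=\mc{F}_\xi^{-1}\theta$), it is $\leq C\,\|(\Id-S_N)a\|_{L^\infty}\,\|f\|_{L^p}$. Proposition~\ref{p:zyg} again gives $\|(\Id-S_N)a\|_{L^\infty}\leq\sum_{m\geq N}\|\Delta_m a\|_{L^\infty}\leq C\sum_{m\geq N}\mu(2^{-m})\,\|a\|_{\mc{Z}_\mu}$, and comparing $\sum_{m\geq N}2^{-m}\Gamma_\mu(2^m)$ with $\int_{2^{N-1}}^{+\infty}\sigma^{-2}\Gamma_\mu(\sigma)\,d\sigma$ and invoking the admissibility integral condition bounds this by $C\,\mu(2^{1-N})\leq C\,\mu(\lambda^{-1})$, the last step using the elementary doubling property $\mu(2s)\leq 2\mu(s)$ of admissible moduli (a consequence of $\Gamma_\mu$ being non-decreasing). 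Hence this contribution is $\leq C\,\mu(\lambda^{-1})\,\|a\|_{\mc{Z}_\mu}\,\|f\|_{L^p}$ --- in fact without the logarithmic factor --- and adding the two pieces finishes the proof. Note that condition~\eqref{cond:mu} is not needed for the inequality itself, but it is exactly what guarantees that the right-hand side tends to $0$ as $\lambda\to+\infty$ (which is how the lemma is used later); alternatively one may argue through the embedding $\mc{Z}_\mu\hookrightarrow\mc{C}_{\wtilde\mu}$ of Proposition~\ref{p:zyg-first} and then Lemma~\ref{l:comm-less} with modulus $\wtilde\mu$, in which case \eqref{cond:mu} is precisely what makes $\wtilde\mu$ an admissible modulus of continuity.

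The only delicate point is the bookkeeping with admissible moduli: choosing $N$ so that $2^N$ and $\lambda$ are interchangeable in every estimate, and handling $\Gamma_\mu$ monotone, $\mu(2s)\leq 2\mu(s)$, and the series-to-integral comparison carefully, so that the single logarithmic loss $\log(1+\lambda)$ appears only once --- from summing the $\sim\log_2\lambda$ low-frequency gradient bounds --- and is not reproduced in the high-frequency estimate.
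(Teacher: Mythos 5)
Your proof is correct, but it follows a genuinely different route from the paper's. The paper does not decompose the coefficient at all: it represents the commutator through its convolution kernel, $\bigl[\theta(\lambda^{-1}D),a\bigr]f(x)=\lambda^d\int k\bigl(\lambda(x-y)\bigr)\,f(y)\,\bigl(a(x)-a(y)\bigr)\,dy$, invokes the first-difference estimate of Proposition \ref{p:zyg-first} (where the logarithmic loss is already encoded in $\wtilde{\mu}$), and then reruns the kernel argument of Lemma \ref{l:comm-less} with $\mu$ replaced by $\wtilde{\mu}$, splitting the integral at $|z|=\lambda^{-1}$: condition \eqref{cond:mu} is used on the inner region and the admissibility of $\mu$ on the outer one. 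You instead split $a=S_Na+(\Id-S_N)a$ at frequency $2^N\sim\lambda$, apply the Lipschitz commutator estimate of Lemma \ref{l:commutator} to $S_Na$ (the logarithm arising from the $\sim\log_2\lambda$ dyadic blocks in $\|\nabla S_Na\|_{L^\infty}$, via Proposition \ref{p:zyg} and Bernstein), and bound the high-frequency piece with no cancellation, using only $\|\theta(\lambda^{-1}D)\|_{L^p\to L^p}\leq\|k\|_{L^1}$ together with the tail estimate $\sum_{m\geq N}\mu(2^{-m})\lesssim\mu(2^{-N})$ coming from the admissibility integral condition (plus the doubling property you correctly extract from the monotonicity of $\Gamma_\mu$ and $\mu$). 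Both arguments are sound and give the same bound; yours has the merit of localizing the logarithmic loss in the low-frequency sum, of not using \eqref{cond:mu} for the inequality itself (so the estimate holds for any admissible $\mu$, \eqref{cond:mu} being needed only for the smallness as $\lambda\to+\infty$ exploited later in Proposition \ref{p:regular}), and of an explicit $\lambda_0=2$ in place of a $\mu$-dependent threshold; the price is a constant that also depends mildly on $\mu$ (through $\mu(1)$ when absorbing the block $\Delta_{-1}$, and through the admissibility constant), which is harmless since the paper's own constant, once Proposition \ref{p:zyg-first} is unwound, carries analogous hidden dependencies. Only your closing aside is slightly imprecise: \eqref{cond:mu} guarantees that $\wtilde{\mu}$ is a modulus of continuity, but not that it is admissible, so one cannot literally quote Lemma \ref{l:comm-less} with modulus $\wtilde{\mu}$; this is precisely why the paper repeats that proof with $\wtilde{\mu}$ rather than citing the lemma as a black box.
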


\begin{proof}
We start by writing the identity
$$
\bigl[\theta(\lambda^{-1}D),a\bigr]f\,=\,\lambda^d\int_\R^dk\bigl(\lambda(x-y)\bigr)\,f(y)\,\bigl(a(x)\,-\,a(y)\bigr)\,dy\,.
$$
Therefore, by use of Proposition \ref{p:zyg-first} above and Young inequality, we are reconducted to bound
$$
\lambda^d\,\left\|k(\lambda\,\cdot\,)\,\wtilde{\mu}(\,|\cdot|\,)\right\|_{L^1}\,=\,
\lambda^d\,\int_{\R^d}|k|(\lambda\,z)\,\wtilde{\mu}(|z|)\,dz\,,
$$
where $\wtilde{\mu}$ has been defined in \eqref{cond:mu}. We split the integral in the regions
$\left\{|z|\leq\lambda^{-1}\right\}$ and $\left\{|z|\geq\lambda^{-1}\right\}$.
For the former term, if $\lambda$ is big enough, by condition \eqref{cond:mu} we have
$$
\lambda^d\,\int_{|z|\leq \lambda^{-1}}|k|(\lambda\,z)\,\wtilde{\mu}(|z|)\,dz\,\leq\,\wtilde{\mu}(\lambda^{-1})\,\|k\|_{L^1}\,\leq\,
C\,\mu(\lambda^{-1})\,\log(1+\lambda)\,.
$$
For the latter term, instead, we have the estimate
\begin{eqnarray*}
& &  \lambda^d\,\int_{|z|\geq \lambda^{-1}}|k|(\lambda\,z)\,\wtilde{\mu}(|z|)\,dz\;=\;
\lambda^d\,\int_{|z|\geq \lambda^{-1}}|k|(\lambda\,z)\,\Gamma_\mu(|z|^{-1})\,|z|\,\log\left(1+\frac{1}{|z|}\right)\,dz \\
& & \qquad\qquad\qquad\qquad \leq C\,\Gamma_\mu(\lambda)\,\lambda^{-1}\,\log(1+\lambda)\,\left\|\;|\cdot|\,k\right\|_{L^1}\;\leq\;
C\,\mu(\lambda^{-1})\,\log(1+\lambda)\,.
\end{eqnarray*}

The lemma is hence proved.
\end{proof}

\subsection{Convergence by compensated compactness} \label{ss:proof}

In this section we start proving Theorem \ref{t:sing_var}. The argument is analogous to the one given in \cite{F_2016} for moduli of continuity. There is only one point where the Zygmund condition
comes into play, i.e. in Proposition \ref{p:regular} below, for which we have to use Lemma \ref{l:comm-zyg}.

Nonetheless, for reader's convenience, we will give here most of the details. Indeed, this allows us to describe propagation of waves.
As pointed out in Subsection \ref{ss:i_rot}, here we will study interactions of acoustic, Poincar\'e and Rossby waves: the first ones are due to the the compressible part
of $u$, the second ones to the vertical component of $u$ and the third ones to the variations of the axis.

\medbreak
By Proposition \ref{p:TP}, we have identified in \eqref{eq:sing-op} the singular perturbation operator
$\mc{A}$, which has variable coefficients. So, spectral analysis tools (employed in \cite{F_2015} for constant rotation axis) are out of use here. Hence,
in order to prove convergence in the weak formulation of our equations, we have to resort then to a compensated compactness argument.

Let us consider tests functions $\phi\,\in\,\mc{D}\bigl([0,T[\,\times\Omega\bigr)$ and
$\psi\,\in\,\mc{D}\bigl([0,T[\,\times\Omega;\R^3\bigr)$ such that the couple $(\phi,\psi)$ belongs to
${\rm Ker}\,{\mc A}$. Recall that, by Proposition \ref{p:TP}, they satisfy
$$
\div\psi\,=\,0\qquad\mbox{ and }\qquad \mf{c}(x^h)\,e^3\times\psi\,+\,\nabla\bigl(\Id-\Delta\bigr)\phi\,=\,0\,.
$$
In particular, $\psi=\bigl(\psi^h,0\bigr)$ and $\phi$ just depend on the horizontal variable $x^h\in\R^2$ and they are linked
by the relation $\mf{c}\,\psi^h\,=\,\nabla^\perp_h\bigl(\Id-\Delta_h\bigr)\phi$. Finally, 
we infer also that $\nabla^\perp_h\bigl(\Id-\Delta_h\bigr)\phi\cdot\nabla_h\mf{c}\,=\,0$.

First of all, we evaluate the momentum equation on such a $\psi$: taking into account the previous properties, we end up with
\begin{eqnarray}
& & \hspace{-0.7cm} \int_\Omega\rho_{0,\veps}\,u_{0,\veps}\cdot\psi(0)\,dx\;=\; 
\int^T_0\!\!\int_\Omega\biggl(-\rho_\veps\,u_\veps\cdot\d_t\psi\,-\,\rho_\veps\,u_\veps\otimes u_\veps:\nabla\psi\,+ \label{eq:sing_weak} \\
& & \qquad
+\,\nu\,\rho_\veps\,Du_\veps:\nabla\psi\,+\,\frac{1}{\veps^2}\,\Delta\rho_\veps\,\nabla\rho_\veps\cdot\psi\,+\,
\frac{\mf{c}(x^h)}{\veps}\,e^3\times\rho_\veps\,u_\veps\cdot\psi\biggr)dx\,dt. \nonumber
\end{eqnarray}
The $\d_t$ and viscosity terms do not present any difficulty in passing to the limit. On the other hand, the rotation
term can be handled by use of the weak form of the mass equation, tested on
$\wtilde{\phi}\,=\,\bigl(\Id-\Delta_h\bigr)\phi$: we get
\begin{eqnarray*}
\frac{1}{\veps}\int^T_0\!\!\int_\Omega\mf{c}(x^h)\,e^3\times\rho_\veps\,u_\veps\cdot\psi & = & 
-\,\frac{1}{\veps}\int^T_0\!\!\int_\Omega\mf{c}(x^h)\,\rho_\veps\,u^h_\veps\cdot\left(\psi^h\right)^\perp \\
& = & 
-\int_\Omega r_{0,\veps}\,\wtilde{\phi}(0)\,-\,\int^T_0\!\!\int_\Omega r_\veps\,\d_t\wtilde{\phi}\,,
\end{eqnarray*}
which obviously converges in the limit $\veps\ra0$.

In order to deal with the transport and the capillarity terms, we want to use the structure of the system. Therefore, first of all we need
to introduce a regularization of our solutions.

\subsubsection{Regularization and description of the oscillations}

Let us set $V_\veps\,:=\,\rho_\veps\,u_\veps$. We can write system \eqref{eq:NSK-sing} in the form
\begin{equation} \label{eq:ac-w_c}
\begin{cases}
\veps\,\d_tr_\veps\,+\,\div\,V_\veps\,=\,0 \\[1ex]
\veps\,\d_tV_\veps\,+\,\Bigl(\mf{c}(x^h)\,e^3\times V_\veps\,+\,\nabla\bigl(\Id\,-\,\Delta\bigr)r_\veps\Bigr)\,=\,\veps\,f_\veps\,,
\end{cases}
\end{equation}
where we have defined $f_\veps$ by the formula
\begin{eqnarray}
f_\veps & := & -\,\div\left(\rho_\veps u_\veps\otimes u_\veps\right)\,+\,\nu\,\div\left(\rho_\veps Du_\veps\right)\,- 
\label{eq:f_veps} \\
& & \qquad -\,\frac{1}{\veps^2}\nabla\Bigl(\Pi(\rho_\veps)-\Pi(1)-\Pi'(1)\left(\rho_\veps-1\right)\Bigr)\,+\,
\frac{1}{\veps^2}\bigl(\rho_\veps-1\bigr)\,\nabla\Delta\rho_\veps\,. \nonumber
\end{eqnarray}
Equations \eqref{eq:ac-w_c} have to be read in the weak sense, of course. In particular, from writing
\begin{eqnarray*}
\langle f_\veps,\psi\rangle & := & \int_\Omega\biggl(\rho_\veps u_\veps\otimes u_\veps:\nabla\psi\,-\,
\nu\,\rho_\veps Du_\veps:\nabla\psi\,-\,\frac{1}{\veps^2}\,\Delta\rho_\veps\,\nabla\rho_\veps\cdot\psi\,- \\
& & \hspace{-0.5cm} -\,\frac{1}{\veps^2}\,(\rho_\veps-1)\,\Delta\rho_\veps\,\div\psi\,+\,
\frac{1}{\veps^2}\Bigl(\Pi(\rho_\veps)-\Pi(1)-\Pi'(1)\left(\rho_\veps-1\right)\Bigr)\div\psi\biggr)dx \\
& = & \int_\Omega\Bigl(f^1_\veps:\nabla\psi\,+\,f^2_\veps:\nabla\psi\,+\,f^3_\veps\cdot\psi\,+\,f^4_\veps\,\div\psi\,+\,
f^5_\veps\,\div\psi\Bigr)\,dx
\end{eqnarray*}
and by a systematic use of uniform bounds,
we can easily see that $\bigl(f^1_\veps\bigr)_\veps$ and $\bigl(f^5_\veps\bigr)_\veps$ are uniformly bounded in $L^\infty_T\bigl(L^1\bigr)$,
and so is $\bigl(f^2_\veps\bigr)_\veps$ in $L^2_T\bigl(L^2\bigr)$; finally, $\bigl(f^3_\veps\bigr)_\veps$ and $\bigl(f^4_\veps\bigr)_\veps$
are bounded in $L^2_T\bigl(L^1\bigr)$.

Therefore, we deduce that the family $\bigl(f_\veps\bigr)_\veps$ is uniformly bounded in the space
$L^2_T\bigl(H^{-1}(\Omega)\,+\,W^{-1,1}(\Omega)\bigr)$, and then in particular in $L^2_T\bigl(H^{-s}(\Omega)\bigr)$ for any
$s>5/2$.

\medbreak
Now, for any $M>0$, let us consider the low-frequency cut-off operator $S_M$ of a Littlewood-Paley decomposition,
as introduced in \eqref{eq:low-freq} above, and let us define
$$
r_{\veps,M}\,:=\,S_Mr_\veps\qquad\qquad\mbox{ and }\qquad\qquad
V_{\veps,M}\,:=\,S_MV_\veps\,.
$$
The following result hods true.
\begin{prop} \label{p:regular}
For any fixed time $T>0$ and compact set $K\subset\Omega$, the  following convergence properties hold, in the limit for $M\longrightarrow+\infty$:
\begin{equation} \label{reg:convergence}
\begin{cases}
\; \sup_{\veps>0}\left\|r_\veps\,-\,r_{\veps,M}\right\|_{L^\infty_T(H^s(K))\,\cap\,L^2_T(H^{1+s}(K))}\,\longrightarrow\,0
\qquad\qquad\forall\; s<1 \\[1ex]
\; \sup_{\veps>0}\left\|V_\veps\,-\,V_{\veps,M}\right\|_{L^2_T(H^{-s}(K))}\,\longrightarrow\,0
\qquad\qquad\forall\; s>0\,.
\end{cases}
\end{equation}

Moreover, for any $M>0$, the couple $\bigl(r_{\veps,M}\,,\,V_{\veps,M}\bigr)$ satisfies the approximate wave equations
\begin{equation} \label{reg:approx-w}
\begin{cases}
\veps\,\d_tr_{\veps,M}\,+\,\div\,V_{\veps,M}\,=\,0 \\[1ex]
\veps\,\d_tV_{\veps,M}\,+\,\Bigl(\mf{c}(x^h)\,e^3\times V_{\veps,M}\,+\,\nabla\bigl(\Id-\Delta\bigr)r_{\veps,M}\Bigr)\,=\,
\veps\,f_{\veps,M}\,+\,g_{\veps,M}\,,
\end{cases}
\end{equation}
where $\bigl(f_{\veps,M}\bigr)_{\veps}$ and $\bigl(g_{\veps,M}\bigr)_{\veps}$ are families of smooth functions satisfying
\begin{equation} \label{reg:source}
\begin{cases}
\; \sup_{\veps>0}\left\|f_{\veps,M}\right\|_{L^2_T(H^{s}(K))}\,\leq\,C(s,M)
\qquad\qquad\forall\; s\geq0 \\[1ex]
\; \sup_{\veps>0}\left\|g_{\veps,M}\right\|_{L^2_T(H^1(K))}\,\longrightarrow\,0
\qquad\qquad\mbox{ for }\quad M\ra+\infty\,,
\end{cases}
\end{equation}
where the constant $C(s,M)$ depends on the fixed values of $s\geq0$, $M>0$.
\end{prop}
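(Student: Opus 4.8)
The plan is to read off all three assertions by applying the low-frequency cut-off $S_M$ to the wave formulation \eqref{eq:ac-w_c}, the only genuinely delicate point being the uniform control (in $\veps$) of the Coriolis contribution, which is where the Zygmund hypothesis on $\nabla_h\mf c$ enters. First I would dispose of \eqref{reg:convergence}, which is a soft consequence of the uniform bounds of Subsection \ref{ss:cap-effects}. By Corollaries \ref{c:E} and \ref{c:F} the family $(r_\veps)_\veps$ is bounded, uniformly in $\veps$, in $L^\infty_T(H^1)\cap L^2_T(H^2)$; since for $s<1$ one has $\|(\Id-S_M)g\|_{H^s}\le C\,2^{-M(1-s)}\|g\|_{H^1}$ and $\|(\Id-S_M)g\|_{H^{1+s}}\le C\,2^{-M(1-s)}\|g\|_{H^2}$, the first line of \eqref{reg:convergence} follows at once, uniformly in $\veps$. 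Likewise, by Subsection \ref{sss:bounds} the family $V_\veps=\rho_\veps u_\veps$ is bounded in $L^2_T(L^2)$, and $\|(\Id-S_M)g\|_{H^{-s}}\le C\,2^{-Ms}\|g\|_{L^2}$ for every $s>0$, which gives the second line (restricting to a compact $K$ only improves the estimates).

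Next I would derive the approximate system \eqref{reg:approx-w}. Applying $S_M$ to \eqref{eq:ac-w_c} and using that $S_M$ commutes with $\d_t$, $\div$, $\nabla$ and $\Delta$, the mass equation becomes the first line of \eqref{reg:approx-w} directly. In the momentum equation one writes
$$
S_M\bigl(\mf c(x^h)\,e^3\times V_\veps\bigr)\;=\;\mf c(x^h)\,e^3\times V_{\veps,M}\;+\;e^3\times[S_M,\mf c]\,V_\veps\,,
$$
so that \eqref{reg:approx-w} holds with the natural choices $f_{\veps,M}:=S_M f_\veps$ and $g_{\veps,M}:=-\,e^3\times[S_M,\mf c]\,V_\veps$. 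The bound on $f_{\veps,M}$ is then routine: since $(f_\veps)_\veps$ is bounded in $L^2_T\bigl(H^{-1}+W^{-1,1}\bigr)\subset L^2_T(H^{-s_0})$ for any fixed $s_0>5/2$, and a spectral truncation at level $2^M$ converts negative regularity into positive regularity at the cost of a power of $2^M$, one gets $\|f_{\veps,M}\|_{L^2_T(H^s)}\le C\,2^{M(s+s_0)}\|f_\veps\|_{L^2_T(H^{-s_0})}\le C(s,M)$ uniformly in $\veps$, which is the first estimate in \eqref{reg:source}.

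The heart of the matter is then the Coriolis term $g_{\veps,M}=-\,e^3\times[S_M,\mf c]V_\veps$. Its $L^2$ part is easy: since $\mf c\in W^{1,\infty}(\R^2)$, Lemma \ref{l:commutator} gives $\|[S_M,\mf c]V_\veps\|_{L^2}\le C\,2^{-M}\|\nabla_h\mf c\|_{L^\infty}\|V_\veps\|_{L^2}$, which is uniform in $\veps$ (by $\|V_\veps\|_{L^2_T(L^2)}\le C$) and vanishes as $M\to+\infty$. To reach $H^1$ I would differentiate and split, for $j\in\{1,2,3\}$,
$$
\d_j[S_M,\mf c]V_\veps\;=\;[S_M,\d_j\mf c]\,V_\veps\;+\;[S_M,\mf c]\,\d_jV_\veps\,.
$$
In the first term the coefficient is $\d_j\mf c\in\mc{Z}_\mu$, and here — and this is the only place the Zygmund assumption is used — Lemma \ref{l:comm-zyg} yields
$$
\bigl\|[S_M,\d_j\mf c]\,V_\veps\bigr\|_{L^2}\;\le\;C\,\mu(2^{-M})\,\log(1+2^M)\,\|\d_j\mf c\|_{\mc{Z}_\mu}\,\|V_\veps\|_{L^2}\;\le\;C\,\wtilde{\mu}(2^{-M})\,\|V_\veps\|_{L^2}\,,
$$
which tends to $0$ precisely because of condition \eqref{cond:mu} (this logarithmic loss is exactly what forces \eqref{cond:mu}; in \cite{F_2016}, with $\nabla_h\mf c$ only in $\mc{C}_\mu$, Lemma \ref{l:comm-less} suffices and no such condition is needed). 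The second term is handled by combining the order-one regularizing effect of the commutator $[S_M,\mf c]$ with the $\veps$-uniform bounds on the derivatives of the (weighted) momentum collected in \eqref{sing-b:D_rho-u} and Subsection \ref{sss:bounds}. Together with the $L^2$ bound above, this gives the second part of \eqref{reg:source}.

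I expect this last step — producing, at the level of one derivative, a commutator estimate for the Coriolis term that is simultaneously uniform in $\veps$ and vanishing in $M$ — to be the main obstacle. It is the concrete manifestation, anticipated in Section \ref{s:math_prop}, of the (apparent) incompatibility between the Coriolis force and the BD structure: it is what makes the low regularity of $\mf c$ genuinely costly and what dictates the use of the refined commutator Lemma \ref{l:comm-zyg} together with the extra smallness condition \eqref{cond:mu} on the modulus of continuity.
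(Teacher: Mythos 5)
Your argument is correct and follows essentially the same route as the paper: the same definitions $f_{\veps,M}=S_Mf_\veps$ and $g_{\veps,M}=[\mf{c},S_M]\bigl(e^3\times V_\veps\bigr)$, the same Leibniz splitting of $\d_jg_{\veps,M}$, Lemma \ref{l:commutator} for the $L^2$ part, and Lemma \ref{l:comm-zyg} with condition \eqref{cond:mu} for the term carrying $\d_j\mf{c}$, with \eqref{reg:convergence} read off the uniform bounds exactly as in the paper. The one step you leave slightly implicit, the control of $[S_M,\mf{c}]\,\d_j\bigl(e^3\times V_\veps\bigr)$, is precisely Lemma \ref{l:comm_L^p} with $(p_1,p_2,q)=(1,2,2)$ combined with $\bigl(DV_\veps\bigr)_\veps\subset L^2_T\bigl(L^1_{loc}\bigr)$ from \eqref{sing-b:D_rho-u}, which yields the uniform bound $C\,2^{-M}$ on compact sets, as in the paper's proof.
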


\begin{proof}
Keeping in mind the characterization of $H^s$ spaces in terms of Littlewood-Paley decomposition (see Chapter 2 of \cite{B-C-D}),
properties \eqref{reg:convergence} are straightforward consequences of the uniform bounds established in Paragraphs \ref{sss:BD} and \ref{sss:bounds}.

Next, applying operator $S_M$ to \eqref{eq:ac-w_c} immediately gives us system \eqref{reg:approx-w},
where, denoting by $[\mc{P},\mc{Q}]$ the commutator between two operators $\mc P$ and $\mc Q$, we have set
$$
f_{\veps,M}\,:=\,S_Mf_\veps\qquad\qquad\mbox{ and }\qquad\qquad 
g_{\veps,M}\,:=\,\bigl[\mf{c}(x^h),S_M\bigr]\bigl(e^3\times V_\veps\bigr)\,.
$$
By these definitions and the uniform bounds on $\bigl(f_\veps\bigr)_\veps$, it is easy to verify the first property in \eqref{reg:source}.
As for the second one, we need to proceed carefully.

First of all, by uniform bounds and Lemma \ref{l:commutator} we get
$$
\sup_{\veps>0}\left\|g_{\veps,M}\right\|_{L^2_T(L^2)}\,\leq\,C\,2^{-M}\,.
$$
As for the gradient, for any $1\leq j\leq 3$ we can write
$$
\d_jg_{\veps,M}\,=\,\left[\mf{c}\,,\,S_M\right]\d_j\left(e^3\times V_\veps\right)\,+\,
\left[\d_j\mf{c}\,,\,S_M\right]\left(e^3\times V_\veps\right)\,.
$$
In order to control the former term, we use Lemma \ref{l:comm_L^p} with $p_2=q=2$ and $p_1=1$. Recalling that, by \eqref{sing-b:D_rho-u},
$\left(DV_\veps\right)_\veps\subset L^2_T(L^1_{loc})$, for any compact $K\subset\Omega$ we get
$$
\sup_{\veps>0}\left\|\left[\mf{c}\,,\,S_M\right]\d_j\left(e^3\times V_\veps\right)\right\|_{L^2_T(L^2(K))}\,\leq\,C\,2^{-M}\,.
$$
For the latter term, instead, Lemma \ref{l:comm-zyg} gives us
$$
\sup_{\veps>0}\left\|\left[\d_j\mf{c}\,,\,S_M\right]\left(e^3\times V_\veps\right)\right\|_{L^2_T(L^2)}\,\leq\,C\,\mu(2^{-M})\,\log\bigl(1+2^{M}\bigr)\,.
$$

In the end, choosing $\eta(M)=\max\left\{2^{-M},\mu(2^{-M})\,\log\bigl(1+2^{M}\bigr)\right\}$ (which goes to $0$ when $M\ra+\infty$), we get
$$
\sup_{\veps>0}\left\|g_{\veps,M}\right\|_{L^2_T(H^1_{loc})}\,\leq\,C\,\eta(M)
$$
for a suitable constant $C>0$, and this completes the proof of the proposition.
\end{proof}

We also have an important decomposition for the approximated velocity fields. We refer to \cite{F_2016} for the proof.
\begin{prop} \label{p:decomp}
The following decompositions hold true: 
$$
V_{\veps,M}\,=\,\mbb{V}_{\veps,M}\,+\,\veps\,\mc{V}_{\veps,M}\qquad\qquad\mbox{ and }\qquad\qquad
DV_{\veps,M}\,=\,\mbb{D}_{\veps,M}\,+\,\veps\,\mc{D}_{\veps,M}\,,
$$
where, for any compact set $K\subset\Omega$ and any $s\geq0$ one has
$$
\begin{cases}
\left\|\mbb{V}_{\veps,M}\right\|_{L^2_T\bigl(L^2(K)\cap L^3(K)\bigr)}\,+\,
\left\|\mbb{D}_{\veps,M}\right\|_{L^2_T\bigl(L^2(K)\bigr)}\,\leq\,C(K) \\[1ex]
\left\|\mc{V}_{\veps,M}\right\|_{L^2_T\bigl(H^s(K)\bigr)}\,+\,
\left\|\mc{D}_{\veps,M}\right\|_{L^2_T\bigl(H^s(K)\bigr)}\,\leq\,C(K,s,M)\,,
\end{cases}
$$
for suitable positive constants $C(K)$, $C(K,s,M)$ depending just on the quantities in the brackets.
\end{prop}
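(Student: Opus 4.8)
\medskip
\noindent\textit{Sketch of the argument.} The plan is to obtain both decompositions by the same mechanism: isolate inside $\rho_\veps u_\veps$ and $D(\rho_\veps u_\veps)$ a \emph{leading} part which is bounded, \emph{uniformly} in $\veps$, in a space $L^2_T(L^p)$ with $p>1$ (so that $S_M$ acts on it with a bound independent of $M$), and a \emph{remainder} which carries a factor $\veps$ because it is built from one of the $O(\veps)$ quantities $\rho_\veps-1$, $\sqrt{\rho_\veps}-1$, $\rho_\veps^{-1/2}-1$, $\nabla\rho_\veps$, all bounded by $C\veps$ in $L^\infty_T(L^2(\Omega))$ (by Remark \ref{r:rho_L^2} and Corollary \ref{c:E}, using $\rho_\veps=1+\veps r_\veps$ with $r_\veps$ bounded in $L^\infty_T(H^1)$ and $|\sqrt{b}-1|\le|b-1|$). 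Then I would use the smoothing effect of $S_M$ on the remainder: since $S_M$ is convolution with a rescaled Schwartz function, Bernstein-type inequalities (Chapter 2 of \cite{B-C-D}), together with a harmless localization exploiting the fast decay of the kernel $\mc{F}^{-1}\chi$, give, for all $s\ge0$, all $p\ge1$ and all compacts $K\subset K'\subset\Omega$,
$$
\left\|S_M h\right\|_{L^2_T(H^s(K))}\,\leq\,C(s,M,K)\,\left\|h\right\|_{L^2_T(L^p(K'))}\,,
$$
while, being a spatial Fourier multiplier with symbol $\chi(2^{-M}\,\cdot\,)$, $S_M$ is bounded on $L^p$ for $1<p<\infty$ uniformly in $M$.

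Concretely, for $V_{\veps,M}=S_M(\rho_\veps u_\veps)$ I would set $\mbb{V}_{\veps,M}:=S_M\bigl(\rho_\veps^{3/2}u_\veps\bigr)$ and $\veps\,\mc{V}_{\veps,M}:=S_M\bigl((\rho_\veps^{-1/2}-1)\,\rho_\veps^{3/2}u_\veps\bigr)$, which sum to $V_{\veps,M}$. By Paragraph \ref{sss:bounds} the family $\bigl(\rho_\veps^{3/2}u_\veps\bigr)_\veps$ is bounded in $L^2_T(L^3)$, so $\mbb{V}_{\veps,M}$ is bounded in $L^2_T(L^3(K))\hookrightarrow L^2_T(L^2(K)\cap L^3(K))$ uniformly in $\veps$ and $M$; and since $\rho_\veps^{-1/2}-1$ is $O(\veps)$ in $L^\infty_T(L^2)$, H\"older's inequality gives $(\rho_\veps^{-1/2}-1)\,\rho_\veps^{3/2}u_\veps$ bounded by $C\veps$ in $L^2_T(L^{6/5})$, whence the bound on $\mc{V}_{\veps,M}$ via the displayed Bernstein estimate. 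For $DV_{\veps,M}=S_M\,D(\rho_\veps u_\veps)$ I would expand $D(\rho_\veps u_\veps)=\rho_\veps\,Du_\veps+\nabla\rho_\veps\otimes u_\veps+u_\veps\otimes\nabla\rho_\veps$ and further split $\rho_\veps\,Du_\veps=\sqrt{\rho_\veps}\,Du_\veps+(\sqrt{\rho_\veps}-1)\,\sqrt{\rho_\veps}\,Du_\veps$. The leading part is $\mbb{D}_{\veps,M}:=S_M\bigl(\sqrt{\rho_\veps}\,Du_\veps\bigr)$, bounded in $L^2_T(L^2)$ uniformly by Corollary \ref{c:E}, while all the remaining pieces are products of an $O(\veps)$ factor in $L^\infty_T(L^2)$ (one of $\sqrt{\rho_\veps}-1$ or $\nabla\rho_\veps=\veps\,\nabla r_\veps$) with a uniformly bounded factor in $L^2_T(L^p_{loc})$, $p\ge1$ (namely $\sqrt{\rho_\veps}\,Du_\veps$, or the two pieces $u_\veps^{(1)}\in L^\infty_T(L^2)$ and $u_\veps^{(2)}\in L^2_T(L^{3/2}_{loc})$ of the decomposition of $u_\veps$ recalled in Paragraph \ref{sss:bounds}); hence they are bounded by $C\veps$ in $L^2_T(L^p_{loc})$ and, after applying $S_M$, yield $\veps\,\mc{D}_{\veps,M}$ with $\mc{D}_{\veps,M}$ bounded in $L^2_T(H^s(K))$, the constant depending on $K$, $s$, $M$.

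The routine part is the bookkeeping of H\"older exponents. The genuine difficulty — and the reason for keeping the density weights attached to the velocity instead of expanding $\rho_\veps=1+\veps r_\veps$ naively — is that no uniform control on $u_\veps$ alone is available where the density degenerates: a term such as $\veps\,\nabla r_\veps\otimes u_\veps$ has to be treated by splitting $u_\veps=u_\veps^{(1)}+u_\veps^{(2)}$ and by using that $\nabla\rho_\veps$ lies in $L^\infty_T(L^2)$ (\emph{not} merely $L^2_T(L^2)$), which is exactly what keeps the product in $L^2_T$ in time. One must therefore invoke the full list of uniform bounds of Paragraph \ref{sss:bounds}, in particular the BD estimates for $\nabla\sqrt{\rho_\veps}$ and $\nabla^2\rho_\veps$ together with the integrability of $\rho_\veps^{3/2}u_\veps$ and $\sqrt{\rho_\veps}\,Du_\veps$; the only non-cosmetic price is that $\mc{V}_{\veps,M}$ and $\mc{D}_{\veps,M}$ cannot be bounded uniformly in $M$, which is harmless since they appear multiplied by $\veps$. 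Finally, the passage from global to local (i.e. $K$-dependent) estimates is the standard cut-off argument based on the rapid decay of the kernel of $S_M$.
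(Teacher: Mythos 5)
Your overall scheme is the natural one (and the one the paper delegates to \cite{F_2016}): extract a leading part bounded uniformly in $\veps$ and $M$ in $L^2_T(L^2\cap L^3)$, resp.\ $L^2_T(L^2)$, and a remainder carrying a factor $\veps$ in some $L^2_T(L^p(\Omega))$ with $p\geq1$, on which $S_M$ acts with $M$-dependent smoothing. Your treatment of $V_{\veps,M}$ is correct: $S_M(\rho_\veps^{3/2}u_\veps)$ is admissible as $\mbb{V}_{\veps,M}$, and $(\rho_\veps^{-1/2}-1)\,\rho_\veps^{3/2}u_\veps$ is indeed $O(\veps)$ in $L^2_T(L^{6/5})$. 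The gap is in the $D$-part, exactly at the term you yourself flag as the genuine difficulty, $\nabla\rho_\veps\otimes u_\veps$ (and its transpose). With the pairing you propose, the piece $\veps\,\nabla r_\veps\otimes u_\veps^{(2)}$, with $\nabla r_\veps$ bounded in $L^\infty_T(L^2)$ and $u_\veps^{(2)}$ bounded in $L^2_T(L^{3/2})$, lands by H\"older in $L^2_T(L^{6/7})$: since $1/2+2/3=7/6>1$, there is no exponent $p\geq1$ for which your assertion ``bounded by $C\veps$ in $L^2_T(L^p_{loc})$'' holds, and compactness of $K$ does not help (on bounded sets H\"older only lowers exponents). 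One then cannot apply $S_M$ (Young's inequality requires exponents $\geq1$), so the claimed bound $\left\|\mc{D}_{\veps,M}\right\|_{L^2_T(H^s(K))}\leq C(K,s,M)$ does not follow from the estimates you invoke; using instead $\nabla r_\veps\in L^2_T(L^6)$ (Corollary \ref{c:F}) gives only $L^1_T(L^{6/5})$, which fails in the time exponent, and interpolation between the two pairings does not reach $L^2_T$ of a Lebesgue space with exponent $\geq1$ either.

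The term can be closed, but it requires a piece of information you list without using: by the last statement of Corollary \ref{c:F}, $\veps^{-1}(\rho_\veps-1)$ — hence also $\veps^{-1}(\sqrt{\rho_\veps}-1)$, since $|\sqrt{b}-1|\leq|b-1|$ — is bounded in $L^2_T(L^\infty(\Omega))$. Regrouping so that no ``naked'' $u_\veps$ appears,
$$
\nabla\rho_\veps\otimes u_\veps\,=\,\nabla\rho_\veps\otimes\bigl(\sqrt{\rho_\veps}\,u_\veps\bigr)\,+\,2\,\bigl(1-\sqrt{\rho_\veps}\bigr)\,\nabla\sqrt{\rho_\veps}\otimes\bigl(\sqrt{\rho_\veps}\,u_\veps\bigr)\,,
$$
where we used $\nabla\rho_\veps/\sqrt{\rho_\veps}=2\,\nabla\sqrt{\rho_\veps}$, the first piece is $O(\veps)$ in $L^\infty_T(L^1)$ (two $L^2$ factors, with $\nabla\rho_\veps=O(\veps)$ in $L^\infty_T(L^2)$), while the second is $O(\veps)$ in $L^2_T(L^1)$ by pairing $1-\sqrt{\rho_\veps}=O(\veps)$ in $L^2_T(L^\infty)$ with $\nabla\sqrt{\rho_\veps}$ and $\sqrt{\rho_\veps}\,u_\veps$ in $L^\infty_T(L^2)$; applying $S_M$ then yields the stated $L^2_T(H^s(K))$ bound for $\mc{D}_{\veps,M}$, and the rest of your argument goes through. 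A secondary, purely expository point: your displayed estimate $\|S_Mh\|_{L^2_T(H^s(K))}\leq C(s,M,K)\,\|h\|_{L^2_T(L^p(K'))}$ is not true with only local information on the right-hand side (the kernel of $S_M$, however fast it decays, sees $h$ on all of $\Omega$); what you actually need, and what holds, is the global-to-local estimate with $h$ measured in a sum of Lebesgue spaces over the whole domain, which is precisely what the bounds of Paragraph \ref{sss:bounds} provide.
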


Before proceeding, let us introduce some useful notations. More precisely, we recall the following  decomposition: for a vector-field $X$, we write
\begin{equation} \label{dec:vert-av}
X(x)\,=\,\langle X\rangle(x^h)\,+\,\wtilde{X}(x)\,,\quad\qquad\mbox{ where }\quad
\langle X\rangle(x^h)\,:=\,\int_{\mbb{T}}X(x^h,x^3)\,dx^3\,.
\end{equation}
Notice that $\wtilde{X}$ has zero vertical average, and therefore we can write $\wtilde{X}(x)\,=\,\d_3\wtilde{Z}(x)$,
with $\wtilde{Z}$ having zero vertical average as well.
We also set $\wtilde{Z}\,=\,\mc{I}(\wtilde{X})\,=\,\d_3^{-1}\wtilde{X}$.

\subsubsection{The capillarity term}

First of all, let us deal with the surface tension term in \eqref{eq:sing_weak}. Notice that it can be rewritten as
$\;\int^T_0\int_\Omega\Delta r_\veps\,\nabla r_\veps\cdot\psi\;$, for any smooth test function $\psi$.

Thanks to the next lemma, we reconduct ourselves to study the convergence in the case of regular density functions.
\begin{lemma} \label{l:capill_approx}
For any $\psi\,\in\,\mc{D}\bigl([0,T[\,\times\Omega;\R^3\bigr)$, we have
$$
\lim_{M\ra+\infty}\,\limsup_{\veps\ra0}\,\left|\int^T_0\!\!\int_\Omega\Delta r_\veps\;\nabla r_\veps\,\cdot\,\psi\,dx\,dt\,-\,
\int^T_0\!\!\int_\Omega \Delta r_{\veps,M}\; \nabla r_{\veps,M}\,\cdot\,\psi\,dx\,dt\right|\,=\,0\,.
$$
\end{lemma}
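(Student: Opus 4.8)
The plan is to remove, by one integration by parts in space, the two derivatives falling on $r_\veps$ in the capillarity term, thereby rewriting it as a \emph{quadratic} functional of $\nabla r_\veps$ alone. Since $\psi\in\mc{D}\bigl([0,T[\,\times\Omega;\R^3\bigr)$ is compactly supported in $\Omega$, no boundary terms arise, and for any $r$ lying in $L^2_T\bigl(H^2(\Omega)\bigr)$ (in particular for $r_\veps$, by density, and for the smooth truncation $r_{\veps,M}$) one has, using $\bigl(\div(\nabla r\otimes\nabla r)\bigr)_i=\d_i r\,\Delta r+\frac{1}{2}\,\d_i|\nabla r|^2$, the identity
\[
\int_0^T\!\!\int_\Omega\Delta r\,\nabla r\cdot\psi\,dx\,dt\;=\;-\int_0^T\!\!\int_\Omega\nabla r\otimes\nabla r:\nabla\psi\,dx\,dt\;+\;\frac{1}{2}\int_0^T\!\!\int_\Omega|\nabla r|^2\,\div\psi\,dx\,dt\;=:\;\mc{Q}_\psi(\nabla r)\,.
\]

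Next I would exploit bilinearity. Let $K\subset\Omega$ be a compact set containing the spatial projection of $\Supp\psi$. The functional $\mc{Q}_\psi$ is a continuous quadratic form on $L^2\bigl([0,T]\times K\bigr)$, with associated symmetric bilinear form $\mc{B}_\psi$ obeying $\bigl|\mc{B}_\psi(v,w)\bigr|\leq C\,\|\nabla\psi\|_{L^\infty}\,\|v\|_{L^2([0,T]\times K)}\,\|w\|_{L^2([0,T]\times K)}$. Hence the quantity in the statement equals
\[
\mc{Q}_\psi(\nabla r_\veps)-\mc{Q}_\psi(\nabla r_{\veps,M})\;=\;\mc{B}_\psi\bigl(\nabla r_\veps+\nabla r_{\veps,M}\,,\,\nabla(r_\veps-r_{\veps,M})\bigr)\,,
\]
and is therefore bounded by $C\,\|\nabla\psi\|_{L^\infty}\bigl(\|\nabla r_\veps\|_{L^2([0,T]\times K)}+\|\nabla r_{\veps,M}\|_{L^2([0,T]\times K)}\bigr)\,\|\nabla(r_\veps-r_{\veps,M})\|_{L^2([0,T]\times K)}$.

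It then remains to bound the three factors. Since $\nabla r_\veps=\veps^{-1}\nabla\rho_\veps$, Corollary \ref{c:E} shows that $(\nabla r_\veps)_\veps$ is uniformly bounded in $L^\infty_T\bigl(L^2(\Omega)\bigr)$, hence, $[0,T]\times K$ having finite measure, uniformly bounded in $L^2([0,T]\times K)$; as $S_M$ is bounded on $L^2$ uniformly in $M$, the same holds for $\nabla r_{\veps,M}=S_M\nabla r_\veps$. For the last factor, Proposition \ref{p:regular} with the choice $s=0$ gives $\sup_{\veps>0}\|r_\veps-r_{\veps,M}\|_{L^2_T(H^1(K))}\to0$ as $M\to+\infty$, whence $\sup_{\veps>0}\|\nabla(r_\veps-r_{\veps,M})\|_{L^2([0,T]\times K)}\to0$. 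Combining these, the difference in the lemma is $\leq C(\psi)\,\eta(M)$ with $\eta(M)\to0$ independently of $\veps$; taking $\limsup_{\veps\to0}$ and then $\lim_{M\to+\infty}$ concludes.

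The one point that really needs care is that the integration by parts is \emph{not} cosmetic. A direct attack, using $\Delta r_\veps\in L^2_T(L^2)$ and $\nabla r_\veps\in L^\infty_T(L^2)$, would require $\Delta(r_\veps-r_{\veps,M})\to0$ in $L^2_T(L^2(K))$, i.e. the endpoint $s=1$ in Proposition \ref{p:regular}, which is precisely what is unavailable there (only $s<1$). Once one derivative has been moved off each copy of $r_\veps$, the $s=0$ case of that proposition suffices and the argument closes.
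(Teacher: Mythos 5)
Your proof is correct. The note itself does not reproduce a proof of this lemma (it is deferred to \cite{F_2016}), so there is no line-by-line comparison to make; what matters is that your argument closes using only what is established in the paper, and it does: the integration by parts produces no boundary terms since $\psi$ is compactly supported (or, after Remark \ref{r:period-bc}, the problem is periodic in $x^3$) and $\Delta r_\veps\,\nabla r_\veps$ is locally integrable because $r_\veps\in L^2_T(H^2)$ with $\nabla r_\veps\in L^\infty_T(L^2)$; the polarization identity then reduces the difference to the product of a uniform $L^2([0,T]\times K)$ bound on $\nabla r_\veps$ and $\nabla r_{\veps,M}$ (Corollary \ref{c:E}, plus the fact that $S_M$ commutes with $\nabla$ and is bounded on $L^2$ uniformly in $M$) with the smallness of $\nabla(r_\veps-r_{\veps,M})$ in $L^2_T(L^2(K))$, which is exactly Proposition \ref{p:regular} at $s=0$. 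Your closing remark is also the right diagnosis: the endpoint $s=1$ of Proposition \ref{p:regular} is unavailable, so at least one derivative must be transferred to the test function before invoking it; your symmetric rewriting as a quadratic form in $\nabla r$ does this in one stroke (alternatively one could split $\Delta r_\veps\nabla r_\veps-\Delta r_{\veps,M}\nabla r_{\veps,M}$ and integrate by parts only in the term $\Delta(r_\veps-r_{\veps,M})\,\nabla r_\veps\cdot\psi$, using the uniform $L^2_T(L^2)$ bound on $\nabla^2 r_\veps$, which leads to the same conclusion).
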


Then, for any $\psi\,\in\,\mc{D}\bigl([0,T[\,\times\Omega;\R^3\bigr)\,\cap\,{\rm Ker}\,{\mc{A}}$
we have to consider the convergence of the term (pay attention to the signs)
\begin{eqnarray*}
\hspace{-0.3cm} \int^T_0\!\!\int_\Omega\Delta r_{\veps,M}\;\nabla r_{\veps,M}\cdot\psi\,dx\,dt & = &
-\,\int^T_0\!\!\int_\Omega\bigl(\Id-\Delta\bigr)r_{\veps,M}\;\nabla r_{\veps,M}\cdot\psi\,dx\,dt\,+ \\
& & \qquad\qquad\qquad +\,\int^T_0\!\!\int_\Omega r_{\veps,M}\;\nabla r_{\veps,M}\cdot\psi\,dx\,dt\,.
\end{eqnarray*}
Notice that $r_{\veps,M}\,\nabla r_{\veps,M}\,=\,\nabla\left(r_{\veps,M}\right)^2/2$: therefore, since $\div\psi=0$,
by integration by parts we get that the latter item on the right-hand side is identically $0$.

Hence, in the end we have to deal only with the remainder
\begin{eqnarray}
-\,\int^T_0\!\!\int_\Omega\bigl(\Id-\Delta\bigr)r_{\veps,M}\;\nabla r_{\veps,M}\cdot\psi & = & 
-\,\int^T_0\!\!\int_\Omega\bigl(\Id-\Delta_h\bigr)\lan r_{\veps,M}\ran\;\nabla_h\lan r_{\veps,M}\ran\cdot\psi\,- \label{comp-cpt:dens} \\
& & \qquad\qquad
-\,\int^T_0\!\!\int_\Omega\lan\bigl(\Id-\Delta\bigr)\wtilde{r}_{\veps,M}\;\nabla\wtilde{r}_{\veps,M}\ran\cdot\psi\,, \nonumber
\end{eqnarray}
where, using the notations of \eqref{dec:vert-av}, $\wtilde{r}_{\veps,M}$ denotes the mean-free part of $r_{\veps,M}$.

\subsubsection{The convective term} 

We now deal with the convective term. Once again, the first step is to reduce
the study to the case of smooth vector fields $V_{\veps,M}$.

\begin{lemma} \label{l:conv_approx}
For any $\psi\,\in\,\mc{D}\bigl([0,T[\,\times\Omega;\R^3\bigr)$, we have
$$
\lim_{M\ra+\infty}\,\limsup_{\veps\ra0}\,\left|\int^T_0\!\!\int_\Omega\rho_\veps u_\veps\otimes u_\veps:\nabla\psi\,dx\,dt\,-\,
\int^T_0\!\!\int_\Omega V_{\veps,M}\otimes V_{\veps,M}:\nabla\psi\,dx\,dt\right|\,=\,0\,.
$$
\end{lemma}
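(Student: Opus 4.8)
My plan is to interpolate between $\rho_\veps u_\veps\otimes u_\veps$ and its truncation $V_{\veps,M}\otimes V_{\veps,M}$ (with $V_\veps=\rho_\veps u_\veps$ and $V_{\veps,M}=S_MV_\veps$) through two auxiliary families, consecutive ones differing either by replacing $\rho_\veps$ with $1$ at a cost $O(\veps)$, or by a Littlewood--Paley truncation whose error tends to $0$ with $M$, uniformly in $\veps$. Write $m_\veps:=\sqrt{\rho_\veps}\,u_\veps$ and $w_\veps:=\rho_\veps^{3/2}u_\veps$, so that $\rho_\veps u_\veps\otimes u_\veps=m_\veps\otimes m_\veps$, $V_\veps=\sqrt{\rho_\veps}\,m_\veps$ and $w_\veps=\rho_\veps\,m_\veps=\sqrt{\rho_\veps}\,V_\veps$. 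The bounds I shall use, all uniform in $\veps$, are: $m_\veps$ bounded in $L^\infty_T(L^2)$ (Corollary~\ref{c:E}); $w_\veps$ bounded in $L^2_T(L^3)$ with $Dw_\veps$ bounded in $L^2_T(L^2+L^{3/2})$ (Paragraph~\ref{sss:bounds} and \eqref{sing-b:D(rho-u)}); and $\|\rho_\veps-1\|_{L^2_T(L^\infty)}\leq C\veps$, whence also $\|\sqrt{\rho_\veps}-1\|_{L^2_T(L^\infty)}\leq C\veps$ since $|\sqrt{a}-1|\leq|a-1|$ (Corollary~\ref{c:F} with $p=2$).

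\emph{The two $O(\veps)$ reductions.} Since $\rho_\veps u_\veps\otimes u_\veps-w_\veps\otimes w_\veps=(1-\rho_\veps^{2})\,m_\veps\otimes m_\veps$, with $1-\rho_\veps^{2}=-2(\rho_\veps-1)-(\rho_\veps-1)^{2}$ of order $O(\veps)$ in $L^2_T(L^\infty)$ and $m_\veps\otimes m_\veps$ bounded in $L^\infty_T(L^1)$, this difference is $O(\veps)$ in $L^1_T\bigl(L^1(K)\bigr)$ for every compact $K\subset\Omega$; paired with the fixed $\nabla\psi\in L^\infty$ it is $o(1)$ as $\veps\to0$, uniformly in $M$. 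Similarly, at the truncated level, $S_Mw_\veps-V_{\veps,M}=S_M\bigl((\sqrt{\rho_\veps}-1)\,V_\veps\bigr)=S_M\Bigl(\bigl((\rho_\veps-1)-(\sqrt{\rho_\veps}-1)\bigr)m_\veps\Bigr)$, and the factor $\bigl((\rho_\veps-1)-(\sqrt{\rho_\veps}-1)\bigr)m_\veps$ is $O(\veps)$ in $L^2_T(L^2)$ — here one really uses $m_\veps\in L^\infty_T(L^2)$, rather than $V_\veps$, to keep the time exponent under control; since $S_M$ is bounded on $L^2$ uniformly in $M$ and $S_Mw_\veps$ is bounded in $L^2_T(L^2_{loc})$ (because $w_\veps\in L^2_T(L^3)$), the bilinear difference $(S_Mw_\veps)^{\otimes2}-V_{\veps,M}^{\otimes2}$ is $O(\veps)$ in $L^1_T\bigl(L^1(K)\bigr)$ for each fixed $M$, hence negligible as $\veps\to0$.

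\emph{The truncation.} This is the only genuinely non-trivial point, and it is the reason I pass through $w_\veps$ rather than $V_\veps$: by Korn's inequality $\nabla w_\veps$ is bounded in $L^2_T(L^2+L^{3/2})$, and since $W^{1,3/2}(\R^3)\hookrightarrow H^{1/2}(\R^3)$ (and $W^{1,2}\hookrightarrow H^1$), the uniform bounds on $w_\veps$ and $\nabla w_\veps$ give $w_\veps$ bounded, uniformly in $\veps$, in $L^2_T\bigl(H^{1/2}(\Omega)\bigr)$ — a regularity gain which is \emph{not} available for $V_\veps=\rho_\veps u_\veps$, whose gradient only sits in $L^2_T(L^1_{loc})$-type spaces. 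Therefore $\sup_{\veps>0}\bigl\|(\Id-S_M)w_\veps\bigr\|_{L^2_T(L^2(\Omega))}\leq C\,2^{-M/2}\to0$ as $M\to\infty$, and, writing $w_\veps\otimes w_\veps-(S_Mw_\veps)\otimes(S_Mw_\veps)=\bigl((\Id-S_M)w_\veps\bigr)\otimes w_\veps+(S_Mw_\veps)\otimes\bigl((\Id-S_M)w_\veps\bigr)$, each term is bounded in $L^1_T\bigl(L^1(K)\bigr)$ by $\bigl\|(\Id-S_M)w_\veps\bigr\|_{L^2_T(L^2(K))}$ times the uniform $L^2_T(L^2(K))$-bound of $w_\veps$ (resp.\ of $S_Mw_\veps$, for which $\|S_Mw_\veps\|_{L^2}\leq C\|w_\veps\|_{L^2}$) times $\|\nabla\psi\|_{L^\infty}$. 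Hence $\limsup_{\veps\to0}\bigl|\int_0^T\!\!\int_\Omega w_\veps^{\otimes2}:\nabla\psi-\int_0^T\!\!\int_\Omega(S_Mw_\veps)^{\otimes2}:\nabla\psi\bigr|\leq C\,2^{-M/2}$. Chaining this with the two $O(\veps)$ reductions through the triangle inequality gives $\limsup_{\veps\to0}\bigl|\int_0^T\!\!\int_\Omega\rho_\veps u_\veps\otimes u_\veps:\nabla\psi-\int_0^T\!\!\int_\Omega V_{\veps,M}^{\otimes2}:\nabla\psi\bigr|\leq C\,2^{-M/2}$, and letting $M\to\infty$ proves the lemma.

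The step I expect to be most delicate is exactly this functional bookkeeping: because a weak solution controls the velocity only in $L^2$ in time, the small gains $1-\rho_\veps^{k}=O(\veps)$ must be spent against $\|\rho_\veps-1\|_{L^2_T(L^\infty(\Omega))}\leq C\veps$ and paired systematically with the $L^\infty_T(L^1)$-bounded quantity $m_\veps\otimes m_\veps$ (never with a factor that is merely $L^2$ in time), so that every remainder genuinely lives in $L^1_T(L^1_{loc})$; the rest is the standard Littlewood--Paley calculus recalled in Chapter~2 of~\cite{B-C-D}.
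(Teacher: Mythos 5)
Your proof is correct and is essentially the route the paper has in mind (the detailed argument being deferred to \cite{F_2016}): the bounds on $\rho_\veps^{3/2}u_\veps$ in $L^2_T\bigl(L^3\bigr)$ and on its gradient in \eqref{sing-b:D(rho-u)} are prepared in Paragraph \ref{sss:bounds} precisely so that the convective term can be compared with its regularization through this intermediate, half-derivative-regular quantity, up to $O(\veps)$ errors coming from $\|\rho_\veps-1\|_{L^2_T(L^\infty)}\leq C\veps$ paired with $\sqrt{\rho_\veps}\,u_\veps\in L^\infty_T\bigl(L^2\bigr)$. Two cosmetic points only: for $(\rho_\veps-1)^2$ the paper gives $L^p_T\bigl(L^\infty\bigr)$ control with $p<4$ strictly, so bound that term as $O(\veps^2)$ in, say, $L^{3/2}_T\bigl(L^\infty\bigr)$ rather than $L^2_T\bigl(L^\infty\bigr)$ (harmless, since it is paired with the $L^\infty_T\bigl(L^1\bigr)$ quantity on a finite time interval), and the global $L^2_T\bigl(H^{1/2}\bigr)$ bound for $w_\veps$ is not quite what the cited bounds give at low frequencies — but only $(\Id-S_M)w_\veps$ matters, and the reverse Bernstein inequality applied blockwise to $Dw_\veps\in L^2_T\bigl(L^2+L^{3/2}\bigr)$ (together with $w_\veps\in L^2_T\bigl(L^2\bigr)$, available from Paragraph \ref{sss:bounds} on $[0,T]$) directly yields the $2^{-M/2}$ decay you use.
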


Next, recall  equation \eqref{eq:sing_weak}: paying attention once again to the right signs, by the previous lemma we have just to pass to
the limit in the term
\begin{eqnarray*}
& & \hspace{-0.5cm} -\,\int^T_0\!\!\int_\Omega V_{\veps,M}\otimes V_{\veps,M}:\nabla\psi\,=\, 
\int^T_0\!\!\int_\Omega \div\left(V_{\veps,M}\otimes V_{\veps,M}\right)\,\cdot\,\psi \\
& &  \qquad\qquad\quad =\,\int^T_0\!\!\int_\Omega \div_h\left(\lan V^h_{\veps,M}\ran\otimes\lan V^h_{\veps,M}\ran\right)\,\cdot\,\psi\,+\,
\int^T_0\!\!\int_\Omega \div_h\left(\lan \wtilde{V}^h_{\veps,M}\otimes\wtilde{V}^h_{\veps,M}\ran\right)\,\cdot\,\psi \\
&  & \qquad\qquad\quad =\,\int^T_0\!\!\int_\Omega\left(\mc{T}^1_{\veps,M}\,+\,\mc{T}^2_{\veps,M}\right)\,\cdot\,\psi\,.
\end{eqnarray*}

For notational convenience, from now on we will generically denote by $\mc{R}_{\veps,M}$ any remainder, i.e. any term satisfying the property
\begin{equation} \label{eq:remainder}
\lim_{M\ra+\infty}\,\limsup_{\veps\ra0}\,\left|\int^T_0\int_\Omega \mc{R}_{\veps,M}\,\cdot\,\psi\,dx\,dt\right|\,=\,0
\end{equation}
for all test functions $\psi\,\in\,\mc{D}\bigl([0,T[\,\times\Omega;\R^3\bigr)\,\cap\,{\rm  Ker}\,{\mc{A}}$.

We give a sketch of how dealing with the terms $\mc{T}^1_{\veps,M}$ and $\mc{T}^2_{\veps,M}$, referring to Subsection 4.3.3 of \cite{F_2016}
for the details.

\paragraph{\bf Handling $\mc{T}^1_{\veps,M}$}

Since we are dealing with  smooth functions, we can integrate by parts: we get
\begin{eqnarray*}
\mc{T}^1_{\veps,M} & = & \div_{\!h}\!\left(\lan V^h_{\veps,M}\ran\otimes\lan V^h_{\veps,M}\ran\right)\;=\;
\div_{\!h}\!\bigl(\lan V^h_{\veps,M}\ran\bigr)\;\lan V^h_{\veps,M}\ran\,+\,
\lan V^h_{\veps,M}\ran\cdot\nabla_h\left(\lan V^h_{\veps,M}\ran\right) \\
& = & \div_{\!h}\bigl(\lan V^h_{\veps,M}\ran\bigr)\;\lan V^h_{\veps,M}\ran\,+\,
\dfrac{1}{2}\,\nabla_h\left(\left|\lan V^h_{\veps,M}\ran\right|^2\right)\,+\,
{\rm curl}_h\lan V^h_{\veps,M}\ran\;\lan V^h_{\veps,M}\ran^\perp\,.
\end{eqnarray*}
Notice that the second term is a perfect gradient, and then it vanishes when tested against a function in the kernel of the singular perturbation operator.

For the first term, we take advantage of system \eqref{reg:approx-w}: averaging the first equation with respect to $x^3$
and multiplying it by $\lan V^h_{\veps,M}\ran$, we arrive at
$$
\div_h\bigl(\lan V^h_{\veps,M}\ran\bigr)\;\lan V^h_{\veps,M}\ran\;=\;-\,\veps\,\d_t\lan r_{\veps,M}\ran\,\lan V^h_{\veps,M}\ran\;=\;
\mc{R}_{\veps,M}\,+\,\veps\,\lan r_{\veps,M}\ran\,\d_t\lan V^h_{\veps,M}\ran\,,
$$
since $\veps\,\d_t\bigl(\lan r_{\veps,M}\ran \,\lan V^h_{\veps,M}\ran\bigr)$ is a remainder in the sense specified by relation
\eqref{eq:remainder}. We use now the horizontal part of \eqref{reg:approx-w} (again, after taking the vertical average),
multiplied by $\lan r_{\veps,M}\ran$: paying attention to  the signs, we get
$$
\veps\,\lan r_{\veps,M}\ran\,\d_t\lan V^h_{\veps,M}\ran 
\,=\,-\,\mf{c}(x^h)\,\lan r_{\veps,M}\ran\,\lan V^h_{\veps,M}\ran^\perp\,+\,
\bigl(\Id-\Delta_h\bigr)\lan r_{\veps_M}\ran\,\nabla_h\lan r_{\veps_M}\ran\,+\,\mc{R}_{\veps,M}\,,
$$
where we used also the properties proved in Proposition \ref{p:regular} and we included in the remainder term also the perfect gradient.
Inserting this relation into the expression for $\mc{T}^1_{\veps,M}$, we find that this term equals
\begin{equation} \label{eq:T^1_a}
\mc{X}_{\veps,M}\,\lan V^h_{\veps,M}\ran^\perp
\,+\,\bigl(\Id-\Delta_h\bigr)\lan r_{\veps_M}\ran\,\nabla_h\lan r_{\veps_M}\ran\,+\,\mc{R}_{\veps,M}\,,
\end{equation}
where, for notational convenience, we set $\mc{X}_{\veps,M}\,:=\,{\rm curl}_h\lan V^h_{\veps,M}\ran\,-\,\mf{c}(x^h)\,\lan r_{\veps,M}\ran$.

In order to deal with the first term in the right-hand side, the idea is to decompose $V^h_{\veps,M}$ in the orthonormal basis
(up to normalization) $\left\{\nabla_h\mf{c}\,,\,\nabla^\perp_h\mf{c}\right\}$. Of course, this can be done in the region when
$\nabla_h\mf{c}$ is far from $0$: therefore, we proceed carefully.
First of all, we notice that, after some manipulations, we can write
\begin{equation} \label{eq:curl-cr}
\veps\,\d_t\mc{X}_{\veps,M}\,=\veps\,{\rm curl}_h\lan f^h_{\veps,M}\ran\,+\,{\rm curl}_h\lan g^h_{\veps,M}\ran\,+\,\lan V^h_{\veps,M}\ran\,\cdot\,\nabla_h\mf{c}(x^h)\,.
\end{equation}
Notice that, thanks to Proposition \ref{p:regular}, there exists a function $\eta\geq0$, with $\eta(M)\longrightarrow0$ for $M\ra+\infty$,
such that, for any compact $K\,\subset\,\Omega$,
\begin{equation} \label{est:curl_rem}
\sup_{\veps>0}\left\|{\rm curl}_h\lan g^h_{\veps,M}\ran\right\|_{L^2_T\bigl(L^2(K)\bigr)}
\,\leq\,\eta(M)\,.
\end{equation}
Then, fixed a $b\in\mc{C}^{\infty}_0(\R^2)$, with $0\leq b(x^h)\leq1$, such that $b\equiv1$ on $\left\{|x^h|\leq1\right\}$ and
$b\equiv0$ on $\left\{|x^h|\geq2\right\}$, we define
$$
b_M(x^h)\,:=\,b\left(\bigl(\eta(M)\bigr)^{\!-1/2}\,\nabla_h\mf{c}(x^h)\right)\,.
$$

Now we are ready to deal with the first term in the right-hand side of \eqref{eq:T^1_a}.
On the one hand, using the decomposition
and the bounds established in Proposition \ref{p:decomp}, we deduce that, for any compact $K\subset\Omega$,
\begin{eqnarray*}
\left\|b_M\,\mc{X}_{\veps,M}\,\lan V^h_{\veps,M}\ran^\perp\right\|_{L^1([0,T]\times K)} & \leq & 
\veps\,C(M)\,+\,C\,\left\|b_M\right\|_{L^6(K)} \\
& \leq &  \veps\,C(M)\,+\,C\,\left(\mc{L}\left\{x^h\in\R^2\;\bigl|\;\left|\nabla_h\mf{c}(x^h)\right|\,\leq\,2\,\sqrt{\eta(M)}\right\}\right)^{\!1/6}\,.
\end{eqnarray*}
Therefore, thanks to hypothesis \eqref{eq:non-crit}, we infer that this term is a remainder, in the sense specified by
relation \eqref{eq:remainder}.
On the other hand, for $\nabla_h\mf{c}$ far from $0$, we can write
$$
\left(1-b_M\right)\mc{X}_{\veps,M}\lan V^h_{\veps,M}\ran^\perp
=\left(1-b_M\right)\mc{X}_{\veps,M}
\left(\frac{\lan V^h_{\veps,M}\ran\cdot\nabla_h\mf{c}}{\left|\nabla_h\mf{c}\right|^2}\nabla_h^\perp\mf{c}+
\frac{\lan V^h_{\veps,M}\ran^\perp\cdot\nabla_h\mf{c}}{\left|\nabla_h\mf{c}\right|^2}\nabla_h\mf{c}\right).
$$
We observe that the latter term in the right-hand side is identically $0$ when tested against a $\psi\in{\rm Ker}\,\mc{A}$.
For the former term, instead, we use the expression found in \eqref{eq:curl-cr}: after some manipulations we get
\begin{eqnarray*}
\left(1-b_M\right)\,\mc{X}_{\veps,M}\frac{\lan V^h_{\veps,M}\ran\cdot\nabla_h\mf{c}}{\left|\nabla_h\mf{c}\right|^2}\,
\nabla_h^\perp\mf{c} & = & \frac{\veps\,\left(1-b_M\right)\,\d_t\left|\mc{X}_{\veps,M}\right|^2}{2\,\left|\nabla_h\mf{c}\right|^2}\,\nabla_h^\perp\mf{c}\,- \\
& & -\,\frac{\left(1-b_M\right)\,\mc{X}_{\veps,M}}{\left|\nabla_h\mf{c}\right|^2}
\left(\veps\,{\rm curl}_h\lan f^h_{\veps,M}\ran\,+\,{\rm curl}_h\lan g^h_{\veps,M}\ran\right)\,\nabla_h^\perp\mf{c}\,,
\end{eqnarray*}
which is again a remainder $\mc{R}_{\veps,M}$, thanks to Proposition \ref{p:decomp} and property \eqref{est:curl_rem}.

In the end, putting all these facts together, we have proved that (paying attention again to the right signs)
\begin{equation} \label{eq:T^1_final}
\mc{T}^1_{\veps,M}\,=\,\bigl(\Id-\Delta_h\bigr)\lan r_{\veps_M}\ran\,\nabla_h\lan r_{\veps_M}\ran\,+\,\mc{R}_{\veps,M}\,.
\end{equation}

\paragraph{\bf Dealing with $\mc{T}^2_{\veps,M}$} 
Let us now consider the term $\mc{T}^2_{\veps,M}$: exactly as done above, we can write
$$
\mc{T}^2_{\veps,M}\,=\,\lan \div_{\!h}\bigl(\wtilde{V}^h_{\veps,M}\bigr)\;\wtilde{V}^h_{\veps,M}\ran\,+\,
\dfrac{1}{2}\,\lan\nabla_h\left|\wtilde{V}^h_{\veps,M}\right|^2\ran\,+\,
\lan {\rm curl}_h\wtilde{V}^h_{\veps,M}\;\left(\wtilde{V}^h_{\veps,M}\right)^\perp\ran\,.
$$

Let us focus on the last term for a while: with the notations introduced in \eqref{dec:vert-av}, we have
$$
\left({\rm curl}\wtilde{V}_{\veps,M}\right)^h\,=\,\d_3\wtilde{W}^h_{\veps,M}\qquad\mbox{ and }\qquad
\left({\rm curl}\wtilde{V}_{\veps,M}\right)^3\,=\,{\rm curl}_h\wtilde{V}^h_{\veps,M}\,=\,\wtilde{\omega}^3_{\veps,M}\,,
$$
where we have defined $\wtilde{W}^h_{\veps,M}\,:=\,\left(\wtilde{V}^h_{\veps,M}\right)^\perp\,-\,
\d_3^{-1}\nabla^\perp_h\wtilde{V}^3_{\veps,M}$.
For these quantities, from \eqref{reg:approx-w}, taking the mean-free part and the ${\rm curl}$ we deduce
\begin{equation} \label{eq:mean-free}
\begin{cases}
\veps\,\d_t\wtilde{W}^h_{\veps,M}\,-\,\mf{c}\,\wtilde{V}^h_{\veps,M}\,=\,\left(\d_3^{-1}\,
{\rm curl}\left(\veps\,\wtilde{f}_{\veps,M}\,+\,\wtilde{g}_{\veps,M}\right)\right)^h \\[1ex]
\veps\,\d_t\wtilde{\omega}^3_{\veps,M}\,+\,\div_{\!h}\bigl(\mf{c}\,\wtilde{V}^h_{\veps,M}\bigr)\,=\,{\rm curl}_h\left(\veps\,\wtilde{f}^h_{\veps,M}\,+\,
\wtilde{g}^h_{\veps,M}\right)
\end{cases}
\end{equation}
Making use of the relations above and of Propositions \ref{p:regular} and \ref{p:decomp}, we get
\begin{eqnarray*}
{\rm curl}_h\wtilde{V}^h_{\veps,M}\left(\wtilde{V}^h_{\veps,M}\right)^\perp & = &
\frac{\veps}{\mf c}\d_t\left(\wtilde{W}^h_{\veps,M}\right)^\perp\wtilde{\omega}^3_{\veps,M}-\frac{\wtilde{\omega}^3_{\veps,M}}{\mf c}
\left(\d_3^{-1}{\rm curl}\left(\veps\wtilde{f}_{\veps,M}+\wtilde{g}_{\veps,M}\right)\right)^{h,\perp} \\
& = & \frac{1}{\mf c}\,\left(\wtilde{W}^h_{\veps,M}\right)^\perp\,\div_{\!h}\bigl(\mf{c}\,\wtilde{V}^h_{\veps,M}\bigr)\,+\,\mc{R}_{\veps,M}\,.
\end{eqnarray*}

Hence, including also the gradient term into the remainders and making some esy manipulations, we arrive at the equality
\begin{eqnarray*}
\mc{T}^2_{\veps,M} & = & \lan \div\wtilde{V}_{\veps,M}\,\left(\wtilde{V}^h_{\veps,M}\,+\,\left(\wtilde{W}^h_{\veps,M}\right)^\perp\right)\ran\,- \\
& & \qquad -\,\lan \d_3\wtilde{V}^3_{\veps,M}\,\left(\wtilde{V}^h_{\veps,M}\,+\,\left(\wtilde{W}^h_{\veps,M}\right)^\perp\right)\ran\,+\,
\lan \frac{1}{\mf c}\,\left(\wtilde{W}^h_{\veps,M}\right)^\perp\,\wtilde{V}^h_{\veps,M}\cdot\nabla_h\mf{c} \ran\,+\,\mc{R}_{\veps,M}\,.
\end{eqnarray*}
The second term on the right-hand side is actually another remainder.
As for the first term, instead, we use the equation for the density in \eqref{reg:approx-w} to obtain
$$ 
\div\wtilde{V}_{\veps,M}\left(\wtilde{V}^h_{\veps,M}+\left(\wtilde{W}^h_{\veps,M}\right)^\perp\right)\,=\, 
\mc{R}_{\veps,M}\,+\,\veps\,\wtilde{r}_{\veps,M}\;\d_t\!\left(\wtilde{V}^h_{\veps,M}+\left(\wtilde{W}^h_{\veps,M}\right)^\perp\right)\,.
$$ 
Now, by equations \eqref{eq:mean-free} and \eqref{reg:approx-w} again, it is easy to see that
$$
\veps\,\wtilde{r}_{\veps,M}\;\d_t\!\left(\wtilde{V}^h_{\veps,M}+\left(\wtilde{W}^h_{\veps,M}\right)^\perp\right)\,=\,
\mc{R}_{\veps,M}\,+\,\nabla\wtilde{r}_{\veps,M}\,\bigl(\Id-\Delta\bigr)\wtilde{r}_{\veps,M}\,,
$$
and therefore we find (with attention to the right sign)
$$
\mc{T}^2_{\veps,M}\,=\,\lan \nabla\wtilde{r}_{\veps,M}\,\bigl(\Id-\Delta\bigr)\wtilde{r}_{\veps,M}\ran\,+\,
\lan \frac{1}{\mf c}\,\left(\wtilde{W}^h_{\veps,M}\right)^\perp\,\wtilde{V}^h_{\veps,M}\cdot\nabla_h\mf{c} \ran\,+\,\mc{R}_{\veps,M}\,. 
$$

Now we have to deal with the second term in this last identity. Once again, we take advantage of the decomposition along the basis $\left\{\nabla_h\mf{c}\,,\,\nabla^\perp_h\mf{c}\right\}$.
For simplicity of exposition, we omit the cut-off away from the region $\{\nabla_h\mf c=0\}$ and we just give a sketch of the argument, since it
is analogous to what done above for $\mc{T}^1_{\veps,M}$.

First of all, we write
$$
\frac{\wtilde{V}^h_{\veps,M}\cdot\nabla_h\mf{c}}{\mf c}\,\left(\wtilde{W}^h_{\veps,M}\right)^\perp\,=\,\frac{\wtilde{V}^h_{\veps,M}\cdot\nabla_h\mf{c}}{\mf c}
\left(\wtilde{W}^h_{\veps,M}\cdot\nabla_h\mf{c}\,\frac{\nabla_h^\perp\mf{c}}{\left|\nabla_h\mf{c}\right|^2}\,+\,\left(\wtilde{W}^h_{\veps,M}\right)^\perp\cdot\nabla_h\mf{c}\,
\frac{\nabla_h\mf{c}}{\left|\nabla_h\mf{c}\right|^2}\right).
$$
As before, the last term in the right-hand side vanishes when tested against a smooth $\psi\in{\rm Ker}\,{\mc A}$.
Next, from the first equation in \eqref{eq:mean-free} we get
$$
\wtilde{V}^h_{\veps,M}\cdot\nabla_h\mf{c}\,=\,\frac{1}{\mf c}\left(\veps\,\d_t\wtilde{W}^h_{\veps,M}\,-\,\left(\d_3^{-1}\,
{\rm curl}\left(\veps\,\wtilde{f}_{\veps,M}\,+\,\wtilde{g}_{\veps,M}\right)\right)^h\right)\cdot\nabla_h\mf{c}\,.
$$
Therefore, we obtain that
\begin{eqnarray*}
\frac{1}{\mf c}\,\left(\wtilde{W}^h_{\veps,M}\right)^\perp\,\wtilde{V}^h_{\veps,M}\cdot\nabla_h\mf{c} & = & \frac{\veps}{2\,\mf c^2}\,
\d_t\left|\wtilde{W}^h_{\veps,M}\cdot\nabla_h\mf{c}\right|^2\,\frac{\nabla_h^\perp\mf{c}}{\left|\nabla_h\mf{c}\right|^2}\,- \\
& & \quad-\,\frac{1}{\mf c^2}\,\left(\d_3^{-1}\,{\rm curl}\left(\veps\,\wtilde{f}_{\veps,M}\,+\,\wtilde{g}_{\veps,M}\right)\right)^h\cdot\nabla_h\mf{c}\;
\frac{\nabla_h^\perp\mf{c}}{\left|\nabla_h\mf{c}\right|^2}\,,
\end{eqnarray*}
which is obviously a remainder in the sense of relation \eqref{eq:remainder}.

In the end, we have discovered that (paying attention to the right sign)
\begin{equation} \label{eq:T^2_final}
\mc{T}^2_{\veps,M}\,=\,\lan \nabla\wtilde{r}_{\veps,M}\,\bigl(\Id-\Delta\bigr)\wtilde{r}_{\veps,M}\ran\,+\,\mc{R}_{\veps,M}\,. 
\end{equation}

\subsection{The limit equation}

Let us sum up what we have just proved. In order to pass to the limit in equation \eqref{eq:sing_weak}, we needed to treat the non-linearities coming
from the capillarity term and the convection term.

Putting relations \eqref{comp-cpt:dens}, \eqref{eq:T^1_final} and \eqref{eq:T^2_final} all together, we finally discover that
$$
\int^T_0\!\!\int_\Omega\biggl(-\,\bigl(\Id-\Delta\bigr)r_{\veps,M}\;\nabla r_{\veps,M}\cdot\psi\,-\,V_{\veps,M}\otimes V_{\veps,M}:\nabla\psi\biggr)\,=\,
\int^T_0\!\!\int_\Omega\mc{R}_{\veps,M}\cdot\psi\,,
$$
which immediately implies, together with Lemmas \ref{l:capill_approx} and \ref{l:conv_approx}, that
$$
\lim_{M\ra+\infty}\,\lim_{\veps\ra0}
\int^T_0\!\!\int_\Omega\biggl(\frac{1}{\veps^2}\,\Delta\rho_\veps\,\nabla\rho_\veps\cdot\psi\,-\,\rho_\veps\,u_\veps\otimes u_\veps:\nabla\psi\biggr)dx\,dt\,=\,0\,.
$$

Then, thanks to the previous computations, we can pass to the limit in the weak formulation of our system: we obtain
$$ 
\int^T_0\!\!\int_\Omega\biggl(-u\cdot\d_t\psi-r\d_t\bigl(\Id-\Delta_h\bigr)\phi+\nu Du:\nabla\psi\biggr)dxdt\,=\,
\int_\Omega\bigl(u_0\cdot\psi(0)+r_0\bigl(\Id-\Delta_h\bigr)\phi(0)\bigr)dx
$$ 
for any $(\phi,\psi)$ test functions belonging to the kernel of the singular perturbation operator ${\mc{A}}$.
Recall that, in particular, this implies the relation $\mf{c}\,\psi^h\,=\,\nabla_h^\perp\bigl(\Id-\Delta_h\bigr)\phi$.
Furthermore, also $(r,u)\,\in\,{\rm Ker}\,{\mc{A}}$: then we have the properties $\div u\equiv0$, $u=\bigl(u^h,0\bigr)$ and
$\mf{c}\,u^h\,=\,\nabla_h^\perp\bigl(\Id-\Delta_h\bigr)r$.

Setting $X(r)\,=\,\bigl(\Id-\Delta_h\bigr)r$ and $\wtilde{\phi}\,=\,\bigl(\Id-\Delta_h\bigr)\phi$, and
using that all the functions depend just on the horizontal variables, straightforward computations yield to
\begin{eqnarray*}
-\int^T_0\int_\Omega u\cdot\d_t\psi\,dx\,dt & = &
\int^T_0\int_{\R^2}\div_{\!h}\!\left(\frac{1}{\mf{c}^2}\,\nabla_hX(r)\right)\,\d_t\wtilde{\phi}\,dx^h\,dt \\
\nu\int^T_0\int_\Omega Du:\nabla\psi\,dx\,dt & = & 
\nu\int^T_0\int_{\R^2}\mf{D}_{\mf{c}}\bigl(X(r)\bigr)\,:\,\nabla_h\bigl(\mf{c}^{-1}\,\nabla^\perp_h\wtilde{\phi}\,\bigr)\,dx^h\,dt \\ 
 & = & \nu\int^T_0\int_{\R^2}\,^t\mf{D}_{\mf{c}}\,\circ\,\mf{D}_{\mf{c}}\bigl(X(r)\bigr)\,\wtilde{\phi}\,dx^h\,dt\,.
\end{eqnarray*}

Inserting these equalities into the previous relation finally completes the proof of Theorem \ref{t:sing_var}.

{\small

}

\end{document}